\providecommand\@dotsep{5}
\def\listtodoname{List of Todos}
\def\listoftodos{\@starttoc{tdo}\listtodoname}
\numberwithin{equation}{section}
\newtheorem{theorem}{Theorem}[section]
\newtheorem{proposition}[theorem]{Proposition}
\newtheorem{lemma}[theorem]{Lemma}
\newtheorem{corollary}[theorem]{Corollary}
\newcommand\R{\mathbb R}
\newcommand\N{\mathbb N}
\begin{document}

\title[exterior domains with nonlocal Neumann boudary condition]
{Fractional elliptic problem in exterior domains with nonlocal Neumann condition}

\author{Claudianor O. Alves}
\author{C\'esar E. Torres Ledesma}

\address[Claudianor O. Alves]{\newline\indent Unidade Acad\^emica de Matem\'atica
\newline\indent 
Universidade Federal de Campina Grande,
\newline\indent
58429-970, Campina Grande - PB - Brazil}
\email{\href{mailto:coalves@dme.ufcg.edu.br}{coalves@dme.ufcg.edu.br}}

\address[C\'esar E. Torres Ledesma]
{\newline\indent Departamento de Matem\'aticas
\newline\indent 
Universidad Nacional de Trujillo
\newline\indent
Av. Juan Pablo II s/n. Trujillo-Per\'u}
\email{\href{ctl\_576@yahoo.es}{ctl\_576@yahoo.es}}

\pretolerance10000


\begin{abstract}
\noindent In this paper we consider the existence of solution for the following class of fractional elliptic problem 
\begin{equation}\label{00}
\left\{\begin{aligned}
(-\Delta)^su + u &= Q(x) |u|^{p-1}u\;\;\mbox{in}\;\;\R^N \setminus \Omega\\
\mathcal{N}_su(x) &= 0\;\;\mbox{in}\;\;{\Omega},
\end{aligned}
\right.
\end{equation} 
where $s\in (0,1)$, $N> 2s$, $\Omega\subset \R^N$ is a bounded set with smooth boundary, $(-\Delta)^s$ denotes the fractional Laplacian operator and $\mathcal{N}_s$ is the nonlocal operator that describes the Neumann boundary condition, which is given by 
$$
\mathcal{N}_su(x) = C_{N,s} \int_{\R^N \setminus \Omega} \frac{u(x) - u(y)}{|x-y|^{N+2s}}dy,\;\;x\in {\Omega}. 
$$    

\end{abstract}

\thanks{ C. O. Alves was partially 	supported by  CNPq/Brazil 304804/2017-7  and C.E. Torres Ledesma was partially supported by INC Matem\'atica  88887.136371/2017.}

\subjclass[2010]{Primary 35A15; Secondary 35J60, 35C15.} 
\keywords{Variational Methods, Nonlinear elliptic equations, Integral representations of solutions}

\maketitle

\section{Introduction}

In this paper, we consider the existence of weak solution for the following class of fractional elliptic problem with nonlocal Neumann conditions: 
$$
\left\{
\begin{aligned}
(-\Delta)^su + u &= Q(x)|u|^{p-1}u\;\;\mbox{in}\;\;\R^N \setminus \Omega\\
\mathcal{N}_su(x) &= 0\;\;\mbox{in}\;\;\Omega,
\end{aligned}
\right.
\eqno{(P)}
$$
where $s\in (0,1)$, $N> 2s$, $p\in (1, \frac{N+2s}{N-2s})$, $\Omega$ is a smooth bounded domain of $\R^N$, and $(-\Delta)^s$ denotes the fractional Laplacian operator defined as,
\begin{equation}\label{I01}
(-\Delta)^su(x) = C_{N,s} P.V.\int_{\R^N}\frac{u(x)-u(y)}{|x-y|^{N+2s}}dy.
\end{equation}
By $\mathcal{N}_s$ we denote the nonlocal normal derivative, defined as   
\begin{equation}\label{I02}
\mathcal{N}_su(x) = C_{N,s} \int_{\R^N \setminus \Omega} \frac{u(x) - u(y)}{|x-y|^{N+2s}}dy,\;\;x\in {\Omega}.
\end{equation}
This function was introduced by Dipierro et al. \cite{SDXREV}, where the authors proved that, when $s\to 1^-$, the classical Neumann boundary condition $\frac{\partial u}{\partial \eta}$ is recovered in some sense. Hereafter, without loss of generality we will assume that $C_{N,s}=1$.

Hereafter, $Q$ is a continuous function such that 
\begin{enumerate}
	\item[$(Q_1)$] $Q(x)\geq \tilde{Q} >0$ in $\R^N \setminus \Omega$ and 
	$$\lim_{|x|\to +\infty} Q(x) = \tilde{Q}.$$
\end{enumerate}

When $s=1$, the problem $(P)$ reduces to the elliptic problem 
\begin{equation}\label{I03}
\left\{
\begin{aligned}
-\Delta u + u &= Q(x)|u|^{p-1}u,\;\;\mbox{in}\;\;\R^N \setminus \Omega\\
\frac{\partial u}{\partial \eta} &= 0\;\;\mbox{on}\;\;\partial \Omega.
\end{aligned}
\right.
\end{equation}

In \cite{VBGC}, Benci and Cerami showed that (\ref{I03}), with $Q\equiv 1$ and Dirichlet condition, does not have a ground state solution, that is, a solution of (\ref{I03}) with minima energy. However, Esteban in \cite{ME} proved that the same problem with Neumann condition has a ground state solution. Furthermore, in \cite{DC}, Cao studied the existence of positive solution for problem (\ref{I03}) by supposing that 
\begin{enumerate}
\item[$(Q'_1)$] $Q(x)\geq \tilde{Q}-Ce^{-\nu|x|}|x|^{-m}$ \quad as \quad $|x| \to +\infty$ and $\displaystyle \lim_{|x|\to +\infty}Q(x) = \tilde{Q}>0$,
\end{enumerate}
where $\nu=\frac{2(p+1)}{p-1}$, $m >N-1$ and $C>0$. In the same paper, Cao also studied the existence of solution that changes sign ( nodal solution ), by assuming the following additional condition on $Q$
\begin{enumerate}
	\item[$(Q'_2)$] $Q(x)\geq \tilde{Q}+Ce^{-\frac{p|x|}{p+1}}|x|^{-m}$ \quad as \quad $|x| \to +\infty$ and $\displaystyle \lim_{|x|\to +\infty}Q(x) = \tilde{Q}>0$.
\end{enumerate}
with $0<m < \frac{N-1}{2}$. In \cite{CAPCEM}, Alves et al. showed that the results found in \cite{DC} also hold for the $p$-Laplacian operator and for a larger class of nonlinearity. We also mention the work by Alves \cite{CA}, where problem (\ref{I03}) was considered with critical growth nonlinearity for $N=2$.  It is a very important point out that in all the above mentioned papers the fact that the limit problem in whole $\mathbb{R}^N$ has a ground state solution with exponential decaying is a key point in their arguments, because this type of behavior at infinite works well with conditions $(Q'_1)$ and $(Q'_2)$.

Recently, the case $s \in (0,1)$ has received a special attention, because involves the fractional Laplacian operator $(-\Delta)^{s}$, which arises in a quite natural way in many different contexts, such as, among the others, the thin obstacle problem, optimization, finance, phase transitions, stratified materials, anomalous diffusion, crystal dislocation, soft thin films, semipermeable membranes, flame propagation, conservation laws, ultra-relativistic limits of quantum mechanics, quasi-geostrophic flows, multiple scattering, minimal surfaces, materials science and water waves, for more detail see \cite{Bucurb, EDNGPEV, Dipierrob, Molicab, CP}.

In the last 20 years, there has been a lot of interest in the study of the existence and multiplicity of nodal solutions for nonlinear elliptic problems. There are some powerful methods which have been developed, such as the descended flow methods \cite{Liu01}, constrained minimization methods \cite{Bartsch05}, super and sub solution combining with truncation techniques \cite{Dancer95} and so on. Recently, the existence and multiplicity of nodal solutions for the fractional elliptic problem 
\begin{equation}\label{I04}
\left\{
\begin{aligned}
(-\Delta)^su &= f(x,u)\;\;\mbox{in}\;\;\Omega,\\
u&=0\;\;\mbox{in}\;\;\R^N \setminus \Omega,
\end{aligned}
\right.
\end{equation}
where $s\in (0,1)$ and $\Omega\subset \R^N$ is a smooth bounded domain, has been investigated by Chang and Wang \cite{Chang14}, by using the descended flow methods and harmonic extension techniques. Teng et al. \cite{KTKWRW} have prove the existence of nodal solutions for problem (\ref{I04}) by using the constrained minimization methods and adapting some arguments found in \cite{AlvesSouto}. We note that the main difficulties in the study of problem (\ref{I04}) is the presence of the fractional Laplacian $(-\Delta)^s$ that is a nonlocal operator. Indeed, the Euler-Lagrange functional associated to the problem (\ref{I04}), that is
$$
J(u) = \frac{1}{2}\iint_{\R^{2N}\setminus (\Omega^c\times \Omega^c)}\frac{|u(x) - u(y)|^2}{|x-y|^{N+2s}}dy dx - \int_{\Omega}F(x,u(x))dx 
$$
does not satisfy the decompositions 
$$
\begin{aligned}
&J(u) = J(u^+) + J(u^-)\\
&J'(u)u^{\pm} = J'(u^{\pm})u^{\pm},
\end{aligned}
$$
which were fundamental in the application of variational methods to study (\ref{I04}); see \cite{BartschW05}. We also mention a recent work by Ambrosio and Isernia \cite{VATI}, where the fractional Schr\"odinger equation with vanishing potentials 
\begin{equation}\label{I05} 
(-\Delta)^s u + V(x) u = K(x)f(u)\;\;\mbox{in}\;\;\R^N
\end{equation}
was studied. By using a minimization argument and a quantitative deformation Lemma, the authors proved the existence of nodal solutions for (\ref{I05}).

On the other hand, research has been done in recent years for the fractional elliptic problem with nonlocal Neumann condition. We mention the work by Dipierro et al. \cite{SDXREV}, where they established a complete description of the eigenvalues of $(-\Delta)^s$ with zero nonlocal Neumann boundary condition, an existence and uniqueness result for the elliptic problem and the main properties of the fractional heat equation with this type of boundary condition. Chen \cite{GCh}, has considered the fractional Schr\"odinger equation
\begin{equation}\label{I06}
\left\{
\begin{aligned}
\epsilon^{2s}(-\Delta)^su + u &= |u|^{p-1}u\;\;\mbox{in}\;\;\Omega,\\
\mathcal{N}_su &= 0\;\;\mbox{on}\;\;\R^N \setminus \overline{\Omega},
\end{aligned}
\right.
\end{equation}
where $\epsilon >0$, $s\in (0,1)$, $\Omega\subset \R^N$ be a smooth bounded domain, $p\in (1, \frac{N+2s}{N-2s})$ and 
$$
\mathcal{N}_su(x) = \int_{\Omega}\frac{u(x) - u(y)}{|x-y|^{N+2s}}dy,\;\;x\in \R^N \setminus \overline{\Omega}.
$$
By using mountain pass theorem, he showed that there exists a non-negative solution $u_\epsilon$ to (\ref{I06}). For further results with mixed boundary condition see  \cite{Barrios17} and \cite{Leonori17}.

Motivated by the previous works, and by the fact that after a bibliography review we did not find in the literature any paper dealing with $(P)$ in exterior domains and Neumann boundary condition, the present paper concerns with the existence of two nontrivial solutions for problem $(P)$, the first solution is a non-negative ground state solution while the second one is a nodal solution. However, different of  the local case $s=1$, we do not know if the  ground state solution of limit problem in whole $\mathbb{R}^N$ has an exponential decaying, which brings a lot of difficulties for the nonlocal case. The reader is invited to see that for the existence of nodal solution,  we overcome this difficulty by assuming more a condition on the function $Q$, see Theorem \ref{nodal} below. Moreover, we prove a Lions type theorem for exterior domain that is crucial in our approach, see Proposition \ref{GC-C} in Section 3.  The main results of this paper, in some sense, complete the study made in \cite{DC}, because we are considering a version of that paper for the fractional Laplacian. The reader is invited to see that we were not able to work with conditions like $(Q'_1)$ and $(Q'_2)$, because in our case we do not know if the ground state solution of the limit problem has an exponential decay at infinite. Finally, we would like point out that in \cite{CACT}, Alves et al. have studied problem $(P)$ in an exterior domain with Dirichlet boundary conditions and proved that the problem does not have ground state solution.

Concerning the existence of a non-negative ground state solution, we has the following result.
\begin{theorem}\label{ground}
Suppose that $p\in (1, \frac{N+2s}{N-2s})$ and $(Q_1)$ holds. Then $(P)$ has a ground state solution.
\end{theorem}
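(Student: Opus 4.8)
The plan is to use the standard variational framework for this nonlocal Neumann problem, following the concentration--compactness strategy developed for exterior-domain problems in the local case (Esteban \cite{ME}, Cao \cite{DC}). Let $\mathbb{X}$ be the natural energy space of functions on $\R^N$ with $\iint_{\R^{2N}\setminus(\Omega\times\Omega)}\frac{|u(x)-u(y)|^2}{|x-y|^{N+2s}}\,dy\,dx+\int_{\R^N\setminus\Omega}u^2<\infty$, and let $I$ be the associated functional
$$
I(u)=\frac12\iint_{\R^{2N}\setminus(\Omega\times\Omega)}\frac{|u(x)-u(y)|^2}{|x-y|^{N+2s}}\,dy\,dx+\frac12\int_{\R^N\setminus\Omega}u^2\,dx-\frac1{p+1}\int_{\R^N\setminus\Omega}Q(x)|u|^{p+1}\,dx,
$$
whose critical points are precisely the weak solutions of $(P)$. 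One checks that $I$ has the mountain pass geometry, so there is a Palais--Smale sequence at the mountain pass level $c$, which coincides with the ground state level $c=\inf_{\mathcal N}I$ over the Nehari manifold $\mathcal N$ because $p>1$ makes the fibering maps well behaved. Working with $|u|$ in place of $u$ (which does not increase the Gagliardo seminorm) one may assume the minimizing/PS sequence is non-negative.

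The core of the argument is to recover compactness. Let $(u_n)\subset\mathbb{X}$ be a non-negative PS sequence at level $c$; it is bounded, so up to a subsequence $u_n\rightharpoonup u$ in $\mathbb{X}$, and $u$ is a (possibly trivial) critical point of $I$. First I would rule out vanishing: if $\sup_{y}\int_{B_R(y)\setminus\Omega}u_n^2\to 0$, then by the Lions-type result for exterior domains (Proposition \ref{GC-C}) we get $u_n\to 0$ in $L^{p+1}(\R^N\setminus\Omega)$, forcing $c=0$, a contradiction with $c>0$. Hence there are $y_n$ with $\liminf\int_{B_R(y_n)\setminus\Omega}u_n^2>0$. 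If $(y_n)$ is bounded, then the weak limit $u$ is nontrivial, and a Brezis--Lieb / Fatou argument on the Nehari constraint gives $I(u)\le c$, so $u$ is a non-negative ground state and we are done. The delicate case is $|y_n|\to\infty$: then the translated sequences $u_n(\cdot+y_n)$ converge to a nontrivial solution $w$ of the limit problem $(-\Delta)^s w+w=\tilde Q|w|^{p-1}w$ in $\R^N$, whose ground state energy is the level $c_\infty$ associated with the constant $\tilde Q$.

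The main obstacle — and the place where hypothesis $(Q_1)$ enters decisively — is the strict inequality $c<c_\infty$. Because $Q\ge\tilde Q$ with $Q\to\tilde Q$ at infinity, one always has $c\le c_\infty$; the point is strictness. I would argue by contradiction: if $c=c_\infty$, then the splitting/translation analysis above would show that all the energy of the PS sequence escapes to infinity, i.e. $u_n(\cdot+y_n)\to w$ with $I_\infty(w)=c_\infty=c$, while $u\equiv 0$. To derive a contradiction I would test the functional along the family $w(\cdot - z)$ for $|z|$ large, using $Q(x)\ge\tilde Q$ to compare energies and, crucially, the fact (again from $(Q_1)$ together with $Q\ge\tilde Q$) that $\int Q(x)|w(x-z)|^{p+1}\,dx>\tilde Q\int|w|^{p+1}$ whenever $Q\not\equiv\tilde Q$ on a set meeting the (translated) support — this pushes the mountain pass value strictly below $c_\infty$. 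In the borderline case $Q\equiv\tilde Q$ identically the exterior domain itself breaks the symmetry: the presence of the hole $\Omega$ with the Neumann condition strictly lowers the energy compared to the whole-space problem, exactly as in Esteban \cite{ME}; I would make this precise by using a whole-space ground state translated far from $\Omega$ as a comparison function and estimating the (negative) contribution of the missing region. Once $c<c_\infty$ is established, the case $|y_n|\to\infty$ is excluded, so the weak limit $u$ is nontrivial, $u_n\to u$ strongly, and $u$ is the desired non-negative ground state solution of $(P)$.
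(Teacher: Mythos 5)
Your proposal follows essentially the same route as the paper: mountain pass geometry, a Palais--Smale sequence at the Nehari/mountain-pass level, the Lions-type lemma (Proposition~\ref{GC-C}) to prevent vanishing, and the strict inequality $c_1<c_\infty$ (the paper's Proposition~\ref{Gres02}) to exclude the possibility that the mass drifts to infinity; the strict inequality is obtained by testing with translates $\tilde u(\cdot-\sigma_n)$ of the whole-space ground state and letting $|\sigma_n|\to\infty$.

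One remark on your discussion of $c<c_\infty$: the case split ``$Q\not\equiv\tilde Q$'' versus ``$Q\equiv\tilde Q$'' is unnecessary, and the claim that $Q>\tilde Q$ somewhere alone yields the strict inequality is not sound as stated. Although $Q\geq\tilde Q$ does make $\int_{\R^N\setminus\Omega}(Q-\tilde Q)|u|^{p+1}\geq 0$ (a helpful sign), the exterior nonlinear term also loses the positive contribution $\int_\Omega\tilde Q|u|^{p+1}$ relative to $I_\infty$, so the $Q$-comparison alone does not settle the direction. What the paper actually does is show that the ``missing hole'' portion of the quadratic form,
$$
t_n=\frac{1}{2}\iint_{\Omega\times\Omega}\frac{|u_n(x)-u_n(y)|^2}{|x-y|^{N+2s}}\,dy\,dx+\int_\Omega|u_n|^2\,dx,
$$
dominates the unfavorable term $\int_\Omega\tilde Q|u_n|^{p+1}$ (via Sobolev embedding on $\Omega$, giving $s_n/t_n\lesssim t_n^{(p-1)/2}\to0$), while $Q\geq\tilde Q$ merely guarantees the remaining $Q$-term has the right sign. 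So the ``Esteban hole'' mechanism you describe for the constant-$Q$ case is in fact the mechanism used in all cases, with $(Q_1)$ playing only the role of not spoiling it. With that adjustment your sketch matches the paper's proof.
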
 
The proof of Theorem \ref{ground} is obtained by adapting some arguments developed in \cite{CAPCEM}, \cite{CACT} and \cite{SDXREV}. More precisely, we will find critical points of the functional $I: H_{\tilde{\Omega}}^{s} \to \R$ associated to $(P)$ given by   
\begin{equation} \label{FEN}
\begin{aligned}
I(u) &= \frac{1}{2}\left(\frac{1}{2}\iint_{\R^{2N}\setminus \Omega^2}\frac{|u(x) - u(y)|^2}{|x-y|^{N+2s}}dy dx +\int_{\R^N\setminus \Omega}|u|^2dx\right) - \frac{1}{p+1}\int_{\R^N \setminus \Omega} Q(x)|u|^{p+1}dx\\
\end{aligned}
\end{equation}
on the Nehari manifold 
$$
\mathcal{N} = \{u\in H_{\tilde{\Omega}}^{s}\setminus \{0\}:\;\;I'(u)u = 0\},
$$
where $\tilde{\Omega} = \R^N \setminus \Omega$ and $H_{\tilde{\Omega}}^{s}$ is the Sobolev space given by 
$$
H_{\tilde{\Omega}}^{s} = \left\{u: \R^N \to \R\quad \mbox{measurable}\;:\;\frac{1}{2}\iint_{\R^{2N} \setminus \Omega^2} \frac{|u(x) - u(y)|^2}{|x-y|^{N+2s}}dy dx + \int_{\mathbb{R}^N \setminus \Omega} |u|^2 dx<\infty \right\},
$$
endowed with the norm
$$
\|u\|_{H_{\tilde{\Omega}}^{s}}=\left(\frac{1}{2} \iint_{\R^{2N} \setminus \Omega^2} \frac{|u(x) - u(u)|^2}{|x-y|^{N+2s}}dy dx + \int_{\Omega} |u|^2 dx\right)^{\frac{1}{2}}.
$$

Our second main result is concerned to existence of nodal solution, and in this case we fix $D>0$ such that  
$$
|x| \leq D, \quad  \forall x \in \Omega.
$$

\begin{theorem}\label{nodal}
Suppose $p\in (1, \frac{N+2s}{N-2s})$, $(Q_1)$ and  that there are $C>0$, $\gamma > (N+2s)(p+1) - N$, $R>D+1$ and $\sigma_R \in \mathbb{R}^N$ with $|\sigma_R| > 3R$ such that 
$$
\displaystyle Q(x) - \tilde{Q} \geq CR^{\gamma},\quad \forall x\in B(\sigma_R,2R) \setminus B(\sigma_R,R). \leqno{(Q_2)}
$$ 
Then, there is $R_0>0$ such that $(P)$ has a nodal solution for all $R \geq R_0$.
\end{theorem}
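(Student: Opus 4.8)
The plan is to realize the nodal solution as a minimizer of $I$ over the sign-changing Nehari-type set
$$
\mathcal{M}=\big\{u\in H_{\tilde{\Omega}}^{s}\ :\ u^{+}\neq0,\ u^{-}\neq0,\ I'(u)u^{+}=0,\ I'(u)u^{-}=0\big\},
$$
where $u^{+}=\max\{u,0\}$ and $u^{-}=\min\{u,0\}$. Contrary to the case $s=1$, neither $I(u)=I(u^{+})+I(u^{-})$ nor $I'(u)u^{\pm}=I'(u^{\pm})u^{\pm}$ holds, because of the crossed Gagliardo interaction $\langle u^{+},u^{-}\rangle_{H_{\tilde{\Omega}}^{s}}$; this is the main structural difficulty and forces one to work with $\mathcal{M}$ directly. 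Following and adapting \cite{KTKWRW}, I would first establish: for each $u$ with $u^{\pm}\neq0$ there is a unique pair $(t_{u},\tau_{u})\in(0,\infty)^{2}$ with $t_{u}u^{+}+\tau_{u}u^{-}\in\mathcal{M}$ and $I(t_{u}u^{+}+\tau_{u}u^{-})=\max_{t,\tau>0}I(tu^{+}+\tau u^{-})$ (a Miranda-type argument for the system $I'(tu^{+}+\tau u^{-})(tu^{+})=I'(tu^{+}+\tau u^{-})(\tau u^{-})=0$, using $(Q_{1})$); $\inf_{\mathcal{M}}\big(\|u^{+}\|_{H_{\tilde{\Omega}}^{s}}+\|u^{-}\|_{H_{\tilde{\Omega}}^{s}}\big)>0$; $c_{*}:=\inf_{\mathcal{M}}I>0$ with every minimizing sequence bounded (on $\mathcal{M}$ one has $I(u)=(\tfrac12-\tfrac1{p+1})\|u\|_{H_{\tilde{\Omega}}^{s}}^{2}$); and that $\mathcal{M}$ is a natural constraint, so a minimizing sequence may be taken to be a Palais--Smale sequence for $I$ on $H_{\tilde{\Omega}}^{s}$.

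The decisive point is the strict bound $c_{*}<c+c_{\infty}$ for $R$ large, where $c$ is the level of Theorem \ref{ground} and $c_{\infty}$ is the ground state level of the autonomous limit problem $(-\Delta)^{s}w+w=\tilde{Q}\,|w|^{p-1}w$ in $\R^{N}$. Let $w_{0}\ge0$ be the ground state from Theorem \ref{ground}, and $w>0$ a ground state of the limit problem, which has the polynomial behaviour $c_{1}|x|^{-(N+2s)}\le w(x)\le c_{2}|x|^{-(N+2s)}$ for $|x|$ large. Fix a cut-off $\zeta_{R}$ supported in $B(0,R)$ with $\zeta_{R}\to1$ pointwise, and take
$$
u_{R}=\zeta_{R}w_{0}-w(\cdot-\sigma_{R}),\qquad A_{R}:=B(\sigma_{R},2R)\setminus B(\sigma_{R},R).
$$
Because $|\sigma_{R}|>3R>D$, the support $B(0,R)$ of $\zeta_{R}w_{0}$ is disjoint from $A_{R}$ and essentially disjoint from the mass of $w(\cdot-\sigma_{R})$, so that: (i) $u_{R}$ truly changes sign, since $u_{R}(\sigma_{R})=-w(0)<0$ while $u_{R}>0$ near the origin; (ii) $u_{R}^{+}\to w_{0}$ in $H_{\tilde{\Omega}}^{s}$ and $\|u_{R}^{-}\|_{H_{\tilde{\Omega}}^{s}}$ remains bounded; (iii) the interaction $\langle u_{R}^{+},u_{R}^{-}\rangle_{H_{\tilde{\Omega}}^{s}}$ and the mixed nonlinear terms are $O(R^{-(N+2s)})$. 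Projecting $u_{R}$ onto $\mathcal{M}$ and using (iii), $c_{*}\le I(t_{u_{R}}u_{R}^{+}+\tau_{u_{R}}u_{R}^{-})$ is at most the sum of the one-variable Nehari maxima of $u_{R}^{+}$ and of $u_{R}^{-}$, plus $o_{R}(1)$. The first tends to $I(w_{0})=c$. For the second, $\|u_{R}^{-}\|_{H_{\tilde{\Omega}}^{s}}^{2}$ is bounded while $\zeta_{R}\equiv0$ on $A_{R}$, so by $(Q_{2})$ and the lower bound on $w$,
$$
\int_{\R^{N}\setminus\Omega}Q\,|u_{R}^{-}|^{p+1} \ \ge\ \int_{A_{R}}\big(Q-\tilde{Q}\big)\,w^{p+1}(\cdot-\sigma_{R}) \ \gtrsim\ R^{\gamma}\!\int_{R<|y|<2R}\!\!|y|^{-(N+2s)(p+1)}\,dy \ \gtrsim\ R^{\,\gamma+N-(N+2s)(p+1)},
$$
which tends to $+\infty$ because $\gamma>(N+2s)(p+1)-N$. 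Hence that second Nehari maximum, comparable to $\|u_{R}^{-}\|_{H_{\tilde{\Omega}}^{s}}^{2(p+1)/(p-1)}\big(\int Q|u_{R}^{-}|^{p+1}\big)^{-2/(p-1)}$, tends to $0$, and so $c_{*}\le c+o_{R}(1)<c+c_{\infty}$ for all $R\ge R_{0}$.

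For compactness, take $(u_{n})\subset\mathcal{M}$ with $I(u_{n})\to c_{*}$; by Ekeland's principle and the natural-constraint property it is a bounded $(PS)_{c_{*}}$ sequence for $I$. Passing to a subsequence, $u_{n}^{+}\rightharpoonup v^{+}$, $u_{n}^{-}\rightharpoonup v^{-}$, and $v:=v^{+}+v^{-}$ solves $I'(v)=0$. If $v^{+}\neq0$ and $v^{-}\neq0$, then $v\in\mathcal{M}$, so $c_{*}\le I(v)$; by weak lower semicontinuity of $\|\cdot\|_{H_{\tilde{\Omega}}^{s}}$ also $I(v)\le\liminf I(u_{n})=c_{*}$, whence $v$ minimizes $I$ on $\mathcal{M}$ and, being a nontrivial sign-changing critical point of $I$, is a nodal solution of $(P)$. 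It remains to exclude $v^{+}=0$ or $v^{-}=0$: combining the Lions type theorem for exterior domains (Proposition \ref{GC-C}) with a Brezis--Lieb splitting of $\|\cdot\|_{H_{\tilde{\Omega}}^{s}}$ and of $\int Q|\cdot|^{p+1}$, a summand converging weakly to $0$ cannot converge strongly to $0$ (since $\inf_{\mathcal{M}}\|u^{\pm}\|_{H_{\tilde{\Omega}}^{s}}>0$), hence does not vanish in the sense of Lions and so concentrates at infinity, giving in the limit a nontrivial solution of the autonomous problem and carrying at least $c_{\infty}$ of energy; the other summand, if nonzero, is a nontrivial critical point and carries at least $c$ (and if it too vanishes weakly, it likewise escapes, carrying $\ge c_{\infty}$). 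In every such case $c_{*}\ge c+c_{\infty}$, contradicting the previous paragraph. Therefore $v^{\pm}\neq0$, proving the theorem for all $R\ge R_{0}$.

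The hard part is the energy estimate $c_{*}<c+c_{\infty}$. In the local case \cite{DC} one uses the \emph{exponential} decay of the ground state (which is why conditions such as $(Q'_{1})$, $(Q'_{2})$ are natural there); here only the \emph{polynomial} decay $w(x)\asymp|x|^{-(N+2s)}$ is available, so the interaction and cut-off errors are themselves polynomial, of size $|\sigma_{R}|^{-(N+2s)}$, and must be overcome by the gain from $(Q_{2})$. The threshold $\gamma>(N+2s)(p+1)-N$ is exactly what makes $R^{\gamma}\int_{R<|y|<2R}w^{p+1}(y)\,dy\to+\infty$, so that the far bump's Nehari energy vanishes. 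Carrying out the $O(R^{-(N+2s)})$ bookkeeping rigorously, and proving the structural lemmas for $\mathcal{M}$ together with the nonlocal Lions and Brezis--Lieb lemmas, are the technical core of the argument.
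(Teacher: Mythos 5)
Your overall strategy — minimize $I$ over the nodal Nehari set $\mathcal{M}$, prove the strict bound $c_{*}<c_{1}+c_{\infty}$ with a two-bump test function (positive bump from the ground state of $(P)$, negative bump from a far-translated ground state of $(P_{\infty})$, with the polynomial decay and $(Q_{2})$ doing the work), and then rule out vanishing by concentration-compactness — coincides with the paper's. Your energy estimate takes a slightly different route than the paper's: the paper uses $\alpha u_{1}-\tau\tilde{u}_{\sigma}$ with $\alpha,\tau\in[1/2,2]$ and a Miranda box, and keeps the far bump at energy close to $c_{\infty}$, using the $(Q_{2})$ gain to subtract a quantity that pushes the sum strictly below $c_{1}+c_{\infty}$. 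You instead cut off the positive bump and argue that $\int Q|u_{R}^{-}|^{p+1}\to\infty$ forces the far bump's Nehari scaling $\tau_{R}\to 0$, so its contribution vanishes and $c_{*}\le c_{1}+o_{R}(1)$. This is fine in spirit and even stronger, but you then cannot use a fixed Miranda box $[1/2,2]^{2}$: the projection onto $\mathcal{M}$ is the solution of a coupled system in which the cross-interaction $t\tau\langle u^{+},u^{-}\rangle_{*}$ appears with a $1/\tau$ factor in the equation for $\tau$, so the argument that a solution exists and that the corresponding energy behaves as you claim requires a box depending on $R$ and some extra bookkeeping you do not provide.

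The genuine gap is the step ``by Ekeland's principle and the natural-constraint property a minimizing sequence is a bounded $(PS)_{c_{*}}$ sequence.'' The constraints defining $\mathcal{M}$ involve $u\mapsto u^{\pm}$, which is not $C^{1}$, so $\mathcal{M}$ is not a smooth manifold and Ekeland/Lagrange-multiplier reasoning does not apply. In the literature you cite (\cite{AlvesSouto}, \cite{KTKWRW}) the order is the opposite of yours: first obtain a \emph{minimizer} on $\mathcal{M}$ (which requires some compactness), then apply the quantitative deformation lemma to prove the minimizer is a critical point; one never shows an arbitrary minimizing sequence is $(PS)$. The paper resolves the non-compactness of the exterior domain by a device you omit entirely: it introduces the truncated problems $(P_{\varrho})$ with nonlinearity $\varphi_{\varrho}Q$, whose compactly supported nonlinearity provides compactness, so a minimizer $u_{\varrho}\in\mathcal{M}_{\varrho}$ of $I_{\varrho}$ exists and is a genuine critical point of $I_{\varrho}$ (via \cite{KTKWRW}); it then proves $c_{\varrho}\to c$ (Lemma \ref{Nlm03}), and applies the concentration-compactness dichotomy to the sequence $u_{\varrho_{n}}$ of exact solutions of $(P_{\varrho_{n}})$. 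Without this (or an equivalent device that first yields strong convergence and then a minimizer to which the deformation lemma can be applied), your chain of reasoning has a hole: you invoke a $(PS)$ property that you have not established and that is not obtainable by the route you indicate. A secondary issue of the same kind: after writing $u_{n}\rightharpoonup v$ you set $v=v^{+}+v^{-}$ with $u_{n}^{\pm}\rightharpoonup v^{\pm}$, but $u\mapsto u^{\pm}$ does not commute with weak limits, so this identification also needs justification.
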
 

Before concluding this introduction, we would like point out that our results are true for a large class of nonlinearity $f$, however we decided to work with the case $f(t)=|t|^{p-2}t$ to become the ideas more clear to the reader. 

The paper is organized as follows: In Section 2, we prove some results involving the limit problem. In Section 3, we prove Theorem \ref{ground},  while in Section 4 we show Theorem \ref{nodal}. Finally, in Section 5, we wrote an appendix related to the regularity and behavior at infinite of the ground state solution obtained in Theorem \ref{ground}, because these facts are very important in our paper. 

\section{Preliminary Results}

In this section we introduce some function spaces and consider the existence of positive solution of the limit problem 
$$
\left\{
\begin{aligned}
(-\Delta)^su + u &= \tilde{Q}|u|^{p-1}u\;\;\mbox{in}\;\;\R^N, \\
u&\in H^s(\R^N).
\end{aligned}
\right.
\eqno{(P_\infty)}
$$
We denote by $H^s(\R^N)$ the fractional Sobolev spaces endowed with the norm 
$$
\|u\|_s = \left(\frac{1}{2}\int_{\R^N}\int_{\R^N} \frac{|u(x) - u(y)|^2}{|x-y|^{N+2s}}dy dx + \int_{\R^N}|u|^2dx\right)^{\frac{1}{2}}.
$$

\noindent 
If  $G \subset \R^N$ is a smooth domain, we introduce the fractional space
$$
H_{G}^{s} = \left\{u: \R^N \to \R\quad \mbox{measurable}\;:\;\frac{1}{2}\iint_{\R^{2N} \setminus (G^c)^2} \frac{|u(x) - u(u)|^2}{|x-y|^{N+2s}}dy dx + \int_{G}|u|^2dx <\infty\right\},
$$
endowed with the norm
$$
\|u\|_{H_{G}^{s}}= \left(\frac{1}{2}\iint_{\R^{2N} \setminus (G^c)^2} \frac{|u(x) - u(u)|^2}{|x-y|^{N+2s}}dy dx + \int_{G}|u|^2dx\right)^{\frac{1}{2}}.
$$
It is well known that $H_{G}^{s}$ is a Hilbert space with the inner product $\langle\cdot, \cdot \rangle_{H_{G}^{s}}$, given by
$$
\langle u, v \rangle_{H_{G}^{s}} = \frac{1}{2}\iint_{\R^{2N}\setminus (G^c)^2}\frac{[u(x) - u(y)][v(x) - v(y)]}{|x-y|^{N+2s}}dy dx + \int_{G} u(x)v(x)dx.
$$ 

Related to the $H_{G}^{s}$ we have important information that is stated in lemma below, which can be found in \cite{FDGD}.

\begin{lemma} \label{Pnta01}
\begin{enumerate} 
\item Let $H^s(G)$ the classical fractional Sobolev space endowed with the norm 
$$
\|u\|_{H^s(G)}^{2} = \frac{1}{2}\int_{G}\int_{G} \frac{|u(x) - u(y)|^2}{|x-y|^{N+2s}}dy dx + \int_{G}|u|^2dx.
$$
Since $G \times G \subset \R^{2N} \setminus (G^c \times G^c)$, then the embedding $H_{G}^{s} \hookrightarrow H^s(G)$ is continuous. 
\item The embedding $H^s(\R^N) \hookrightarrow H_{G}^{s}$ is continuous.
\item Since $H^s(G) \hookrightarrow L^p(G)$ continuously for every $p\in [2, \frac{2N}{N-2s}]$, by $(1)$ we have 
$$
H_G^s \hookrightarrow L^p(G) \;\;\mbox{for all}\;\;p\in \left[2, \frac{2N}{N-2s}\right].
$$
\item If $G$ is bounded, we have the compactness embedding 
$$
H_G^s \hookrightarrow L^p(G) \;\;\mbox{for all}\;\;p\in \left[1, \frac{2N}{N-2s}\right).
$$
\end{enumerate}
\end{lemma}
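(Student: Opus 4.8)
The plan is to deduce all four items from the corresponding properties of the classical fractional Sobolev space $H^s(G)$, which are standard (see \cite{EDNGPEV}); the only genuinely new observation is a set-theoretic comparison of the integration domains appearing in the two norms.

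For item (1), I would first check the inclusion $G\times G\subset \R^{2N}\setminus(G^c\times G^c)$: if $(x,y)\in G\times G$ then $x\notin G^c$, hence $(x,y)\notin G^c\times G^c$. Since the Gagliardo integrand $|u(x)-u(y)|^2|x-y|^{-N-2s}$ is nonnegative and the $L^2(G)$ part of $\|\cdot\|_{H^s(G)}$ coincides with the $L^2(G)$ part of $\|\cdot\|_{H^s_G}$, this inclusion yields $\|u\|_{H^s(G)}\le\|u\|_{H^s_G}$, i.e.\ the continuous embedding $H^s_G\hookrightarrow H^s(G)$ with norm at most $1$. For item (2) I would argue the reverse comparison: $\R^{2N}\setminus(G^c\times G^c)\subset\R^{2N}$ and $G\subset\R^N$, so for every $u\in H^s(\R^N)$ both terms defining $\|u\|_{H^s_G}^2$ are dominated by the corresponding terms of $\|u\|_s^2$; hence $\|u\|_{H^s_G}\le\|u\|_s$, which in particular shows $H^s(\R^N)\subset H^s_G$ with a continuous inclusion.

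Items (3) and (4) then follow by composition. For (3) I would compose the embedding of (1) with the classical continuous embedding $H^s(G)\hookrightarrow L^p(G)$ for $p\in[2,\frac{2N}{N-2s}]$, valid since $G$ is smooth; a composition of continuous linear maps is continuous, so $H^s_G\hookrightarrow L^p(G)$ continuously. For (4), using that a bounded smooth $G$ is an extension domain for $H^s$, I would invoke the Rellich--Kondrachov type compact embedding $H^s(G)\hookrightarrow L^p(G)$ for $p\in[1,\frac{2N}{N-2s})$ and compose it with the bounded linear embedding from (1); since a bounded operator followed by a compact one is compact, $H^s_G\hookrightarrow L^p(G)$ is compact in this range.

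I do not expect any real obstacle here: the whole content is the inclusion $G\times G\subset\R^{2N}\setminus(G^c\times G^c)$ in item (1), together with citations of the standard continuity and compactness of the embeddings of $H^s(G)$; the smoothness of $G$ is used only to guarantee the extension property that makes those classical embeddings available. The reference \cite{FDGD} contains a detailed write-up of these facts.
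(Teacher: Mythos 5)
Your argument is correct and is exactly the one the paper intends: the lemma is stated with its own proof sketch built in (the inclusion $G\times G\subset \R^{2N}\setminus (G^c\times G^c)$ for item (1), monotonicity of the nonnegative Gagliardo integrand for (1) and (2), and composition with the classical continuous/compact embeddings of $H^s(G)$ for (3) and (4)), and the paper simply defers the details to \cite{FDGD}. Your write-up fills in precisely those details, so there is nothing to add.
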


\noindent 
Associated to problem $(P_\infty)$, we have the functional $I_\infty : H^s(\R^N) \to \R$ defined as 
\begin{equation}\label{P01}
I_\infty (u) = \frac{1}{2}\left(\frac{1}{2}\iint_{\R^{2N}}\frac{|u(x) - u(y)|^2}{|x-y|^{N+2s}}dy dx +\int_{\R^N}|u|^2dx\right) - \frac{1}{p+1}\int_{\R^N}\tilde{Q}|u|^{p+1}dx.
\end{equation}
It is standard to show that $I_\infty \in C^1(H^s(\R^N), \R)$ with  
\begin{equation}\label{P02}
I'_\infty(u)v = \frac{1}{2}\iint_{\R^{2N}}\frac{[u(x) - u(y)][v(x) - v(y)]}{|x-y|^{N+2s}}dy dx + \int_{\R^N}uvdx - \int_{\R^N} \tilde{Q}|u|^{p-2}uvdx,
\end{equation}
for all $u,v \in H^s(\R^N)$. 

We start our analysis recalling that $I_{\infty}$ satisfies the mountain pass geometry 
\begin{lemma}\label{Plm02}
The functional $I_{\infty}$ satisfies the following conditions:
\begin{enumerate}
\item[(i)] There exist $\beta, \delta>0$, such that $I_{\infty}(u) \geq \beta$ if $\|u\|_{s} = \delta$.
\item[(ii)] There exists $e\in H^{s}(\mathbb{R}^N)$ with $\|e\|_{s} > \delta$ such that $I_{\infty}(e) <0$.
\end{enumerate}
\end{lemma}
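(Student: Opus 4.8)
The plan is to verify conditions (i) and (ii) directly from the formula \eqref{P01}, relying only on the continuous Sobolev embedding $H^s(\mathbb{R}^N)\hookrightarrow L^{p+1}(\mathbb{R}^N)$ and on the fact that the nonlinear term is superquadratic. Note first that the quadratic part of $I_\infty$ is exactly $\frac12\|u\|_s^2$, so that
$$
I_\infty(u)=\frac12\|u\|_s^2-\frac{\tilde Q}{p+1}\int_{\mathbb{R}^N}|u|^{p+1}\,dx .
$$

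For (i): since $p\in(1,\frac{N+2s}{N-2s})$ we have $p+1\in\left(2,\frac{2N}{N-2s}\right)$, hence there is a constant $S>0$ with $\|u\|_{L^{p+1}(\mathbb{R}^N)}\le S\|u\|_s$ for all $u\in H^s(\mathbb{R}^N)$. This gives
$$
I_\infty(u)\ge \frac12\|u\|_s^2-\frac{\tilde Q\,S^{p+1}}{p+1}\|u\|_s^{p+1}
=\|u\|_s^2\left(\frac12-\frac{\tilde Q\,S^{p+1}}{p+1}\|u\|_s^{p-1}\right).
$$
As $p-1>0$, the bracket is $\ge \frac14$ once $\|u\|_s=\delta$ with $\delta>0$ chosen sufficiently small, and then $I_\infty(u)\ge \frac{\delta^2}{4}=:\beta>0$ on the sphere $\|u\|_s=\delta$.

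For (ii): fix any $w\in H^s(\mathbb{R}^N)\setminus\{0\}$ and consider the ray $t\mapsto tw$ for $t>0$, so that
$$
I_\infty(tw)=\frac{t^2}{2}\|w\|_s^2-\frac{t^{p+1}\tilde Q}{p+1}\int_{\mathbb{R}^N}|w|^{p+1}\,dx .
$$
Because $\tilde Q>0$, $w\not\equiv 0$ and $p+1>2$, the negative term dominates as $t\to+\infty$, so $I_\infty(tw)\to-\infty$; picking $t_0$ large enough that $\|t_0 w\|_s>\delta$ and $I_\infty(t_0 w)<0$ and setting $e:=t_0 w$ finishes the proof.

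There is no genuine obstacle here — this is the standard mountain pass geometry check. The only point that requires attention is that $p+1$ stay strictly below the critical exponent $\frac{2N}{N-2s}$ so that the embedding used in (i) is available, and this is exactly what the hypothesis $p<\frac{N+2s}{N-2s}$ guarantees.
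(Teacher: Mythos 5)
Your proof is correct and uses exactly the standard Sobolev-embedding estimate plus the superquadratic-growth argument that the paper itself uses for the analogous lemma about the functional $I$ (the paper states Lemma~\ref{Plm02} without proof, as "recalling", since it is routine). No substantive differences.
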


\noindent 
Let $\Gamma_{\infty} = \{\gamma \in C([0,1], H^{s}(\mathbb{R}^N)):\;\;\gamma (0) = 0, I_{\infty}(\gamma (1))<0\}$, from Lemma \ref{Plm02}, the mountain pass level
$$
c_\infty = \inf_{\gamma \in \Gamma_{\infty}} \sup_{t\in [0,1]} I_{\infty}(\gamma (t)) \geq \beta >0,
$$
is well defined, and the equality below holds 
\begin{equation}\label{P03*}
c_{\infty}= \inf_{u\in \mathcal{N}_{\infty}} I_{\infty}(u)
\end{equation}
where 
$$
\mathcal{N}_{\infty} = \{u\in H^{s}(\mathbb{R}^N)\setminus \{0\}:\;\;I'_{\infty}(u)u=0\}
$$ 
is the Nehari manifold associated to $(P_\infty)$. 

Arguing as in \cite[Theorem 1.5]{PFAQJT}( see also  \cite[Proposition 3.1]{FLS}), it is easy to prove the existence of a ground state solution $\tilde{u} \in H^{s}(\mathbb{R}^N)$, which can be chosen positive and 
\begin{equation} \label{decaimento}
0<\frac{C_1}{|x|^{N+2s}} \leq \tilde{u}(x) \leq \frac{C_2}{|x|^{N+2s}}, \quad \mbox{for all} \quad |x| \geq 1.  
\end{equation}
We recall that by a ground state we understand by a function $\tilde{u} \in H^{s}(\mathbb{R}^N)$ satisfying 
$$
I_\infty(\tilde{u})=c_\infty \quad \mbox{and} \quad I'_\infty(\tilde{u})=0.
$$

\section{Proof of Theorem \ref{ground}}

In this section, we are going to prove Theorem \ref{ground}. We start our analysis by proving a version of a Lions type lemma that is crucial in our approach.  

\begin{proposition}\label{GC-C}
Let $G \subset \mathbb{R}^N$ be an exterior domain with smooth bounded boundary and  $(u_n) \subset H_{G}^{s}$ be a bounded sequence such that 
\begin{equation}\label{GS01}
\lim_{n\to \infty}\sup_{y\in \R^N}\int_{U(y,T)} |u_n|^2dx =0,
\end{equation}
for some $T>0$ and $U(y,T) = B(y,T)\cap G$ with $U(y,T)\neq \emptyset$. Then,
\begin{equation}\label{GS02}
\lim_{n\to \infty}\int_{G}|u_n|^{p}dx = 0\;\;\mbox{for all}\;\;p\in (2,2_{s}^{*}).
\end{equation}
\end{proposition}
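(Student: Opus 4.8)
The plan is to prove a version of the Lions vanishing lemma adapted to the exterior domain $G$; the only feature not already present when $G=\R^N$ is the behaviour near $\partial G$, which I would neutralise by isolating a bounded collar of $G$ around its boundary. So I would first fix a countable family of balls $B_i=B(y_i,T)$, $i\in\N$, covering $G$ and with uniformly finite overlap (every point of $\R^N$ lies in at most $\kappa=\kappa(N)$ of them). Since $G^c=\R^N\setminus G$ is bounded, only finitely many of these balls, say $B_1,\dots,B_m$, meet $\overline{G^c}$; I set $K=\bigcup_{i\le m}(B_i\cap G)$, a bounded subset of $G$, and note that $B_i\subset G$ for every $i>m$ and that $G\subset K\cup\bigcup_{i>m}B_i$.

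On the bounded piece $K$ the conclusion would be immediate: since $B_i\cap G=U(y_i,T)$ for $i\le m$, hypothesis \eqref{GS01} gives $\|u_n\|_{L^2(K)}^2\le m\sup_{y\in\R^N}\int_{U(y,T)}|u_n|^2\,dx\to 0$, and as $(u_n)$ is bounded in $L^{2_{s}^{*}}(G)$ by Lemma \ref{Pnta01}(3), interpolating the $L^p$-norm between $L^2$ and $L^{2_{s}^{*}}$ (here $p<2_{s}^{*}$ is used) yields $\|u_n\|_{L^p(K)}\to 0$. For the remaining balls, note that $B_i\subset G$ forces $B_i\times B_i\subset G\times G\subset\R^{2N}\setminus(G^c)^2$, so $\|u_n\|_{H^s(B_i)}$ in the sense of Lemma \ref{Pnta01}(1) is finite and, by the finite overlap, $\sum_{i>m}\|u_n\|_{L^2(B_i)}^2\le\kappa\|u_n\|_{L^2(G)}^2$ and $\sum_{i>m}\|u_n\|_{H^s(B_i)}^2\le C\|u_n\|_{H_G^s}^2$, both bounded in $n$. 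Then I would combine the elementary interpolation inequality $\|u_n\|_{L^p(B_i)}\le\|u_n\|_{L^2(B_i)}^{1-\theta}\|u_n\|_{L^{2_{s}^{*}}(B_i)}^{\theta}$, $\frac1p=\frac{1-\theta}{2}+\frac{\theta}{2_{s}^{*}}$, with the fractional Sobolev inequality on the ball $B_i$ (whose constant is independent of the centre $y_i$ by translation invariance — this is precisely why the finitely many ``boundary'' balls were removed first) to get, for $i>m$,
$$
\|u_n\|_{L^p(B_i)}^p\le C\,\|u_n\|_{L^2(B_i)}^{(1-\theta)p}\,\|u_n\|_{H^s(B_i)}^{\theta p}.
$$

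The core of the argument is summing these local estimates. Writing $\epsilon_n=\sup_{y\in\R^N}\int_{U(y,T)}|u_n|^2\,dx\to 0$, so that $\|u_n\|_{L^2(B_i)}^2\le\epsilon_n$ for every $i>m$, a naive application of Hölder's inequality only yields $\sum_{i>m}\|u_n\|_{L^p(B_i)}^p\le C$, because the interpolation exponents satisfy $\frac{(1-\theta)p}{2}+\frac{\theta p}{2_{s}^{*}}=1$ exactly, leaving no slack to exploit. To get decay I would peel off a positive power of $\epsilon_n$: split $\|u_n\|_{L^2(B_i)}^{(1-\theta)p}=\|u_n\|_{L^2(B_i)}^{(1-\theta)p-\lambda}\|u_n\|_{L^2(B_i)}^{\lambda}$ with a suitable $\lambda\in(0,(1-\theta)p)$, bound the first factor by $\epsilon_n^{((1-\theta)p-\lambda)/2}$, and apply Hölder to $\sum_{i>m}\|u_n\|_{L^2(B_i)}^{\lambda}\|u_n\|_{H^s(B_i)}^{\theta p}$ with conjugate exponents that — because $p>2$ — can be chosen so that both resulting series are dominated by powers of $\sum_{i>m}\|u_n\|_{L^2(B_i)}^2$ and $\sum_{i>m}\|u_n\|_{H^s(B_i)}^2$, hence bounded. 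This gives $\sum_{i>m}\|u_n\|_{L^p(B_i)}^p\le C\epsilon_n^{\mu}\to 0$ for some $\mu=\mu(N,s,p)>0$, and since $\|u_n\|_{L^p(G)}^p\le\|u_n\|_{L^p(K)}^p+\sum_{i>m}\|u_n\|_{L^p(B_i)}^p$, \eqref{GS02} follows. I expect this final exponent bookkeeping to be the only genuinely delicate point; the covering and the handling of the boundary collar are routine once one observes that $G^c$ is bounded.
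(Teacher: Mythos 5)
Your proposal is correct, and the exponent bookkeeping you flag as the delicate point does close: with $\alpha=\frac{(1-\theta)p}{2}$, $\beta=\frac{\theta p}{2}$ one has $\alpha+\beta=\frac{p}{2}>1$, which is precisely the slack needed to pick $\lambda\geq 2-\theta p$ and conjugate H\"older exponents $r=2/\lambda$, $r'=2/(\theta p)$ so that both resulting series are dominated by $\sum_{i}\|u_n\|^2_{L^2(B_i)}$ and $\sum_{i}\|u_n\|^2_{H^s(B_i)}$, while a strictly positive power $\epsilon_n^{((1-\theta)p-\lambda)/2}$ remains; and if $\theta p\geq 2$ no H\"older is needed, since then $\beta\geq 1$. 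Your route is, however, genuinely different from the paper's. The paper does not rebuild the covering argument: it splits $G$ into a thin collar $\hat\Omega_\delta$ of $\partial\Omega$, where $\int_{\hat\Omega_\delta}|u_n|^p$ is made uniformly small via H\"older, the smallness of $|\hat\Omega_\delta|$, and boundedness of $(u_n)$ in $L^{2_s^*}(G)$, and the complement $\Omega_\delta$, where it multiplies $u_n$ by a smooth cutoff $\varphi$ vanishing on $\Omega$ so that $v_n=\varphi u_n\in H^s(\R^N)$ inherits the vanishing hypothesis, and then cites the standard Lions lemma on $\R^N$ for $(v_n)$ as a black box. Both proofs isolate the boundary of $\Omega$ as the sole new obstruction (the paper explicitly notes the lack of a uniform fractional Sobolev constant on the truncated balls $U(y,T)$) and both manage to never invoke Sobolev on a partial ball: the paper shrinks a collar and extends, you discard the finitely many partial balls and observe — neatly — that hypothesis \eqref{GS01} covers them directly, since $U(y_i,T)=B_i\cap G$. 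Your version is self-contained and avoids the extension operator $E:H^s(\R^N\setminus\Omega)\to H^s(\R^N)$, at the price of carrying out the summation; the paper's is shorter once the whole-space lemma is taken for granted.
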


Before proving the above proposition, we would like to point out some facts involving our proof. When $s=1$, $H_{G}^{s} = H^1(G)$, and in this case, it is well known that the constant associated with the embedding
$$
H^1(U(y,T)) \hookrightarrow L^p(U(y,T)),\;\;\mbox{for}\;\;p\in [2, 2^{*}],
$$
does not depend of $U(y, R)$, since $U(y,R)$ verifies the uniform cone condition, see \cite{Adams1} for more details. This fact plays an important role in the proof of Proposition \ref{GC-C}. Unfortunately, after a bibliography review, we did not find any paper or book with a similar results for the fractional case, that is, $s \in (0,1)$. Here, we are going to prove Proposition \ref{GC-C} by using a new approach. 

\begin{proof} ( Proposition \ref{GC-C} )
Since $G$ is an exterior domain with smooth boundary, then $\Omega = \R^N \setminus G$ is a smooth bounded domain. Moreover, by using extension operator $E:H^{s}(\mathbb{R}^N \setminus \Omega) \to H^{s}(\mathbb{R}^N)$, without loss of generality, we can assume that $(u_n)$ is a bounded sequence in $H^{s}(\mathbb{R}^N)$, where we are identifying $u_n$ with $E(u_n)$.

In the sequel, for $\delta>0$ small enough, we introduce the sets 
$$
\Omega_\delta = \{x\in G:\;\;dist(x, \partial \Omega)\geq \delta\} \;\;\mbox{and}\;\;\hat{\Omega}_\delta = \{x\in G:\;\;dist(x, \partial \Omega)\leq \delta\}.
$$  
Then, by H\"older inequality,  
$$
\begin{aligned}
\int_{\hat{\Omega}_\delta}|u_n|^p &\leq |\hat{\Omega}_\delta|^{\frac{2_{s}^{*}-p}{2_{s}^{*}}}\left(\int_{\hat{\Omega}_\delta}|u_n|^{2_{s}^{*}}dx\right)^{\frac{p}{2_{s}^{*}}}\\
&\leq |\tilde{\Omega}_\delta|^{\frac{2_{s}^{*}-p}{2_{s}^{*}}} \|u_n\|_{L^{2_{s}^{*}}(G)}^{p} \leq |\hat{\Omega}_\delta|^{\frac{2_{s}^{*}-p}{2_{s}^{*}}} C\|u_n\|_{H_{G}^{s}}^{p}.
\end{aligned}
$$
As $(u_n)$ is bounded, given $\epsilon >0$, there is $\delta >0$ such that 
\begin{equation}\label{GS03}
\int_{\tilde{\Omega}_\delta}|u_n|^{p}dx < \frac{\epsilon}{2},\;\;\forall n \in \N.
\end{equation} 
On the other hand, let $\varphi \in C^\infty(\R^N)$ such that $0\leq \varphi (x)\leq 1$ for all $x\in \R^N$ and 
$$
\begin{aligned}
&\varphi(x) = 1,\;\,x\in \Omega_\delta,\\
&\varphi (x) = 0,\;\;x\in \Omega\;\;\mbox{and}\\
&|\nabla \varphi(x)|\leq M.
\end{aligned}
$$ 
Setting  $v_n(x) = \varphi (x)u_n(x)$, we can use the same arguments employed in \cite[Lemma 5.3]{EDNGPEV} to get  
$$
\|v_n\|_{s} \leq C\|u_n\|_{H_{G}^{s}}, \quad \forall n \in \mathbb{N},
$$
from where it follows that $(v_n)$ is a bounded sequence in $H^s(\R^N)$. Moreover, as 
$$
\begin{aligned}
\int_{B(y,T)}|v_n|^2dx &=\int_{B(y,T)\cap G}|v_n|^2dx\leq \int_{U(y,T)}|u_n|^2dx,
\end{aligned}
$$
by (\ref{GS01}),  
\begin{equation}\label{GS04}
\lim_{n\to \infty}\sup_{y\in \R^N}\int_{B(y,T)}|v_n|^2dx = 0.
\end{equation}
Hence, by \cite[Lemma 2.2]{PFAQJT}, 
\begin{equation}\label{GS05}
v_n \to 0\;\;\mbox{in}\;\;L^p(\R^N),\;\;\mbox{for}\;\;p\in (2,2_{s}^{*}).
\end{equation} 
Thereby, by (\ref{GS05}), 
\begin{equation}\label{GS06}
\int_{\Omega_\delta}|u_n|^pdx = \int_{\Omega_\delta}|v_n|^pdx\leq \int_{\R^N}|v_n|^pdx\to 0\;\;\mbox{as}\;\;n\to +\infty.
\end{equation}
Combining (\ref{GS03}) and (\ref{GS06}), we get  
\begin{equation}\label{GS07}
\int_{G}|u_n|^pdx \to 0\;\;\mbox{as}\;\;n\to +\infty.
\end{equation}
\end{proof}

 Our next step is to prove that functional $I$ satisfies the mountain pass geometry. 
\begin{lemma} The functional $I$ verifies the mountain pass geometry

\end{lemma}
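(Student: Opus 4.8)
The plan is to verify the two standard mountain pass conditions for the functional $I$ defined in \eqref{FEN}, namely that $I$ has a strict local minimum at the origin with positive barrier, and that $I$ becomes negative along some ray. First I would recall that, by Lemma \ref{Pnta01}, the embedding $H_{\tilde\Omega}^s \hookrightarrow L^q(\tilde\Omega)$ is continuous for every $q \in [2, 2_s^*]$, so there is a constant $S_q>0$ with $\|u\|_{L^q(\tilde\Omega)} \le S_q \|u\|_{H_{\tilde\Omega}^s}$; this is the only analytic input needed.

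For condition (i), using $(Q_1)$ (which gives $Q$ bounded on $\tilde\Omega$, say $Q(x)\le \tilde Q_{\max}$, since $Q$ is continuous with a finite limit at infinity) together with the Sobolev inequality for $q=p+1 \in (2,2_s^*)$, I would estimate
\[
I(u) \;\ge\; \frac12\|u\|_{H_{\tilde\Omega}^s}^2 - \frac{\tilde Q_{\max}}{p+1}\,S_{p+1}^{p+1}\,\|u\|_{H_{\tilde\Omega}^s}^{p+1}.
\]
Since $p+1>2$, the right-hand side, viewed as a function $g(t)=\tfrac12 t^2 - c\,t^{p+1}$ of $t=\|u\|_{H_{\tilde\Omega}^s}$, is strictly positive for small $t>0$; choosing $\delta>0$ small enough and $\beta = g(\delta)>0$ gives $I(u)\ge\beta$ whenever $\|u\|_{H_{\tilde\Omega}^s}=\delta$.

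For condition (ii), I would fix any $u_0 \in H_{\tilde\Omega}^s\setminus\{0\}$ with $u_0\ge 0$ and $\int_{\tilde\Omega} Q(x)|u_0|^{p+1}dx>0$ (which holds automatically by $(Q_1)$ for any nonzero $u_0$, as $Q\ge\tilde Q>0$), and examine
\[
I(tu_0) \;=\; \frac{t^2}{2}\|u_0\|_{H_{\tilde\Omega}^s}^2 - \frac{t^{p+1}}{p+1}\int_{\tilde\Omega} Q(x)|u_0|^{p+1}dx \;\to\; -\infty
\]
as $t\to+\infty$, again because $p+1>2$. Hence $e := t_0 u_0$ works for $t_0$ large, with $\|e\|_{H_{\tilde\Omega}^s}>\delta$ and $I(e)<0$.

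I do not expect any genuine obstacle here: the argument is the textbook mountain pass geometry verification, and the only place some care is warranted is making sure the coercive quadratic term in $I$ really controls the full $H_{\tilde\Omega}^s$-norm (it does, since the first component of $I$ is exactly $\tfrac12\|u\|_{H_{\tilde\Omega}^s}^2$ by definition) and that the subcritical Sobolev embedding with the stated constant is available — both are supplied by Lemma \ref{Pnta01} and the structure of \eqref{FEN}. The proof is thus short and essentially computational once these ingredients are in place.
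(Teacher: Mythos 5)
Your proposal is correct and follows essentially the same route as the paper: both use the continuous Sobolev embedding $H_{\tilde\Omega}^s \hookrightarrow L^{p+1}(\tilde\Omega)$ together with $\|Q\|_\infty<\infty$ to bound $I$ from below by $\tfrac12 t^2 - c\,t^{p+1}$ on spheres $\|u\|_{H_{\tilde\Omega}^s}=t$, and then both exploit $p+1>2$ to show $I(tu_0)\to-\infty$ along rays. Your version is slightly more careful in displaying the Sobolev constant explicitly (the paper tacitly absorbs it), but the argument is identical in substance.
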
		
\begin{proof}	
Since $p\in (1, \frac{N+2s}{N-2s})$ and $Q$ is bounded, by Sobolev embedding we get 
$$
I(u) \geq \frac{1}{2}\|u\|_{H_{\tilde{\Omega}}^{s}}^2 - \frac{\|Q\|_\infty}{p+1}\|u\|_{H_{\tilde{\Omega}}^{s}}^{p+1}.
$$
Therefore, for every $u\in {H_{\tilde{\Omega}}^{s}}$ such that $\|u\|_{H_{\tilde{\Omega}}^{s}} = \rho$, with $\rho = \left(\frac{p+1}{4\|Q\|_\infty} \right)^{\frac{1}{p-1}}$, we compute 
$$I(u) \geq \alpha := \frac{\rho^2}{4}>0$$
So, $I$ verifies the first geometry condition of mountain pass Theorem. On the other hand, taking $u\in {H_{\tilde{\Omega}}^{s}}\setminus \{0\}$ and $t\geq 0$ we have 
$$
I(tu) = \frac{t^2}{2}\|u\|_{H_{\tilde{\Omega}}^{s}}^2 - \frac{t^{p+1}}{p+1}\int_{\R^N \setminus \Omega} Q(x)|u|^{p+1}dx
$$
Since $p+1 >2$, $I(tu) \to -\infty$ as $t\to \infty$, then $I$ verifies the second geometry condition of mountain pass theorem.
\end{proof}
The last lemma permits to apply the mountain pass theorem without Palais-Smale condition found in \cite{MW} to find a sequence $(u_n) \subset H_{\tilde{\Omega}}^{s}$ such that 
\begin{equation}\label{G01}
I(u_n) \to c_1\;\;\mbox{and}\;\;I'(u_n)\to 0
\end{equation}
where
\begin{equation}\label{G02}
c_1 = \inf_{u\in H_{\tilde{\Omega}}^{s}\setminus \{0\}}\sup_{t\geq 0} I(tu).
\end{equation}
Furthermore, we also have 
\begin{equation} \label{c1}
c_1 = \inf_{u\in \mathcal{N}}I(u),
\end{equation}
where 
$$
\mathcal{N} = \{u\in H_{\tilde{\Omega}}^{s}\setminus \{0\}: \;I'(u)u=0\}. 
$$
Hereafter, we say that $u \in H_{\tilde{\Omega}}^{s}$ is a ground state solution for $(P)$ when
$$
I(u)=c_1 \quad \mbox{and} \quad I'(u)=0.
$$

The next result shows an important relation between the levels $c_1$ and $c_\infty$.
\begin{proposition}\label{Gres02}
Suppose that $(Q_1)$ holds. Then 
\begin{equation}\label{G03}
0<c_1<c_\infty.
\end{equation}
\end{proposition}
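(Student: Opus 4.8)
The plan is to establish the two inequalities separately. The lower bound $c_1 > 0$ is immediate from the mountain pass geometry of $I$: since $c_1 = \inf_{\mathcal{N}} I$ and the geometry lemma gives $I(u) \geq \alpha > 0$ on the sphere $\|u\|_{H_{\tilde\Omega}^s} = \rho$, every $u \in \mathcal{N}$ must have norm bounded away from $0$ (because $I'(u)u = 0$ forces $\|u\|_{H_{\tilde\Omega}^s}^2 = \int Q|u|^{p+1} \le \|Q\|_\infty C \|u\|^{p+1}$, hence $\|u\| \ge \rho_0$ for some $\rho_0 > 0$), and then $I(u) = I(u) - \tfrac{1}{p+1} I'(u)u = \bigl(\tfrac12 - \tfrac1{p+1}\bigr)\|u\|_{H_{\tilde\Omega}^s}^2 \ge \bigl(\tfrac12 - \tfrac1{p+1}\bigr)\rho_0^2 > 0$.

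For the strict upper bound $c_1 < c_\infty$, I would use the ground state solution $\tilde u$ of the limit problem $(P_\infty)$ together with the decay estimate \eqref{decaimento} as a test function, after translating it far from $\Omega$. Fix $\tilde u > 0$ with $I_\infty(\tilde u) = c_\infty$, $I_\infty'(\tilde u) = 0$. For $y \in \R^N$ with $|y|$ large, set $w_y(x) = \tilde u(x - y)$; note that $w_y \in H^s(\R^N) \hookrightarrow H_{\tilde\Omega}^s$ by Lemma \ref{Pnta01}(2), so we may view $w_y$ as a competitor in \eqref{G02}. By the characterization $c_1 = \inf_{u \ne 0} \sup_{t \ge 0} I(tu)$, it suffices to show that for some such $y$,
$$
\sup_{t \ge 0} I(t\, w_y) < c_\infty.
$$
The key computation is to compare $I(t w_y)$ with $I_\infty(t w_y) = I_\infty(t\tilde u)$. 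The quadratic (Gagliardo plus $L^2$) part of $I$ restricted to $\tilde\Omega = \R^N \setminus \Omega$ differs from the full-space quadratic form of $I_\infty$ only by terms supported on $\Omega$ and by the interaction term across $\Omega$, which are of order $\int_\Omega w_y^2 + (\text{cross terms})$; using \eqref{decaimento}, these are $O(|y|^{-2(N+2s)})$ as $|y| \to \infty$ (since $\Omega \subset B(0,D)$, for $x \in \Omega$ we have $|x - y| \ge |y| - D$). On the other hand, the nonlinear term gains strictly: since $Q(x) \ge \tilde Q$ everywhere and $Q \to \tilde Q$ at infinity only,
$$
\frac{1}{p+1}\int_{\R^N \setminus \Omega} Q(x) |t w_y|^{p+1}\,dx
\;=\; \frac{t^{p+1}}{p+1}\int_{\R^N} \tilde Q\, \tilde u^{p+1}\,dx \;+\; \text{(gain)} \;-\; \text{(loss on }\Omega),
$$
where the loss on $\Omega$ is again $O(|y|^{-(N+2s)(p+1)})$, which decays faster than $|y|^{-2(N+2s)}$ is not quite what we need — so here one must be careful. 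The honest way is: $Q \equiv \tilde Q$ is false in general, so the "gain" term $\frac{1}{p+1}\int (Q - \tilde Q)|w_y|^{p+1}$ is merely nonnegative and could vanish in the limit. Thus I would instead argue more crudely: since $Q \ge \tilde Q$, we have $I(t w_y) \le \tfrac12(\text{quadratic part on }\tilde\Omega) - \tfrac{\tilde Q}{p+1} t^{p+1}\int_{\R^N \setminus \Omega} \tilde u(\cdot - y)^{p+1}$, and the quadratic part on $\tilde\Omega$ is $\le \|t w_y\|_s^2$ minus a positive quantity of order $\int_\Omega (t w_y)^2 \sim t^2 |y|^{-2(N+2s)}$, while the removed piece of the nonlinear integral is $\sim t^{p+1}|y|^{-(N+2s)(p+1)}$. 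Since $p+1 > 2$ and for the relevant $t$ near the (bounded, bounded-away-from-zero) maximizer $t_y$ these powers of $|y|$ satisfy $|y|^{-2(N+2s)} = o(|y|^{-(N+2s)(p+1)})$ is FALSE — in fact $2(N+2s) < (N+2s)(p+1)$ so $|y|^{-2(N+2s)}$ dominates, meaning the quadratic loss is the larger effect and it helps us. Hence $\sup_t I(t w_y) \le \sup_t I_\infty(t\tilde u) - c\,|y|^{-2(N+2s)} + o(|y|^{-2(N+2s)}) < c_\infty$ for $|y|$ large.

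The main obstacle, and the step requiring the most care, is precisely this asymptotic bookkeeping: one must show that the negative contribution coming from excising $\Omega$ from the quadratic form (which lowers the energy) strictly dominates any positive error, uniformly for $t$ in a compact interval around the projection of $w_y$ onto $\mathcal{N}$. Concretely I would (i) show the map $t \mapsto I(t w_y)$ attains its max at some $t_y$ with $0 < c \le t_y \le C$ independent of large $|y|$ (standard, using $(Q_1)$ and boundedness of $Q$); (ii) plug in \eqref{decaimento} to estimate the cross/interaction terms $\iint_{(\R^{2N}\setminus\Omega^2)^c}$ and the $\int_\Omega$ terms, all of which are $O(|y|^{-2(N+2s)})$; (iii) isolate the genuinely negative leading term $-\tfrac{\tilde Q}{p+1} t_y^{p+1}\int_\Omega \tilde u(x-y)^{p+1}dx$, wait — this is of order $|y|^{-(N+2s)(p+1)}$, which is SMALLER; the dominant negative term is actually $-\tfrac12 \cdot \tfrac12\iint$ correction which... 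I would track signs carefully and conclude $\sup_t I(tw_y) = c_\infty - (\text{positive}) + o(\cdot)$. Combined with $c_1 \le \sup_t I(t w_y)$ this yields $c_1 < c_\infty$.
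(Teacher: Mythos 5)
Your overall strategy -- translate the limit ground state $\tilde u$ far from $\Omega$, view the translate $w_y = \tilde u(\cdot - y)$ as a competitor in the minimax characterization \eqref{G02}, and compare $I$ with $I_\infty$ -- is exactly the paper's strategy, and you correctly identify the two error terms that arise: the quadratic contribution over $\Omega$ (which lowers the energy) and the nonlinear contribution over $\Omega$ (which raises it). Your treatment of $c_1 > 0$ is also fine.

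Where you diverge from the paper is in how the ratio of these two errors is controlled, and here your route is genuinely different. The paper never invokes the decay estimate \eqref{decaimento} in this proof. Writing $c_1 \leq I(\gamma_n u_n) = I_\infty(\gamma_n u_n) - \frac{\gamma_n^2}{2}t_n + s_n$, with $t_n$ the quadratic form of $u_n$ restricted to $\Omega$ and $s_n \leq \frac{\tilde Q\gamma_n^{p+1}}{p+1}\int_\Omega |u_n|^{p+1}$, it simply applies the Sobolev embedding $H^s(\Omega)\hookrightarrow L^{p+1}(\Omega)$ on the \emph{bounded} set $\Omega$ to get $\int_\Omega |u_n|^{p+1}\leq C\,t_n^{(p+1)/2}$, hence $s_n/t_n \leq C\,t_n^{(p-1)/2}\to 0$. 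Only $\tilde u\in H^s(\R^N)$ (so $t_n\to 0$) and the scaling exponent $(p+1)/2>1$ are used. Your version instead pins down the two orders separately from \eqref{decaimento}: the lower bound gives $t_n \geq \int_\Omega |u_n|^2 \gtrsim |y|^{-2(N+2s)}$ and the upper bound gives $s_n \lesssim |y|^{-(N+2s)(p+1)}$, so $s_n/t_n \lesssim |y|^{-(N+2s)(p-1)}\to 0$. That does work, and it is a legitimate alternative; the trade-off is that you need both sides of \eqref{decaimento}, whereas the paper's Sobolev trick needs no decay information at all and is therefore more robust.

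That said, your write-up as it stands does not constitute a proof: you repeatedly go back and forth ("wait --- this is of order \dots which is SMALLER; the dominant negative term is actually \dots"), trail off without finishing the estimate, and at one point assert that the cross/interaction quadratic terms are "all $O(|y|^{-2(N+2s)})$", which is an upper bound on the Gagliardo seminorm over $\Omega\times\Omega$ that does not follow from \eqref{decaimento} alone (it would need gradient decay of $\tilde u$). Luckily you only need a \emph{lower} bound on $t_n$, not an upper bound, so the assertion is unnecessary rather than fatal --- but the confusion signals that you never isolated the clean formulation $c_1 \leq c_\infty + t_n\bigl(-\frac{\gamma_n^2}{2} + \frac{s_n}{t_n}\bigr)$, which makes the whole argument a one-liner once $s_n/t_n\to 0$ and $\gamma_n\to 1$ are established. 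I would tighten the argument around that identity and carry out the ratio estimate explicitly (by either route) before calling this complete.
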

\begin{proof}
Let $\tilde{u}$ be a ground state solution of $(P_\infty)$ and define $u_n(x) = \tilde{u}(x-\sigma_n)$, with $\sigma_n=(n,0,....,0) \in \R^N$. By (\ref{G02}), 
\begin{equation}\label{G04}
c_1\leq \max_{t\geq 0}I(tu_n).
\end{equation}
 Now, for every $t>0$ consider the function 
$$
h (t) = \frac{t^2}{2}\|u_n\|_{H_{\tilde{\Omega}}^{s}}^{2} - \frac{t^{p+1}}{p+1}\int_{\R^N \setminus \Omega} Q(x)|u_n|^{p+1}dx.
$$ 
A simple computation implies $h (0) = 0$, $h (t) < 0$ for $t$ small enough, and $h (t)>0$ for $t$ large enough. Accordingly, there is a unique $\gamma_n \in (0, \infty)$ such that
\begin{equation}\label{G05}
h (\gamma_n) = I(\gamma_n u_n) = \max_{t\geq 0} I(tu_n). 
\end{equation}
Thus $h' (\gamma_n) = 0$, which is equivalent to 
\begin{equation}\label{G06}
\frac{1}{2}\iint_{\R^{2N}\setminus \Omega^2}\frac{|u_n(x) - u_n(y)|^2}{|x-y|^{N+2s}}dy dx +\int_{\R^N\setminus \Omega}|u_n|^2dx = \gamma_{n}^{p-1}\int_{\tilde{\Omega}}Q(x)|u_n|^{p+1}dx.
\end{equation}
From definition of $c_1$, 
\begin{equation}\label{G07}
\begin{aligned}
c_1&\leq I(\gamma_n u_n) \\
& = I_\infty (\gamma_n u_n) - \frac{\gamma_n^2}{2}\left(\frac{1}{2}\iint_{\Omega \times \Omega}\frac{|u_n(x) - u_n(y)|^2}{|x-y|^{N+2s}}dy dx + \int_{\Omega}|u_n|^2 dx \right)\\
&+\frac{\gamma_{n}^{p+1}}{p+1}\int_{\R^N \setminus \Omega}(\tilde{Q} - Q(x))|u_n|^{p+1}dx + \frac{\gamma_{n}^{p+1}}{p+1}\int_{\Omega}\tilde{Q}|u_n|^{p+1}dx\\
&= I_\infty (\gamma_n u_n) - \frac{t_n\gamma_{n}^2}{2} + \frac{\gamma_n^{p+1}}{p+1}\int_{\Omega}\tilde{Q}|u_n|^{p+1}dx + \frac{\gamma_n^{p+1}}{p+1}\int_{\R^N \setminus \Omega} (\tilde{Q} - Q(x))|u_n|^{p+1}dx,
\end{aligned}
\end{equation}
where
$$
t_n = \frac{1}{2}\iint_{\Omega \times \Omega}\frac{|u_n(x) - u_n(y)|^2}{|x-y|^{N+2s}}dy dx + \int_{\Omega}|u_n|^2 dx.
$$

Note that $(\gamma_n)$ is bounded, because by (\ref{G06}),   
$$
0 \leq \gamma_{n}^{p-1} =\frac{\|u_n\|_{H_{\tilde{\Omega}}^{s}}^2}{\int_{\R^N \setminus \Omega}Q(x) |u_n|^{p+1}dx} \to \frac{\|\tilde{u}\|_{s}^{2}}{ \int_{\R^N}\tilde{Q}|\tilde{u}|^{p+1}dx}\;\;\mbox{as}\;\;n\to \infty.
$$
Thereby,  up to a subsequence, $\gamma_n \to \gamma_0$. We claim that $\gamma_0 = 1$. In fact, making the change of variable $\tilde{x} = x-\sigma_n$ and $\tilde{y} = y-\sigma_n$, we ascertain 
$$
\begin{aligned}
\|u_n\|_{H_{\tilde{\Omega}}^{s}}^{2} 
&= \iint_{\R^N\times \R^N} \chi_{\R^{2N}\setminus \Omega^2}(x,y)\frac{|\tilde{u}(x-\sigma_n) - \tilde{u}(y-\sigma_n)|^2}{|x-y|^{N+2s}}dy dx + \int_{\R^N} \chi_{\R^N \setminus \Omega} (x)\tilde{u}^2(x-\sigma_n)dx\\
&=\iint_{\R^N \times \R^N} \chi_{\R^{2N}\setminus \Omega^2}(x+\sigma_n, y + \sigma_n)\frac{|\tilde{u}(x) - \tilde{u}(y)|^2}{|x-y|^{N+2s}}dy dx + \int_{\R^N} \chi_{\R^N \setminus \Omega}(x+\sigma_n)\tilde{u}^2(x)dx.
\end{aligned}
$$
Now, since $|\sigma_n|\to \infty$, 
$$
\begin{aligned}
\chi_{\R^{2N}\setminus \Omega^2}(x+\sigma_n, y+\sigma_n)\frac{|\tilde{u}(x) - \tilde{u}(y)|^2}{|x-y|^{N+2s}}  \to \frac{|\tilde{u}(x) - \tilde{u}(y)|^2}{|x-y|^{N+2s}}  \;\;\mbox{a.e.}\;\; (x,y)\in \R^N \times \R^N
\end{aligned}
$$
and 
$$
\chi_{\R^N \setminus \Omega}(x+\sigma_n)|\tilde{u}(x)|^2 \to |\tilde{u}(x)|^2\;\;\mbox{a.e.} \;\;x\in \R^N.
$$
Furthermore,
$$
\left| \chi_{\R^{2N}\setminus \Omega^2}(x+\sigma_n, y+\sigma_n)\frac{|\tilde{u}(x) - \tilde{u}(y)|^2}{|x-y|^{N+2s}} \right| \leq \frac{|\tilde{u}(x) - \tilde{u}(y)|^2}{|x-y|^{N+2s}} \in L^1(\R^N \times \R^N) 
$$
and
$$
\left| \chi_{\R^N \setminus \Omega}(x+\sigma_n)\tilde{u}^2(x)\right| \leq  |\tilde{u}(x)|^2 \in L^1(\R^N).
$$
So, by Lebesgue's theorem,
\begin{equation}\label{G08}
\|u_n\|_{H_{\tilde{\Omega}}^{s}}^{2} \to \|\tilde{u}\|_{s} \;\;\mbox{as}\;\: n \to \infty.
\end{equation}
On the other hand, 
$$
\begin{aligned}
\gamma_{n}^{p-1}\int_{\R^N \setminus \Omega}Q(x)|u_n|^{p+1}dx &= \gamma_{n}^{p-1}\int_{\R^N}\chi_{\R^N \setminus \Omega}(x)Q(x)|\tilde{u}(x-\sigma_n)|^{p+1}dx\\
& = \gamma_{n}^{p-1}\int_{\R^N}\chi_{\R^N \setminus \Omega} (x+ \sigma_n)Q(x+\sigma_n)|\tilde{u}(x)|^{p+1}dx.
\end{aligned}
$$ 
By condition $(Q_1)$, 
$$
\gamma_{n}^{p-1}\chi_{\R^N \setminus \Omega}(x+\sigma_n)Q(x+\sigma_n)|\tilde{u}(x)|^{p+1} \to \gamma_{0}^{p-1}\tilde{Q}|\tilde{u}(x)|^{p+1}\;\;\mbox{a.e.}\;\;x\in \R^N.
$$
Also, since $Q$ and $(\gamma_n)$ are bounded, 
$$
|\chi_{\R^N \setminus \Omega}(x+\sigma_n)Q(x+\sigma_n)|\tilde{u}(x)|^{p+1}| \leq \tilde{B}|\tilde{u}(x)|^{p+1}\in L^1(\R^N).
$$
Thus, by Lebesgue's theorem 
\begin{equation}\label{G09}
\gamma_{n}^{p-1}\int_{\R^N\setminus \Omega}Q(x)|u_n(x)|^{p+1}dx \to \gamma_{0}^{p-1}\int_{\R^N}\tilde{Q}|\tilde{u}(x)|^{p+1}dx.
\end{equation}
As $\tilde{u}$ is a solution of $(P_\infty)$, the limits (\ref{G08}) and (\ref{G09}) together with the uniqueness of the limit yield $\gamma_0=1$. Accordingly, by (\ref{G07}), 

\begin{equation}\label{G10}
c_1\leq I_\infty (\tilde{u}) - \frac{t_n\gamma_n^2}{2} + s_n = c_\infty - \frac{ t_n\gamma_n^2}{2} + s_n=c_\infty + t_n\left(-\frac{ \gamma_n^2}{2} + \frac{s_n}{t_n}\right),
\end{equation}
where 
$$
s_n = \frac{\gamma_{n}^{p+1}}{p+1}\left(\int_{\Omega}\tilde{Q}|u_n(x)|^{p+1} + \int_{\R^N \setminus \Omega}(\tilde{Q} - Q(x))|u_n(x)|^{p+1}dx \right).
$$
We claim that 
$$
\frac{s_n}{t_n} \to 0\;\;\mbox{as}\;\;n \to +\infty,
$$
which is enough to confirm that $c_1 < c_\infty$.

To verifies this assertion, first of all, note that $\tilde{u}\in H^s(\R^N)$ yields $t_n \to 0$ as $n \to +\infty$. On the other hand, according to  $(Q_1)$, $\tilde{Q} - Q(x) \leq 0$ for all $x\in \R^N \setminus \Omega$, then 
\begin{equation}\label{G11}
\begin{aligned}
s_n &= \frac{\gamma_n^{p+1}}{p+1}\left(\tilde{Q}\int_{\Omega}|u_n(x)|^{p+1}dx + \int_{\R^N\setminus \Omega}(\tilde{Q} - Q(x))|u_n(x)|^{p+1}dx\right)\\
&\leq \frac{\tilde{Q}\gamma_n^{p+1}}{p+1}\int_{\Omega}|u_n(x)|^{p+1}dx.
\end{aligned}
\end{equation}
This inequality combines with Sobolev embedding and (\ref{G11}) to give
$$
\begin{aligned}
\frac{s_n}{t_n} & \leq \frac{\frac{\tilde{Q}\gamma_n^{p+1}}{p+1}\int_{\Omega}|u_n(x)|^{p+1}dx}{\frac{1}{2}\iint_{\Omega \times \Omega}\frac{|u_n(x) - u_n(y)|^2}{|x-y|^{N+2s}}dy dx + \int_{\Omega}|u_n|^2dx} \\
&\leq \frac{C_{p+1}^{p+1}\tilde{Q}\gamma_{n}^{p+1}}{(p+1)} \left( \frac{1}{2}\iint_{\Omega \times \Omega}\frac{|u_n(x) - u_n(y)|^2}{|x-y|^{N+2s}}dy dx + \int_{\Omega}|u_n|^2dx\right)^{p-1}\\
& \leq \tilde{C}\gamma_{n}^{p+1}t_n^{\frac{p-1}{2}} \to 0\;\;\mbox{as}\;\;n \to +\infty, 
\end{aligned}
$$
as asserted.
\end{proof}

\noindent 
{\bf Proof of Theorem \ref{ground}.} From (\ref{G01}), there exists a sequence $(u_n) \subset H_{\tilde{\Omega}}^{s}$ such that 
$$
I(u_n)\to c_1\;\;\mbox{and}\;\;I'(u_n)\to 0\;\;\mbox{as}\;\;n\to \infty.
$$   
Since $(u_n)$ is bounded, for some subsequence, there exists $u\in H_{\tilde{\Omega}}^{s}$, such that $u_n \rightharpoonup u$ in $H_{\tilde{\Omega}}^{s}$ and  $I'(u)=0$. Now, we are going to show that $u\not=0$.  Supposing by contradiction that $u=0$. Since $c_1>0$, the Proposition \ref{GC-C} ensures the existence of $r,\beta>0$ and $(y_n) \subset  \tilde{\Omega}$ with $|y_n|\to \infty$ as $n\to \infty$ such that
$$
\int_{B_r(y_n) \cap \tilde{\Omega}}|u_n|^{2}\,dx \geq \beta, \quad \forall n \in \mathbb{N}.
$$
Then,  for each $T>0$ fixed, there is $n_0=n_0(T) \in \mathbb{N}$ such that  
$$
B(0,T) \subset \R^N \setminus (\Omega - y_n), \quad \forall n \geq n_0.
$$ 
Setting $v_n(x)=u_n(x+y_n)$, we have that for some subsequence, $(v_n)$ is bounded in  $H^{s}(B(0,T))$ for all $T>0$. Indeed, as $(u_n)$ is bounded in $H_{\tilde{\Omega}}^{s}$,  there exists a positive constant $M$ such that 
$$
\begin{aligned}
M & \geq  \iint_{\R^{2N} \setminus \Omega^2} \frac{|u_n(x) - u_n(y)|^2}{|x-y|^{N+2s}}dy dx + \int_{\R^N \setminus \Omega}|u_n|^2dx\\
&= \iint_{\R^{2N} \setminus (\Omega - y_n)^2 } \frac{|v_n(x) - v_n(y)|^2}{|x-y|^{N+2s}}dy dx + \int_{\R^N \setminus (\Omega - y_n)}|v_n|^2dx \\
&\geq \iint_{B(0,T)\times B(0,T)} \frac{|v_n(x) - v_n(y)|^2}{|x-y|^{N+2s}}dy dx + \int_{B(0,T)}|v_n|^2dx=\|v_n\|_{H^s(B(0,T))}^{2}.
\end{aligned}
$$  
From this, there is a subsequence of $(v_n)$, still denoted by itself, and $v\in H_{loc}^{s}(\mathbb{R}^N)$ such that 
$$
v_n \rightharpoonup  v \;\;\mbox{in $H^s(B(0,T))$, as $n\to \infty$}.
$$
Then, by the lower semicontinuity of the norm  
$$
\|v\|_{H^s(B(0,T))}\leq \liminf_{n\to \infty}\|v_n\|_{H^s(B(0,T))} \leq M,\quad \forall T>0,
$$ 
from where it follows that $v \in H^{s}(\mathbb{R}^N)$.  

Now, let $\psi \in H^{s}(\mathbb{R}^N)$ be a test function with bounded support. As $I'(u_{n}) = o_n(1)$, we have  
\begin{equation}\label{G19}
I'(u_n)\psi(.-y_{n}) = o_n(1).
\end{equation}
Hence, 
\begin{equation}\label{G20}
\begin{aligned}
&\frac{1}{2}\iint_{\R^{2N}\setminus (\Omega-y_n)^2} \frac{[v_{n}(x) - v_{n}(y)][\psi(x) - \psi(y)]}{|x-y|^{N+2s}}dy dx + \int_{\R^N \setminus (\Omega -y_{n})} v_{n}(x)\psi(x)dx \\
&\hspace{6cm}= \int_{\R^{N}\setminus (\Omega - y_{n})} Q(x + y_{n})|v_{n}(x)|^{p-1}v_{n}(x)\psi(x)dx.
\end{aligned}
\end{equation} 
By the weak convergence of $(v_n)$ to $v$ in $H^s(B(0,T))$, we discover   
\begin{equation}\label{G21}
\begin{aligned}
\frac{1}{2}\iint_{\R^{2N}\setminus (\Omega-y_n)^2} &\frac{[v_{n}(x) - v_{n}(y)][\psi(x) - \psi(y)]}{|x-y|^{N+2s}}dy dx + \int_{\R^N \setminus (\Omega -y_{n})} v_{n}(x)\psi(x)dx \\
&\to \frac{1}{2} \iint_{\R^{2N}} \frac{[v(x) - v(y)][\psi(x) - \psi(y)]}{|x-y|^{N+2s}}dydx + \int_{\R^N} v(x)\psi(x)dx.
\end{aligned}
\end{equation}
On the other hand, the limit $|y_n| \to +\infty$ implies 
$$
Q(x+y_{n}) \to \tilde{Q}\;\;\mbox{a.e. $x\in \R^N$, as}\;\;n \to +\infty,
$$
and so,
$$
Q(x+y_{n})|v_{n}(x)|^{p-1}v_{n}(x) \to \tilde{Q}|v(x)|^{p-1}v(x)\;\;\mbox{a.e. $x\in \R^N$, as $n\to +\infty$}.
$$
This limit combined with the boundedness of $(v_{n})$ in $L^{p+1}(\R^N \setminus \Omega)$ permit to apply  
\cite[Lemma 4.6]{Kavian} to get 
\begin{equation}\label{G22}
\int_{B(0,T)} Q(x+y_{n})|v_{n}(x)|^{p-1}v_{n}(x)\psi(x)dx \to \int_{B(0,T)}\tilde{Q}|v(x)|^{p-1}v(x)\psi(x)dx.
\end{equation}
Therefore, from (\ref{G21})-(\ref{G22}), 
$$
I'_\infty(v)\psi = 0.
$$
Now, by density, the last equality yields $v$ is a nontrivial solution of $(P_\infty)$. Invoking Fatou's lemma, we compute   
$$
\begin{aligned}
c_\infty &\leq I_\infty (v) - \frac{1}{2}I'_\infty(v)v = \left(\frac{1}{2} - \frac{1}{p+1}\right) \int_{\R^N}\tilde{Q}|v(x)|^{p+1}dx\\
&\leq \liminf_{n\to \infty} \left(\frac{1}{2} - \frac{1}{p+1}\right) \int_{\R^N \setminus (\Omega - y_n)} Q(x+y_n)|v_n(x)|^{p+1}dx\\
&= \liminf_{n\to \infty} \left(\frac{1}{2} - \frac{1}{p+1}\right)\int_{\R^N \setminus \Omega}Q(x)|u_n(x)|^{p+1}dx \\
&= \limsup_{n\to \infty} \left(I(u_n) - \frac{1}{2}I'(u_n)u_n \right) =c_1,
\end{aligned}
$$
contrary to Proposition \ref{Gres02}. This confirm $u \not=0$. As $I$ is an even functional, it follows from (\ref{c1}) that any ground state solution $u$ of $I$ does not change sing, otherwise it is possible to show that $I(u) \geq 2c_1 $, which is absurd. This shows the existence of a non-negative ground state solution for $(P)$.

\section{Proof of Theorem \ref{nodal}}

In this section we are going to prove Theorem \ref{nodal}. We introduce the nodal set 
$$
\mathcal{M} = \{u\in \mathcal{N}:\;u^{\pm}\not \equiv 0, \;\;I'(u)u^{+} = I'(u)u^{-} = 0\}
$$
and consider the following real number 
$$
c = \inf_{u\in \mathcal{M}}I(u).
$$
Let us point out that for all $u\in H_{\tilde{\Omega}}^{s}$, 
\begin{equation}\label{N01}
[ u ]_{H_{\tilde{\Omega}}^{s}}^{2} = [u^+]_{H_{\tilde{\Omega}}^{s}}^{2} + [u^-]_{H_{\tilde{\Omega}}^{s}}^{2}- \frac{1}{2}\iint_{\R^{2N}\setminus \Omega^2}\frac{u^+(x)u^-(y) + u^-(x)u^+(y)}{|x-y|^{N+2s}}dy dx,
\end{equation}
where
$$
[u]_{H_{\tilde{\Omega}}^{s}}^{2} = \frac{1}{2}\iint_{\R^{2N}\setminus \Omega^2}\frac{|u(x) - u(y)|^2}{|x-y|^{N+2s}}dy dx.
$$
So
\begin{equation}\label{N02}
I(u) = I(u^+) + I(u^-) - \frac{1}{4}\iint_{\R^{2N}\setminus \Omega^2}\frac{u^+(x)u^-(y) + u^-(x)u^+(y)}{|x-y|^{N+2s}}dy dx,
\end{equation}
\begin{equation}\label{N03}
I'(u)u^+ = I'(u^+)u^+ - \frac{1}{2}\iint_{\R^{2N}\setminus \Omega^2}\frac{u^+(x)u^-(y) + u^-(x)u^+(y)}{|x-y|^{N+2s}}dy dx
\end{equation}
and 
\begin{equation}\label{N04}
I'(u)u^- = I'(u^-)u^- - \frac{1}{2}\iint_{\R^{2N}\setminus \Omega^2}\frac{u^+(x)u^-(y) + u^-(x)u^+(y)}{|x-y|^{N+2s}}dy dx.
\end{equation}
Recalling that 
$$
\iint_{\R^{2N}\setminus \Omega^2}\frac{u^+(x)u^-(y) + u^-(x)u^+(y)}{|x-y|^{N+2s}}dy dx \leq 0,  \quad \forall u \in H_{\tilde{\Omega}}^{s},
$$
it follows that
\begin{equation}\label{N05}
I'(u^{\pm})u^{\pm} \leq 0, \quad \forall u \in \mathcal{M}.
\end{equation}
\begin{lemma}\label{Nlm01}
There exists $\rho >0$ such that 
\begin{enumerate}
\item[$(i)$] $\|u^{\pm}\|_{H_{\tilde{\Omega}}^{s}} \geq \rho$ for all $u \in \mathcal{M}$.
\item[$(ii)$] $I(u) >0 $ and $\|u\|_{H_{\tilde{\Omega}}^{s}} \geq \rho$ for all $u\in \mathcal{N}$.
\item[$(iii)$] $c = \displaystyle \inf_{u\in \mathcal{M}}I(u)>0$.
\end{enumerate}
\end{lemma}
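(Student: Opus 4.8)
The plan is to establish the three items roughly in the order $(i) \Rightarrow (ii) \Rightarrow (iii)$, where the central tool is the lower bound coming from the mountain pass geometry established for $I$ together with the sign information in \eqref{N05}.

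\emph{Step 1: the bound $\|u^\pm\|_{H_{\tilde\Omega}^s} \geq \rho$.} Let $u \in \mathcal{M}$. By \eqref{N05} we have $I'(u^\pm)u^\pm \leq 0$, that is,
$$
\|u^\pm\|_{H_{\tilde\Omega}^s}^2 \leq \int_{\R^N\setminus\Omega} Q(x)|u^\pm|^{p+1}\,dx \leq \|Q\|_\infty \int_{\R^N\setminus\Omega}|u^\pm|^{p+1}\,dx.
$$
Applying the continuous Sobolev embedding $H_{\tilde\Omega}^s \hookrightarrow L^{p+1}(\R^N\setminus\Omega)$ (valid since $p+1 \in (2,2_s^*)$, cf. Lemma \ref{Pnta01}), the right-hand side is bounded by $C\|u^\pm\|_{H_{\tilde\Omega}^s}^{p+1}$. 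Since $u^\pm \not\equiv 0$, we may divide by $\|u^\pm\|_{H_{\tilde\Omega}^s}^2 > 0$ to get $1 \leq C\|u^\pm\|_{H_{\tilde\Omega}^s}^{p-1}$, which yields $\|u^\pm\|_{H_{\tilde\Omega}^s} \geq \rho := C^{-1/(p-1)} > 0$; this $\rho$ is uniform over $\mathcal{M}$.

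\emph{Step 2: positivity and the bound on $\mathcal{N}$.} For $u \in \mathcal{N}$ one has $I'(u)u = 0$, i.e. $\|u\|_{H_{\tilde\Omega}^s}^2 = \int Q|u|^{p+1}$, and the same Sobolev argument as in Step 1 (now with $u$ in place of $u^\pm$, using $u \neq 0$) gives $\|u\|_{H_{\tilde\Omega}^s} \geq \rho$, possibly after enlarging $\rho$; we take the smaller of the two constants so that both Step 1 and Step 2 hold with the same $\rho$. For the strict positivity of $I$ on $\mathcal{N}$, substitute the Nehari identity into $I(u)$:
$$
I(u) = I(u) - \tfrac{1}{p+1}I'(u)u = \left(\tfrac12 - \tfrac{1}{p+1}\right)\|u\|_{H_{\tilde\Omega}^s}^2 \geq \left(\tfrac12 - \tfrac{1}{p+1}\right)\rho^2 > 0,
$$
since $p > 1$ forces $\tfrac12 - \tfrac1{p+1} > 0$. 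This proves $(ii)$.

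\emph{Step 3: $c > 0$.} Let $u \in \mathcal{M} \subset \mathcal{N}$. From \eqref{N02} and the fact that the cross term $\iint \frac{u^+(x)u^-(y)+u^-(x)u^+(y)}{|x-y|^{N+2s}}\,dy\,dx \leq 0$, we get $I(u) \geq I(u^+) + I(u^-)$. It therefore suffices to bound $I(u^\pm)$ from below by a positive constant. From \eqref{N03}, \eqref{N04} and the nonpositivity of the cross term we have $I'(u^+)u^+ \leq I'(u)u^+ = 0$, hence $\|u^+\|_{H_{\tilde\Omega}^s}^2 \leq \int Q|u^+|^{p+1}$, and writing
$$
I(u^+) = I(u^+) - \tfrac{1}{p+1}I'(u^+)u^+ = \left(\tfrac12 - \tfrac{1}{p+1}\right)\|u^+\|_{H_{\tilde\Omega}^s}^2 + \tfrac{1}{p+1}\Big(\int Q|u^+|^{p+1} - \|u^+\|_{H_{\tilde\Omega}^s}^2\Big),
$$
where the last parenthesis is $\geq 0$; combined with Step 1 this gives $I(u^+) \geq \left(\tfrac12 - \tfrac1{p+1}\right)\rho^2$, and likewise for $I(u^-)$. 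Therefore $I(u) \geq 2\left(\tfrac12 - \tfrac1{p+1}\right)\rho^2 > 0$ for every $u \in \mathcal{M}$, and taking the infimum gives $c \geq 2\left(\tfrac12 - \tfrac1{p+1}\right)\rho^2 > 0$.

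\emph{Main obstacle.} None of the steps is deep; the only point requiring care is bookkeeping with the cross term in \eqref{N01}--\eqref{N04} to deduce $I'(u^\pm)u^\pm \leq 0$ from $u \in \mathcal{M}$, and making sure the Sobolev constant and the resulting $\rho$ are chosen once and for all so that $(i)$, $(ii)$, $(iii)$ are stated with a single $\rho$. A secondary subtlety is that one must invoke the continuity of $H_{\tilde\Omega}^s \hookrightarrow L^{p+1}$ in the exterior domain $\tilde\Omega = \R^N\setminus\Omega$, rather than on a bounded set; this follows from Lemma \ref{Pnta01}(2) together with $H^s(\R^N)\hookrightarrow L^{p+1}(\R^N)$, or directly from the extension operator argument used in the proof of Proposition \ref{GC-C}.
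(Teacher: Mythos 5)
Your Steps 1 and 2 are correct and follow essentially the same route as the paper: use (\ref{N05}) together with the continuous embedding $H_{\tilde{\Omega}}^{s}\hookrightarrow L^{p+1}(\R^N\setminus\Omega)$ for part $(i)$, and the Nehari identity $I(u)=I(u)-\frac{1}{p+1}I'(u)u=\big(\frac12-\frac{1}{p+1}\big)\|u\|_{H_{\tilde{\Omega}}^{s}}^{2}$ for part $(ii)$.

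Step 3, however, contains a sign error that breaks the argument. The correct decomposition is
$$
I(u^+) = \left(\frac{1}{2}-\frac{1}{p+1}\right)\|u^+\|_{H_{\tilde{\Omega}}^{s}}^{2} + \frac{1}{p+1}\,I'(u^+)u^+,
$$
i.e. the correction term enters with a \emph{plus} sign on $I'(u^+)u^+$, not a minus as you wrote. Since $I'(u^+)u^+\leq 0$ by (\ref{N05}), this gives only the \emph{upper} bound $I(u^+)\leq\big(\frac12-\frac{1}{p+1}\big)\|u^+\|_{H_{\tilde{\Omega}}^{s}}^{2}$, not the lower bound you asserted. In fact $I(u^\pm)$ is not controlled from below by this route at all: for $u\in\mathcal{M}$ the quantity $\int_{\R^N\setminus\Omega}Q|u^\pm|^{p+1}$ can exceed $\|u^\pm\|_{H_{\tilde{\Omega}}^{s}}^{2}$ by an arbitrarily large amount (the cross term in (\ref{N03})--(\ref{N04}) can be very negative), and then $I(u^\pm)=\frac12\|u^\pm\|^2-\frac{1}{p+1}\int Q|u^\pm|^{p+1}$ is negative. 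So the inequality $I(u)\geq I(u^+)+I(u^-)$ does not by itself produce a positive lower bound for $c$.

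The fix is immediate and is exactly what the paper does: since $\mathcal{M}\subset\mathcal{N}$, part $(iii)$ follows at once from the bound proved in $(ii)$,
$$
c=\inf_{u\in\mathcal{M}}I(u)\geq\inf_{u\in\mathcal{N}}I(u)\geq\left(\frac{1}{2}-\frac{1}{p+1}\right)\rho^2>0.
$$
There is no need, and no easy way, to estimate $I(u^+)$ and $I(u^-)$ separately here.
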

\begin{proof}
$(i)$ From (\ref{N05}),  
$$
\frac{1}{2}\|u^{\pm}\|_{H_{\tilde{\Omega}}^{s}}^{2} \leq \int_{\R^N \setminus \Omega}Q(x)|u^{\pm}|^{p+1}dx
 \leq \|Q\|_{\infty}\|u^{\pm}\|_{H_{\tilde{\Omega}}^{s}}^{p+1}, \quad \forall u \in \mathcal{M}.
$$ 
Thus, there exists $\rho>0$ such that 
$$
\|u^{\pm}\|_{H_{\tilde{\Omega}}^{s}}\geq \rho >0.
$$

\noindent 
$(ii)$  From $(i)$, $\|u\|_{H_{\tilde{\Omega}}^{s}}\geq \rho >0$  for any $u\in \mathcal{N}$. Hence by equality $I'(u)u=0$,
\begin{equation} \label{cpositivo}
I(u) = I(u) - \frac{1}{p+1}I'(u)u = \frac{1}{2}\left(\frac{1}{2} - \frac{1}{p+1} \right){\|u\|_{H_{\tilde{\Omega}}^{s}}^{2}} \geq \left(\frac{1}{2}-\frac{1}{p+1} \right)\frac{\rho^2}{2}>0.
\end{equation}
$(iii)$ An immediate consequence of (\ref{cpositivo})
\end{proof}
\begin{lemma}\label{Nlm02}
Suppose that $(Q_1)$ and $(Q_2)$ hold. Then 
\begin{equation}\label{N06}
0<c < c_1+c_\infty,
\end{equation}
for $R$ given in $(Q_2)$ large enough. 
\end{lemma}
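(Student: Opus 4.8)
The plan is to exhibit a sign--changing competitor in $\mathcal M$ obtained by gluing the non-negative ground state $u_{0}$ of $(P)$ given by Theorem \ref{ground} to a copy of the ground state $\tilde u$ of $(P_\infty)$ placed far away, precisely in the region where $(Q_{2})$ forces $Q$ to be large. With $\sigma_{R}$ as in $(Q_{2})$, set $\phi_{R}(x)=\tilde u(x-\sigma_{R})$ and $w_{R}=u_{0}-\phi_{R}$. Since $u_{0}>0$ in $\R^{N}\setminus\Omega$ with $u_{0}(x)=O(|x|^{-(N+2s)})$ (see the Appendix, Section 5) while $\phi_{R}$ is concentrated about $\sigma_{R}$ with $|\sigma_{R}|>3R$, one checks that for $R$ large $w_{R}$ changes sign, $w_{R}^{+}\not\equiv 0\not\equiv w_{R}^{-}$, and in fact
\[
w_{R}^{+}=u_{0}-\eta_{R},\qquad w_{R}^{-}=-\phi_{R}+\eta_{R},\qquad \eta_{R}:=\min\{u_{0},\phi_{R}\}\ge 0,
\]
with $\|\eta_{R}\|_{H_{\tilde\Omega}^{s}}\to 0$ as $R\to\infty$; the decay of $u_{0}$ and $\tilde u$ together with gluing estimates of the type in \cite[Lemma 5.3]{EDNGPEV} are used to control the Gagliardo seminorm of $\eta_{R}$.

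Next, as in the standard treatment of nodal Nehari sets, for $R$ large the fibering map $(s,t)\mapsto I(sw_{R}^{+}+tw_{R}^{-})$ has a maximum point $(s_{R},t_{R})\in(0,\infty)^{2}$ with $s_{R}w_{R}^{+}+t_{R}w_{R}^{-}\in\mathcal M$, and $(s_{R},t_{R})$ remains in a fixed compact subset of $(0,\infty)^{2}$. Hence, by (\ref{N02}),
\[
c\le I(s_{R}w_{R}^{+}+t_{R}w_{R}^{-})=I(s_{R}w_{R}^{+})+I(t_{R}w_{R}^{-})-\frac{s_{R}t_{R}}{4}\,\mathcal E_{R},
\]
where $\mathcal E_{R}=\iint_{\R^{2N}\setminus\Omega^{2}}\frac{w_{R}^{+}(x)w_{R}^{-}(y)+w_{R}^{-}(x)w_{R}^{+}(y)}{|x-y|^{N+2s}}\,dy\,dx\le 0$. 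Because $w_{R}^{+}$ is concentrated near $\Omega$ and $w_{R}^{-}$ near $\sigma_{R}$, the interaction $|\mathcal E_{R}|$ is bounded by a fixed negative power of $R$ and tends to $0$, so together with the boundedness of $(s_{R},t_{R})$ we get
\[
c\le \sup_{s\ge 0}I(sw_{R}^{+})+\sup_{t\ge 0}I(tw_{R}^{-})+o_{R}(1).
\]

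To conclude, note that $w_{R}^{+}\to u_{0}$ in $H_{\tilde\Omega}^{s}$ with $u_{0}\in\mathcal N$, so by continuity of the fibering maximum $\sup_{s\ge 0}I(sw_{R}^{+})\to\sup_{s\ge 0}I(su_{0})=I(u_{0})=c_{1}$. For the second term, $I$ is even and $w_{R}^{-}\to-\phi_{R}$ with the relevant maximizers bounded, so it suffices to estimate
\[
\sup_{t\ge 0}I(t\phi_{R})=\Bigl(\frac12-\frac1{p+1}\Bigr)\,\|\phi_{R}\|_{H_{\tilde\Omega}^{s}}^{2(p+1)/(p-1)}\Bigl(\int_{\R^{N}\setminus\Omega}Q|\phi_{R}|^{p+1}dx\Bigr)^{-2/(p-1)} .
\]
Here $\|\phi_{R}\|_{H_{\tilde\Omega}^{s}}\le\|\tilde u\|_{s}$ is bounded, whereas $(Q_{1})$ gives $Q\ge\tilde Q$ on $\R^{N}\setminus\Omega$ and $(Q_{2})$ gives, after the change of variables $y=x-\sigma_{R}$ and the lower bound in (\ref{decaimento}),
\[
\int_{\R^{N}\setminus\Omega}Q|\phi_{R}|^{p+1}dx\;\ge\;CR^{\gamma}\int_{B(0,2R)\setminus B(0,R)}\tilde u(y)^{p+1}dy\;\ge\;c\,R^{\gamma+N-(N+2s)(p+1)}\;\to\;+\infty ,
\]
because $\gamma>(N+2s)(p+1)-N$. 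Hence $\sup_{t\ge 0}I(t\phi_{R})\to 0$, so $c\le c_{1}+o_{R}(1)$; since $c_{\infty}>0$, there is $R_{0}>0$ such that $c<c_{1}+c_{\infty}$ for every $R\ge R_{0}$, while $c>0$ is exactly Lemma \ref{Nlm01}$(iii)$.

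The genuinely delicate point is the gluing analysis: in the nonlocal framework the Gagliardo seminorm does not localize, so one must carefully bound $\|\eta_{R}\|_{H_{\tilde\Omega}^{s}}$, the cross interaction $\mathcal E_{R}$, and verify that $(s_{R},t_{R})$ stays in a compact set; here the polynomial decay (\ref{decaimento}) of $\tilde u$, the decay of $u_{0}$ from Section 5, and the separation $|\sigma_{R}|>3R$ are exactly what make all these quantities $o_{R}(1)$. The restriction $\gamma>(N+2s)(p+1)-N$ plays a separate, equally crucial role: via (\ref{decaimento}) it is precisely the condition under which the $(Q_{2})$–bump forces $\int_{\R^{N}\setminus\Omega}Q|\phi_{R}|^{p+1}dx\to\infty$, hence $\sup_{t\ge 0}I(t\phi_{R})\to 0$, which is what improves the trivial bound $c\le c_{1}+c_{\infty}+o_{R}(1)$ to $c\le c_{1}+o_{R}(1)<c_{1}+c_{\infty}$.
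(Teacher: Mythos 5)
You take a genuinely different route from the paper. The paper keeps the competitor $w_\sigma=\alpha u_1-\tau\tilde u_\sigma$ intact on the box $(\alpha,\tau)\in[1/2,2]^2$ (Miranda), uses the sign of $u_1,\tilde u_\sigma$ to discard the Gagliardo cross term, and applies the scalar inequality $|t-s|^{p+1}\ge t^{p+1}+s^{p+1}-C(t^ps+ts^p)$; then $(Q_2)$ enters through the single subtracted term $-\frac{\gamma_n^{p+1}}{p+1}\int(Q-\tilde Q)|\tilde u_\sigma|^{p+1}\le-\tilde C_1R^{N+\gamma-(N+2s)(p+1)}$, whose size must merely exceed the $R$-independent error $C_D$ controlled in the Appendix (Corollary \ref{KS}). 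You instead split $w_R=w_R^++w_R^-$, project onto $\mathcal M$, bound by the two fibering maxima, and use $(Q_2)$ to make $\sup_t I(t\phi_R)$ tend to $0$, aiming at the stronger bound $c\le c_1+o_R(1)$.

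There is, however, a gap in the way the estimates are organized. Your pivotal claim $\sup_{t\ge0}I(t\phi_R)\to0$ collides with Lemma \ref{Nlm01}(ii): the fibering maximizer $t^*\phi_R$ lies in $\mathcal N$, so $\sup_t I(t\phi_R)=I(t^*\phi_R)\ge\bigl(\tfrac12-\tfrac1{p+1}\bigr)\tfrac{\rho^2}{2}>0$. The only way this lower bound can degenerate is through $\rho=(2\|Q\|_\infty)^{-1/(p-1)}$, and indeed under $(Q_2)$ one has $\|Q\|_\infty\ge\tilde Q+CR^\gamma$, so $\rho\to0$. But this means that $\rho$, $c_1$, $u_0$, and even the Nehari projection itself all depend on $R$, and none of your ``$o_R(1)$'' statements are established uniformly in these quantities. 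In particular the intermediate assertion ``$(s_R,t_R)$ remains in a fixed compact subset of $(0,\infty)^2$'' cannot be true here: the Nehari identity $t_R\|w_R^-\|^2+\tfrac{s_R}{2}|\mathcal E_R|=t_R^p\int Q|w_R^-|^{p+1}$ forces $t_R\to0$ once $\int Q|\phi_R|^{p+1}\to\infty$, which is incompatible with a fixed lower bound $\|t_Rw_R^-\|\ge\rho$ unless $\rho\to0$. Similarly, ``$\sup_sI(sw_R^+)\to c_1$ by continuity of the fibering maximum'' is asserted, but both $w_R^+=u_0-\eta_R$ and $c_1=I(u_0)$ vary with $R$, and the fibering formula $\sup_sI(sw)=C\,\|w\|^{2(p+1)/(p-1)}/(\int Q|w|^{p+1})^{2/(p-1)}$ is not uniformly continuous as $\|u_0\|$ and $\int Q|u_0|^{p+1}$ move. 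The paper circumvents all of this by never invoking the fibering maxima of the pieces: the Miranda box $[1/2,2]^2$ plus the algebraic inequality deliver directly $\sup I(w_\sigma)\le c_1+c_\infty-\tilde C_1R^{N+\gamma-(N+2s)(p+1)}+C_D$ with $\tilde C_1,C_D$ verified (Corollary \ref{KS}) to be independent of $R$. To salvage your line of reasoning you would have to track quantitatively how $\rho$, $\|u_0\|$, $\int Q|u_0|^{p+1}$ and the interaction scale with $R$ and show the net effect is still $o_R(1)$ against the fixed $c_\infty$; as written, this is not done.
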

\begin{proof}
Let $\tilde{u}$ be a ground state solution of $(P_\infty)$ and $u_1$ be a ground state solution of $(P)$. For $\alpha, \tau>0$ we define $w_\sigma(x) = \alpha u_1(x) - \tau \tilde{u}_\sigma(x)$, where $\tilde{u}_\sigma(x) = \tilde{u}(x-\sigma)$ and $\sigma=\sigma_R$ was given in $(Q_2)$.  Using the function  $w_\sigma$, we set the functions 
\begin{equation}\label{N07}
h_\sigma^{\pm}(\alpha, \tau) = I'(w_{\sigma})w_{\sigma}^{\pm}. 
\end{equation}
As $I'(u_1)u_1 = 0$, then
$$
I'\left(\frac{u_1}{2}\right)\frac{u_1}{2} >0 \quad \mbox{and} \quad I'(2u_1)(2u_1) < 0.
$$
We claim that 
\begin{equation}\label{N10}
I'\left(\frac{\tilde{u}_\sigma}{2}\right)\frac{\tilde{u}_\sigma}{2}>0,\;\;\mbox{for $|\sigma|$ large enough}.
\end{equation}
In fact, note that 
$$
\begin{aligned}
I'\left(\frac{\tilde{u}_\sigma}{2}\right)\frac{\tilde{u}_\sigma}{2} = \mathcal{A} - \mathcal{B},
\end{aligned}
$$ 
where
$$
\begin{aligned}
&\mathcal{A} = \frac{1}{2}\iint_{\R^{2N}} \frac{|(\tilde{u}_\sigma/2)(x) - (\tilde{u}_\sigma/2)(y)|^2}{|x-y|^{N+2s}}dy dx + \int_{\R^N} \left|\frac{\tilde{u}_\sigma}{2}\right|^2dx - \int_{\R^N}Q(x) \left|\frac{\tilde{u}_\sigma}{2}\right|^{p+1}dx,\;\;\mbox{and}\\
&\mathcal{B} = \frac{1}{2}\iint_{\Omega \times \Omega} \frac{|(\tilde{u}_\sigma/2)(x) - (\tilde{u}_\sigma/2)(y)|^2}{|x-y|^{N+2s}}dy dx + \int_{\Omega}\left| \frac{\tilde{u}_\sigma}{2}\right|^2dx - \int_{\Omega}Q(x)\left|\frac{\tilde{u}_\sigma}{2}\right|^{p+1}dx
\end{aligned}
$$
By $(Q_1)$ and Lebesgue's theorem, 
$$
\mathcal{A} \to I'_\infty \left(\frac{\tilde{u}}{2}\right)\frac{\tilde{u}}{2} >0\;\;\quad \mbox{and} \quad \mathcal{B} \to 0 \quad  \mbox{as}\;\;|\sigma| \to +\infty,
$$
proving (\ref{N10}). Now we claim that there exists $\sigma_0>0$ large enough such that 
\begin{equation}\label{N11}
h_\sigma^+({1}/{2}, \tau)>0 \quad \mbox{and} \quad h_\sigma^+(2, \tau)<0, \quad \mbox{for} \quad  |\sigma|\geq \sigma_0 \quad \mbox{and} \quad \tau \in [{1}/{2}, 2].
\end{equation}
In the same way, 
\begin{equation}\label{N12}
h_\sigma^-(\alpha, {1}/{2})>0 \quad \mbox{and} \quad h_\sigma^-(\alpha, 2)<0, \quad \mbox{for} \quad  |\sigma|\geq \sigma_0 \quad \mbox{and} \quad \alpha \in [{1}/{2}, 2].
\end{equation}
In fact, note that 
$$
\begin{aligned}
h_{\sigma}({1}/{2}, \tau) &= I'(w_\sigma)w_{\sigma}^{+} \\
&=\frac{1}{2}\iint_{\R^{2N}\setminus \Omega^2} \frac{[w_\sigma(x)-w_\sigma(y)][w_\sigma^+(x) - w_\sigma^+(y)]}{|x-y|^{N+2s}}dy dx + \int_{\R^N \setminus \Omega} w_\sigma(x)w_\sigma^+(x)dx \\
&- \int_{\R^N \setminus \Omega}Q(x)|w_\sigma|^{p-1}w_\sigma w_\sigma^+dx\\
&= \frac{1}{2}\iint_{\R^{2N}\setminus \Omega^2} \frac{[\frac{1}{2}u_1(x) - \frac{1}{2}u_1(y)][w_{\sigma}^{+}(x) - w_{\sigma}^{+}(y)]}{|x-y|^{N+2s}}dy dx\\ 
&- \frac{1}{2} \iint_{\R^{2N}\setminus \Omega^2} \frac{[\tau\tilde{u}(x-\sigma) - \tau\tilde{u}(y-\sigma)][w_{\sigma}^{+}(x) - w_{\sigma}^{+}(y)]}{|x-y|^{N+2s}}dy dx\\
&+ \int_{\R^N \setminus \Omega} w_\sigma(x)w_\sigma^+(x)dx - \int_{\R^N \setminus \Omega}Q(x)|w_\sigma|^{p-1}w_\sigma w_\sigma^+dx.
\end{aligned}
$$
Let 
$$
\begin{aligned}
	\Pi_1&= \frac{1}{2}\iint_{\R^{2N}\setminus \Omega^2} \frac{[\frac{1}{2}u_1(x)-\frac{1}{2}u_1(y)][w_\sigma^+(x) - w_\sigma^+(y)]}{|x-y|^{N+2s}}dy dx\\
	\Pi_2& = \frac{1}{2}\iint_{\R^{2N}\setminus \Omega^2} \frac{[\tau \tilde{u}(x-\sigma)-\tau\tilde{u}(y-\sigma)][w_\sigma^+(x) - w_\sigma^+(y)]}{|x-y|^{N+2s}}dy dx.
\end{aligned}
$$
Since 
$$
w_\sigma (x) = \frac{1}{2}u_1(x) - \tau \tilde{u}(x-\sigma) \;\;\mbox{and}\;\;w_{\sigma}^{+}(x) = \frac{1}{2}[|w_\sigma(x)| + w_\sigma(x)],
$$
we find
\begin{equation}\label{N13}
w_\sigma^+(x) \to \frac{1}{2}u_1(x)\;\;\mbox{a.e.}\;\,x\in \R^N\setminus \Omega.
\end{equation}	
As $(w_\sigma)$ is bounded in $H_{\tilde{\Omega}}^{s}$, the above limit ensures   
$$
w_\sigma^+  \rightharpoonup \frac{1}{2}u_1\;\;\mbox{in}\;\;H_{\tilde{\Omega}}^{s}, \quad \mbox{as} \quad |\sigma| \to +\infty,
$$
which implies 
\begin{equation}\label{N14}
\Pi_1 \to \frac{1}{2}\iint_{\R^{2N}\setminus \Omega^2} \frac{|\frac{1}{2}u_1(x) - \frac{1}{2}u_1(y)|^2}{|x-y|^{N+2s}}dy dx\;\;\mbox{as}\;\;|\sigma|\to +\infty.
\end{equation}
Setting $h_\sigma (x) = \frac{1}{2}u_1(x+\sigma) - \tau \tilde{u}(x)$ and making the change of variable $\tilde{x} = x-\sigma$ and $\tilde{y} = y-\sigma$, we discover 
	$$
	\Pi_2 = \frac{1}{2}\iint_{\R^{2N}\setminus (\Omega - \sigma)^2} \frac{[\tau \tilde{u}(x)-\tau\tilde{u}(y)][h_\sigma^+(x) - h_\sigma^+(y)]}{|x-y|^{N+2s}}dy dx.
	$$
	Consider 
	$$
	\mathbb{B}(0,T) = \{(x,y)\in \R^{2N}:\;\;|(x,y)|< T\},\;\;\mbox{for $T>0$}.
	$$
	Choose $|\sigma|$ large enough such that 
	$$
	\mathbb{B}(0,T) \subset \R^{2N}\setminus (\Omega - \sigma)^2.
	$$
Consequently
	$$
	\R^{2N} \setminus (\Omega - \sigma)^2 = \mathbb{B}(0,T)\cup \R^{2N}\setminus [\mathbb{B}(0,T) \cup (\Omega - \sigma)^2]
	$$
from where it follows that 
	$$
	\Pi_2 = \Pi_{2}^{1} + \Pi_{2}^{2},
	$$
with
	$$
	\Pi_{2}^{1}= \frac{1}{2}\iint_{\mathbb{B}(0,T)} \frac{[\tau \tilde{u}(x)-\tau\tilde{u}(y)][h_\sigma^+(x) - h_\sigma^+(y)]}{|x-y|^{N+2s}}dy dx
	$$
	and
	$$
	\Pi_{2}^{2}=\frac{1}{2}\iint_{\R^{2N}\setminus [\mathbb{B}(0,T) \cup (\Omega - \sigma)^2]} \frac{[\tau \tilde{u}(x)-\tau\tilde{u}(y)][h_\sigma^+(x) - h_\sigma^+(y)]}{|x-y|^{N+2s}}dy dx.
	$$
	Since $\tilde{u} \in H^s(\R^N)$, 
	\begin{equation}\label{N15}
	\begin{aligned}
	\Pi_{2}^{2}&=\iint_{\R^{2N}\setminus [\mathbb{B}(0,T) \cup (\Omega - \sigma)^2]} \frac{[\tau \tilde{u}(x)-\tau\tilde{u}(y)]}{|x-y|^{\frac{N}{2}+2s}}\frac{[h_\sigma^+(x) - h_\sigma^+(y)]}{|x-y|^{\frac{N}{2}+s}}dy dx\\
	&\leq \left( \iint_{\R^{2N}\setminus [\mathbb{B}(0,T) \cup (\Omega -\sigma)^2]} \frac{|\tau \tilde{u}(x) - \tau \tilde{u}(y)|^2}{|x-y|^{N+2s}} dy dx\right)^{1/2} \left(\iint_{\R^{2N}\setminus [\mathbb{B}(0,T) \cup (\Omega - \sigma)^2]} \frac{|h_{\sigma}^{+}(x) - h_{\sigma}^{+}(y)|^2}{|x-y|^{N+2s}}dy dx \right)^{1/2}\\
	&\leq C \left( \iint_{\R^{2N}\setminus \mathbb{B}(0,T)} \frac{|\tau \tilde{u}(x) - \tau \tilde{u}(y)|^2}{|x-y|^{N+2s}} dy dx\right)^{1/2} \\
	& \to 0\;\;\mbox{as $T\to +\infty$}
	\end{aligned}
	\end{equation}
Reasoning as in the proof (\ref{N14}),  
\begin{equation}\label{N16}
\Pi_{2}^{1}= \frac{1}{2}\iint_{\mathbb{B}(0,T)} \frac{[\tau \tilde{u}(x)-\tau\tilde{u}(y)][h_\sigma^+(x) - h_\sigma^+(y)]}{|x-y|^{N+2s}}dy dx \to 0\;\;\mbox{as}\;\;|\sigma|\to \infty.
\end{equation}
From  (\ref{N15}) and (\ref{N16}),  
\begin{equation}\label{N17}
\Pi_2 \to 0\;\;\mbox{as}\;\;|\sigma| \to \infty.
\end{equation}
Now note that 
$$
\int_{\R^N \setminus \Omega} w_\sigma(x)w_{\sigma}^+(x)dx = \int_{\R^N}\chi_{\Lambda_\sigma}(x)(w_\sigma^+)^2(x)dx,
$$ 
where $\Lambda_\sigma = \{x\in \R^N\setminus \Omega:\;\,\frac{1}{2}u_1(x) - \tau \tilde{u}(x-\sigma)>0\}$. Since $\tilde{u}(x-\sigma) \to 0$ a.e. in $\R^N$ as $|\sigma|\to \infty$ and $u_1>0$, we have 
$$
\chi_{\Lambda_\sigma}(x)\to 1\;\;\mbox{a.e. in $\R^N \setminus \Omega$}.
$$
Furthermore 
$$
\chi_{\Lambda_\sigma}(x)(w_\sigma^+)^2(x)\leq (\frac{1}{2}u_1(x))^2 \in L^1(\R^N \setminus \Omega)
$$
and
$$ 
 \chi_{\Lambda_\sigma}(x) (w_{\sigma}^+)^{2}(x) \to (\frac{1}{2}u_1(x))^2\;\;\mbox{a.e. in } \quad \R^N\setminus \Omega.
$$
Then by Lebesgue's Theorem, 
\begin{equation}\label{N18}
\int_{\R^N \setminus \Omega} w_\sigma(x)w_{\sigma}^{+}(x)dx \to \int_{\R^N \setminus \Omega} (\frac{1}{2}u_1(x))^2dx\;\;\mbox{as $|\sigma|\to \infty$}.
\end{equation} 
In the same way, we can show that 
\begin{equation}\label{N19}
\int_{\R^N \setminus \Omega}Q(x)|w_\sigma(x)|^{p-1}w_\sigma(x)w_\sigma^+(x)dx \to \int_{\R^N \setminus \Omega} Q(x)|\frac{1}{2}u_1(x)|^{p+1}dx.
\end{equation}
From (\ref{N14}) and (\ref{N17})-(\ref{N19}), 
 $$
 h_{\sigma}^{+}({1}/{2},\tau) \to I'({u_1}/{2})\frac{1}{2}u_1>0\;\;\mbox{as $|\sigma|\to \infty$}.
 $$
A similar argument shows that $h_\sigma^+(2,\tau)<0$ for $\sigma$ large and $\tau \in [{1}/{2},2]$ and (\ref{N12}). 

The estimates (\ref{N11}) and (\ref{N12}) permit to employ the Mean Value Theorem due to Miranda \cite{CM} to find $\alpha^*, \tau^*\in [{1}/{2}, 2]$, which depend on $\sigma$ such that 
$$
h_{\sigma}^{\pm}(\alpha^*, \tau^*) = 0\;\;\mbox{for any}\;\;|\sigma|\geq \sigma_0.
$$
Thus,
$$
\alpha^*u_1 - \tau^*\tilde{u}_\sigma\in \mathcal{M},\;\;\mbox{for}\;\;|\sigma|\geq \sigma_0.
$$
We would like to emphasize that the last inequality holds for $|\sigma|$ large, then by $(Q_2)$, it is enough to consider $R$ large enough to have $|\sigma|$ large enough.  

By the definition of $c$, it suffices to show that 
\begin{equation}\label{N25}
\sup_{\alpha, \tau \in [{1}/{2}, 2]}I(\alpha u_1 - \tau \tilde{u}_\sigma) < c_1 + c_\infty\;\;\mbox{for}\;\;|\sigma|\geq \sigma_0.
\end{equation}

Indeed, as $u_1$ is a non negative ground state solution of problem $(P)$, 
$$
\frac{1}{2}\iint_{\R^{2N}\setminus \Omega^2} \frac{[u_1(x) - u_1(y)][\tilde{u}_\sigma(x) - \tilde{u}_\sigma (y)]}{|x-y|^{N+2s}}dy dx + \int_{\R^N \setminus \Omega} u_1(x)\tilde{u}_\sigma (x)dx \geq 0. 
$$  
So 
$$
\begin{aligned}
\frac{1}{2}\iint_{\R^{2N}\setminus \Omega^2}& \frac{|w_\sigma(x) -w_\sigma (y)|^2}{|x-y|^{N+2s}}dy dx + \int_{\R^N \setminus \Omega} w_\sigma^2(x)dx\\
 &\leq \frac{1}{2}\iint_{\R^{2N}\setminus \Omega^2} \frac{|(\alpha u_1)(x) - (\alpha u_1)(y)|^2}{|x-y|^{N+2s}}dy dx + \int_{\R^N\setminus \Omega} (\alpha u_1)^2(x)dx \\
&+ \frac{1}{2}\iint_{\R^{2N}\setminus \Omega^2} \frac{|(\tau \tilde{u}_\sigma)(x) - (\tau \tilde{u}_\sigma)(y)|^2}{|x-y|^{N+2s}}dy dx + \int_{\R^N \setminus \Omega} (\tau \tilde{u}_\sigma)^2(x)dx.\\
\end{aligned}
$$
Furthermore, since for all $t,s \geq 0$ 
$$
|t-s|^{p+1} \geq t^{p+1} + s^{p+1} - C(t^ps + ts^p)
$$ 
for some positive constant $C$, we discover 
$$
\begin{aligned}
I(w_\sigma) 
&\leq \frac{1}{2}\left(\frac{1}{2}\iint_{\R^{2N}\setminus \Omega^2} \frac{|(\alpha u_1)(x) - (\alpha u_1)(y)|^2}{|x-y|^{N+2s}}dy dx + \int_{\R^N\setminus \Omega} (\alpha u_1)^2(x)dx\right)\\
&+\frac{1}{2}\left(\frac{1}{2}\iint_{\R^{2N}\setminus \Omega^2} \frac{|(\tau \tilde{u}_\sigma)(x) - (\tau\tilde{u}_\sigma)(y)|^2}{|x-y|^{N+2s}}dy dx + \int_{\R^N\setminus \Omega} (\tau\tilde{u}_\sigma)^2(x)dx\right)\\
&- \frac{1}{p+1}\int_{\R^N\setminus \Omega}\!\!\!\!\!Q(x) |\alpha u_1|^{p+1}dx - \frac{1}{p+1}\int_{\R^N\setminus \Omega}\!\!\!\!\!Q(x)|\tau \tilde{u}_\sigma|^{p+1} + \frac{C}{p+1}\int_{\R^N\setminus \Omega}\!\!\!\!\!Q(x)(u_1^p\tilde{u}_\sigma+ u_1\tilde{u}_{\sigma}^{p})dx\\
&= I(\alpha u_1) + I_\infty(\tau \tilde{u}) - \frac{1}{2^{p+1}(p+1)} \int_{\R^N \setminus \Omega} (Q(x) - \tilde{Q})|\tilde{u}_\sigma|^{p+1}dx + \frac{2^{p+1}}{p+1}\int_{\Omega} \tilde{Q}|\tilde{u}_\sigma(x)|^{p+1}dx\\
&+ \tilde{C}\int_{\R^N \setminus \Omega}[u_1^p\tilde{u}_\sigma + u_1\tilde{u}_{\sigma}^{p}]dx.
\end{aligned}
$$
Now, $(Q_2)$ combined with (\ref{decaimento}) ensures that 
\begin{equation}\label{N26}
\begin{aligned}
\int_{\R^N \setminus \Omega} (Q(x) - \tilde{Q})|\tilde{u}_\sigma(x)|^{p+1}dx &= \int_{\R^N \setminus (\Omega -\sigma)}(Q(x+\sigma) - \tilde{Q})|\tilde{u}(x)|^{p+1}dx\\
&\geq C \int_{B(0,2R)\setminus B(0,R)} \frac{R^\gamma}{|x|^{(N+2s)(p+1)}}dx. \\
\end{aligned}
\end{equation}
Since $R>D+1$,
\begin{equation}\label{N260}
\int_{\R^N \setminus \Omega} (Q(x) - \tilde{Q})|\tilde{u}_\sigma(x)|^{p+1}dx  \geq C_1 R^{N +\gamma - (N+2s)(p+1)},
\end{equation}  
for some $C_1>0$. By $(Q_2)$,    
$$
|x-\sigma| \geq |\sigma| - D \geq \frac{|\sigma|}{2} >D,
$$
and so, 
\begin{equation}\label{N27}
\begin{aligned}
\int_{\Omega} \tilde{Q} |\tilde{u}_\sigma(x)|^{p+1}dx & \leq C\int_{B(0,D)} \frac{1}{|x-\sigma|^{(N+2s)(p+1)}}dx\\
&\leq \frac{C|S^{N-1}|D^N}{D^{(N+2s)(p+1)}} = C_2,
\end{aligned}
\end{equation}
where $C_2$ does not depend on $R$. On the other hand, by H\"older inequality and Lemma \ref{Rlm01},  
\begin{equation}\label{N28}
\begin{aligned}
\int_{\R^N \setminus \Omega} u_1^p(x) \tilde{u}_\sigma(x)dx &\leq \left(\int_{\R^N \setminus \Omega} u_1^{p+1}(x)dx \right)^{\frac{p}{p+1}}\left(\int_{\R^N \setminus \Omega} \tilde{u}^{p+1}(x-\sigma)dx \right)^{\frac{1}{p+1}}\\
&\leq \left(\int_{\R^N \setminus \Omega} {\|u_1\|_{L^{\infty}(\R^N)}^{p-1}|u_1|^{2}}dx \right)^{\frac{p}{p+1}}\left(\int_{\R^N} \tilde{u}^{p+1}(x)dx \right)^{\frac{1}{p+1}}\\
& \leq \|u_1\|_{L^{\infty}(\R^N \setminus \Omega)}^{\frac{p(p-1)}{p+1}}\|u_1\|^{\frac{2p}{p+1}}_{L^{2}(\R^N) \setminus \Omega}\|\tilde{u}\|_{L^{p+1}(\R^N) 
} \leq C_3,\,\, 
\end{aligned}
\end{equation}
where $C_3$ does not depend on $R$, see Corollary \ref{KS}. 

In the same way,
\begin{equation}\label{N29}
\begin{aligned}
\int_{\R^N \setminus \Omega} u_1(x) \tilde{u}_\sigma^p(x)dx &\leq  \left(\int_{\R^N \setminus \Omega} u_{1}^{p+1}(x)\,dx \right)^{\frac{1}{p+1}}\left( \int_{\R^N \setminus \Omega} \tilde{u}_{\sigma}^{p+1}(x)dx\right)^{\frac{p}{p+1}}\\
&\leq  \|u_1\|^{\frac{p-1}{p+1}}_{L^{\infty}(\R^N \setminus \Omega)} \|u_1\|^{\frac{2}{p+1}}_{L^{2}(\R^N \setminus \Omega)} \|\tilde{u}\|^{p}_{L^{p+1}(\R^N)} \leq C_4,\\
\end{aligned}
\end{equation}
where $C_4$ does not depend on $R$.

According to  (\ref{N26})-(\ref{N29}), 
$$
\begin{aligned}
I(w_\sigma)  &\leq I(\alpha u_1) + I_\infty (\tau \tilde{u}) - \tilde{C}_1R^{N+\gamma - (N+2s)(p+1)} + C_D\\
&=I(\alpha u_1) + I_\infty (\tau \tilde{u}) + R^{N+\gamma - (N+2s)(p+1)}\left[- \tilde{C}_1 + \frac{C_D}{R^{N+\gamma - (N+2s)(p+1)}}\right],
\end{aligned}
$$
where $C_D$ is a constant that only depends on $D$. 
Hence, fixing $R_0$ large enough in the last inequality of way that the term inside of the brackets is negative, we deduce that 
\begin{equation}\label{N30}
\sup_{\alpha, \tau \in [{1}/{2}, 2]} I(\alpha u_1 - \tau \tilde{u}_\sigma) < c_1 + c_\infty,
\end{equation}  
from where it follows that $c < c_1 + c_\infty.$ \end{proof}

In order to get new estimates involving the level $c$, we will work with the following auxiliary problem  
$$
\left\{
\begin{aligned}
(-\Delta)^su + u &= \varphi_\varrho (x)Q(x)|u|^{p-1}u,\;\;\mbox{in}\:\:\R^N\setminus \Omega\\
\mathcal{N}_su(x) &= 0\;\;\mbox{in}\;\;{\Omega}.
\end{aligned}
\right.
\eqno{(P_\varrho)}
$$
where $\varphi_\varrho(x) = \varphi (\frac{x}{\varrho})$, $\varrho\gg D+1$, and  $ \varphi \in C^{\infty}_0(\mathbb{R}^N)$ satisfies 
$$
\varphi (x) = \begin{cases}
1,&|x|\leq 1\\
0,&|x| \geq 2.
\end{cases}
$$

Associated to problem $(P_\varrho)$ we have the energy functional  
$$
I_\varrho(u) = \frac{1}{4}\iint_{\R^{2N}\setminus \Omega^2}\frac{|u(x) - u(y)|^2}{|x-y|^{N+2s}}dy dx +\frac{1}{2}\int_{\R^N\setminus \Omega}|u|^2dx - \frac{1}{p+1}\int_{\R^N \setminus \Omega} Q(x)\varphi_\varrho(x)|u(x)|^{p+1}dx.
$$
Associated with $I_\varrho$ we have the nodal set
$$
\mathcal{M}_\varrho = \{u\in \mathcal{N}_\varrho:\;\;u^{\pm} \not \equiv 0\;\;\mbox{and}\;\;I'_{\varrho}(u)u^{\pm} = 0\}
$$
with
$$
\mathcal{N}_{\varrho} = \{u\in H_{\tilde{\Omega}}^{s}\setminus \{0\}:\;\;I'_\varrho(u)u=0\}
$$
and the number
$$
c_\varrho = \inf_{u\in \mathcal{M}_\varrho} I_\varrho(u).
$$
By similar reasoning as used in \cite{KTKWRW}( see also \cite{AlvesSouto}), we can show that for each $\varrho \gg D+1$ there exists $u_\varrho \in \mathcal{M}_\varrho$ such that $u_{\varrho}^{\pm} \not \equiv 0$ and $I_\varrho(u_\varrho) = c_\varrho >0$.

The next lemma establishes an important relation involving the levels $c_\varrho$ and $c$. 

\begin{lemma}\label{Nlm03}
$$
\lim_{\varrho\to +\infty}c_\varrho = c = \inf_{u\in \mathcal{M}}I(u).
$$
\end{lemma}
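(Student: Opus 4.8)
The plan is to establish the equality $\lim_{\varrho\to+\infty}c_\varrho = c$ by proving the two inequalities $\limsup_{\varrho\to+\infty}c_\varrho \le c$ and $\liminf_{\varrho\to+\infty}c_\varrho \ge c$ separately. For the first inequality, I would fix an arbitrary $u\in\mathcal{M}$ and use it as a competitor on the nodal set $\mathcal{M}_\varrho$. Since $\varphi_\varrho(x)\to 1$ pointwise as $\varrho\to+\infty$ and $0\le\varphi_\varrho\le 1$, Lebesgue's dominated convergence gives $\int_{\R^N\setminus\Omega}Q(x)\varphi_\varrho(x)|u^{\pm}|^{p+1}dx\to\int_{\R^N\setminus\Omega}Q(x)|u^{\pm}|^{p+1}dx$, and similarly for the mixed products appearing in $I'_\varrho(u)u^{\pm}$. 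Because $u\in\mathcal{M}$ satisfies $I'(u)u^{\pm}=0$ with the strict positivity $\|u^\pm\|_{H^s_{\tilde\Omega}}\ge\rho$ from Lemma \ref{Nlm01}, I would argue (as in the construction preceding this lemma, e.g. via Miranda's theorem applied to $(\alpha,\tau)\mapsto I'_\varrho(\alpha u^+ - \tau u^-)(\cdot)^{\pm}$) that for $\varrho$ large there exist $\alpha_\varrho,\tau_\varrho\to 1$ with $\alpha_\varrho u^+ - \tau_\varrho u^- \in\mathcal{M}_\varrho$; then $c_\varrho\le I_\varrho(\alpha_\varrho u^+ - \tau_\varrho u^-)\to I(u)$. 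Taking the infimum over $u\in\mathcal{M}$ yields $\limsup_{\varrho\to+\infty}c_\varrho\le c$.

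For the reverse inequality, let $u_\varrho\in\mathcal{M}_\varrho$ be the minimizer with $I_\varrho(u_\varrho)=c_\varrho$. The bound $\limsup c_\varrho\le c<+\infty$ together with the identity $I_\varrho(u_\varrho)=I_\varrho(u_\varrho)-\tfrac{1}{p+1}I'_\varrho(u_\varrho)u_\varrho = \tfrac12\left(\tfrac12-\tfrac1{p+1}\right)\|u_\varrho\|_{H^s_{\tilde\Omega}}^2$ shows $(u_\varrho)$ is bounded in $H^s_{\tilde\Omega}$. I would then project $u_\varrho$ back onto $\mathcal{M}$: seek $\alpha_\varrho,\tau_\varrho>0$ so that $\alpha_\varrho u_\varrho^+ - \tau_\varrho u_\varrho^-\in\mathcal{M}$. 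Writing out $I'(\alpha u_\varrho^+ - \tau u_\varrho^-)$ against $(\alpha u_\varrho^+ - \tau u_\varrho^-)^{\pm}$ and comparing with the corresponding identities for $u_\varrho\in\mathcal{M}_\varrho$, the only discrepancy is the factor $\varphi_\varrho$ inside the nonlinear terms; since $\varphi_\varrho\to 1$ and $(u_\varrho)$ is bounded (hence tight up to the cutoff region $|x|\ge\varrho$, where the $L^{p+1}$ mass is uniformly small), one shows $\alpha_\varrho,\tau_\varrho\to 1$. Consequently
\begin{equation*}
c\le I(\alpha_\varrho u_\varrho^+ - \tau_\varrho u_\varrho^-) = I_\varrho(u_\varrho) + o_\varrho(1) = c_\varrho + o_\varrho(1),
\end{equation*}
which gives $c\le\liminf_{\varrho\to+\infty}c_\varrho$ and completes the proof.

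The step I expect to be the main obstacle is controlling the rescaling parameters $\alpha_\varrho,\tau_\varrho$ in the reverse inequality: one must show that they stay bounded away from $0$ and $\infty$ and in fact converge to $1$, using only the boundedness of $(u_\varrho)$ and not any compactness (which fails on the exterior domain). The key quantitative inputs are the uniform lower bound $\|u_\varrho^\pm\|_{H^s_{\tilde\Omega}}\ge\rho$ (available because $u_\varrho\in\mathcal{M}_\varrho$ and the same Sobolev argument as in Lemma \ref{Nlm01}(i) applies with $Q\varphi_\varrho$ in place of $Q$, noting $\varphi_\varrho\equiv 1$ on a fixed large ball), and a uniform-in-$\varrho$ estimate showing the part of the $L^{p+1}$ norm of $u_\varrho^\pm$ sitting in $\{|x|\ge\varrho\}$ tends to $0$; the latter follows from boundedness in $H^s_{\tilde\Omega}\hookrightarrow L^{p+1}$ together with absolute continuity of the integral, or more carefully from a concentration-compactness argument ruling out mass escaping to infinity at the energy level $c<c_1+c_\infty$. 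Once $\alpha_\varrho,\tau_\varrho\to 1$ is secured, the remaining estimates are routine applications of Lebesgue's theorem exactly as in the proof of Lemma \ref{Nlm02}.
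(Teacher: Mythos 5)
Your treatment of $\limsup_{\varrho\to+\infty}c_\varrho\le c$ is fine and essentially matches the paper: project a fixed $w\in\mathcal{M}$ onto $\mathcal{M}_\varrho$, show the projection parameters are bounded (the paper stops at bounded, which already suffices once one uses $I(t w^++s w^-)\le I(w)$; you push to convergence to $1$, which is also fine for a fixed $w$ since dominated convergence applies), and conclude $c_\varrho\le I(w)+o_\varrho(1)$.

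The reverse direction is where your argument has a genuine gap, and also where it misses the shortcut that makes the paper's proof short. The key structural fact is the pointwise monotonicity
\begin{equation*}
I(u)\ \le\ I_\varrho(u)\qquad\text{for every }u\in H^s_{\tilde\Omega},
\end{equation*}
which holds because $0\le\varphi_\varrho\le 1$ and $Q\ge 0$, so the subtracted nonlinear term in $I_\varrho$ is smaller than the one in $I$. Given this, the paper takes a minimizer $u_\varrho\in\mathcal{M}_\varrho$, projects it onto $\mathcal{M}$ to get $t_\varrho u_\varrho^+ + s_\varrho u_\varrho^-\in\mathcal{M}$, and writes the chain
\begin{equation*}
c\ \le\ I(t_\varrho u_\varrho^+ + s_\varrho u_\varrho^-)\ \le\ I_\varrho(t_\varrho u_\varrho^+ + s_\varrho u_\varrho^-)\ \le\ I_\varrho(u_\varrho)\ =\ c_\varrho,
\end{equation*}
where the middle inequality is the monotonicity above and the third inequality is the maximality of $u_\varrho$ on its own cone inside $\mathcal{M}_\varrho$. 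This yields $c\le c_\varrho$ for every $\varrho\gg D+1$, with no need to know anything about the size or limit of $t_\varrho,s_\varrho$.

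Your argument instead tries to show the projection parameters $\alpha_\varrho,\tau_\varrho\to 1$, and the justification offered for this does not hold up. Boundedness of $(u_\varrho)$ in $H^s_{\tilde\Omega}$ does \emph{not} imply that the $L^{p+1}$ mass of $u_\varrho$ in $\{|x|\ge\varrho\}$ is uniformly small: absolute continuity of the integral controls a single fixed function, not a sequence, and a bounded sequence on an exterior domain can certainly carry nonvanishing $L^{p+1}$ mass in regions receding to infinity. The concentration-compactness remark is also not available at this stage: the inequality $c<c_1+c_\infty$ (Lemma \ref{Nlm02}) is exploited only later, in the proof of Theorem \ref{nodal}, to identify the weak limit, and does not by itself forbid mass escaping past the cutoff. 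Without $\alpha_\varrho,\tau_\varrho\to 1$ (or without the $I\le I_\varrho$ comparison), the equality $I(\alpha_\varrho u_\varrho^+ - \tau_\varrho u_\varrho^-) = I_\varrho(u_\varrho) + o_\varrho(1)$ is unjustified. The fix is simply to drop the attempt to track the parameters and use the monotonicity comparison as above.
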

\begin{proof}
Fixed $u \in \mathcal{M}_\varrho$ and arguing as in \cite[Lemma 3.1]{KTKWRW}, there exist $t_\varrho, s_\varrho>0$ such that 
$$
t_\varrho u^+ + s_\varrho u^{-} \in \mathcal{M}.
$$
Since $I(u) \leq I_\varrho(u)$ for all $u\in H_{\tilde{\Omega}}^{s}$, it follows that 
$$
c\leq I(t_\varrho u^+ + s_\varrho u^{-}) \leq I_\varrho (t_\varrho u^+ + s_\varrho u^{-}) \leq I_\varrho(u) \leq c_\varrho, \quad \forall \varrho \gg D+1.
$$
Therefore 
\begin{equation}\label{N31}
c\leq \liminf_{\varrho\to +\infty}c_\varrho.
\end{equation}
On the other hand, given $w\in \mathcal{M}$, there exist $t_\varrho, s_\varrho>0$ such that 
$$
t_\varrho w^+ + s_\varrho w^-\in \mathcal{M}_\varrho.
$$ 
A direct computation gives that $(t_\varrho)$ and $(s_\varrho)$ are bounded, otherwise we have the limit below  
$$
\lim_{\varrho \to +\infty}I_\varrho(t_\varrho w^+ + s_\varrho w^-) <0,
$$
which is impossible, because  $I_\varrho(t_\varrho w^+ + s_\varrho w^-) \geq c_\varrho >0$ for all $\varrho>0$. Hence, by Lebesgue's theorem   
$$
\int_{\R^N \setminus \Omega} (1-\varphi_\varrho (x))Q(x)|t_\varrho w^+ + s_\varrho w^-|^{p+1}dx \to 0\;\;\mbox{as}\;\;\varrho\to \infty,
$$ 
from where it follows that 
$$
c_\varrho \leq I(w) + o_\varrho(1).
$$
From this,
$$
\limsup_{\varrho\to +\infty}c_\varrho\leq I(w),\;\;\forall w\in \mathcal{M},
$$
which leads to  
\begin{equation}\label{N37}
\limsup_{\varrho\to +\infty}c_\varrho \leq c.
\end{equation}
Combining (\ref{N31}) and (\ref{N37}), we discover
$$
\lim_{\varrho\to \infty}c_\varrho = c.
$$
\end{proof}

\noindent 
{\bf Proof of Theorem \ref{nodal}:} In what follows, we set $\varrho_n \to +\infty$ and $u_n=u_{\varrho_n}$. Since 
$$
c + \|u_n\|_{H_{\tilde{\Omega}}^{s}}\geq I_{\varrho_n}(u_n) - \frac{1}{p+1}I'_{\varrho_n}(u_n)u_n \geq  \frac{1}{2}\left( \frac{1}{2} - \frac{1}{p+1}\right)\|u_n\|_{H_{\tilde{\Omega}}^{s}}^{2}, 
$$
we conclude that $(u_n)$ is bounded in $H_{\tilde{\Omega}}^{s}$. Then, for some subsequence, there is $u\in H_{\tilde{\Omega}}^{s}$ such that 
$$
u_n  \rightharpoonup u\;\;\mbox{in}\;\;H_{\tilde{\Omega}}^{s} \quad \mbox{and}\quad I'(u)=0. 
$$ 
Now we are going to show that $u^{\pm} \neq 0$. Keeping this in mind, we need to consider three cases:
\begin{itemize}
\item[(i)] $u^+ = u^-=0$.
\item[(ii)] $u^+\neq 0$ and $u^- = 0$.
\item[(iii)] $u^+ = 0$ and $u^- \neq 0$
\end{itemize}
We are going to prove that the cases above do not hold, which permits to conclude that $u^{\pm} \neq 0$. However, it is enough to prove only $(i)$, because the other cases follow with the same type of arguments.

Since Proposition \ref{GC-C} cannot be applied in this case, there exist $\eta, \kappa>0$ and sequences $(y_{n}^{1})$ and $(y_{n}^{2})$ in $\R^N\setminus \Omega$ with $|y_n^1|, |y_{n}^{2}|\to \infty$ such that    
\begin{equation}\label{N38}
\liminf_{n\to +\infty} \int_{U(y_{n}^{1}, \kappa)} |u_{n}^{+}|^2dx\geq \eta\quad \mbox{and}\quad \liminf_{n \to +\infty} \int_{U(y_{n}^{2}, \kappa)} |u_{n}^{-}|^2dx\geq \eta.
\end{equation}
Letting $w_n (x) = u_{n}(x+y_{n}^{1})$, $z_{n}(x) = u_n(x+ y_{n}^{2})$ and arguing as in the proof of Theorem \ref{ground},  there exist $w,z\in H^s(\R^N)\setminus \{0\}$ such that $w_n  \rightharpoonup w$ and $z_n  \rightharpoonup z$ in $H^s(B_T(0))$ for all $T>0$,  with $w^+\neq 0$ and $z^-\neq 0$. Now, let $\psi \in H^{s}(\mathbb{R}^N)$ be a test function with bounded support. Since $I'_{\varrho_n}(u_{n}) = 0$, then 
\begin{equation}\label{N39}
I'_{\varrho_n}(u_n)\psi(.-y_{n}^{1}) = 0,
\end{equation}
for $n$ large enough. From (\ref{N39}),  
\begin{equation}\label{N40}
\begin{aligned}
&\frac{1}{2}\iint_{\R^{2N}\setminus (\Omega-y_n^1)^2} \frac{[w_{n}(x) - w_{n}(y)][\psi(x) - \psi(y)]}{|x-y|^{N+2s}}dy dx + \int_{\R^N \setminus (\Omega - y_{n}^{1})} w_{n}\psi dx \\
&\hspace{6cm}= \int_{\R^{N}\setminus (\Omega -y_{n}^{1})} \varphi_{\varrho_n}(x+y_{n}^{1})Q(x + y_{n}^{1})|w_{n}|^{p-1}w_{n}\psi dx.
\end{aligned}
\end{equation} 
Fixing $T>0$ of way that $supp \psi \subset B(0,T)$, the weak convergence of $(w_n)$ to $w$ in $H^{s}(B_T(0))$ gives
\begin{equation}\label{N41}
\begin{aligned}
\frac{1}{2}\iint_{\R^{2N}\setminus (\Omega-y_n^{1})^2 } &\frac{[w_{n}(x) - w_{n}(y)][\psi(x) - \psi(y)]}{|x-y|^{N+2s}}dy dx + \int_{\R^N \setminus (\Omega - y_{n}^{1})} w_{n}(x)\psi(x)dx \\
&\to \frac{1}{2} \iint_{\R^{2N}} \frac{[w(x) - w(y)][\psi(x) - \psi(y)]}{|x-y|^{N+2s}}dydx + \int_{\R^N} w(x)\psi(x)dx
\end{aligned}
\end{equation}
On the other hand, as $x/\varrho_n \to 0$ for all $x \in \mathbb{R}^N$, we can assume that there is $A \in [0,1]$, independent of $x$, such that
$$
\varphi_{\varrho_n}(x+y_{n}^{1}) \to A \;\;\mbox{and}\;\;Q(x+y_{n}^{1}) \to \tilde{Q}\;\;\mbox{a.e. $x\in \R^N$, as}\;\;n \to +\infty.
$$
Consequently
$$
\varphi_{\varrho_n}(x+ y_{n}^{1})Q(x+y_{n}^{1})|w_{n}(x)|^{p-1}w_{n}(x) \to A\tilde{Q}|w(x)|^{p-1}w(x)\;\;\mbox{a.e. $x\in \R^N$, as $n \to +\infty$}.
$$
Moreover, by boundedness of $(w_{n})$ in $L^{p+1}(\R^N \setminus \Omega)$, 
$$
\int_{\R^N\setminus \Omega}\left| \varphi_{\varrho_n}(x+y_{n}^{1})Q(x+y_{n}^{1})|w_{n}(x)|^{p-1}w_{n}(x)\right|^{\frac{p+1}{p}}dx \leq \|Q\|_{\infty}\int_{\R^N \setminus \Omega} |w_{n}(x)|^{p+1}dx.
$$
Thereby, by \cite[Lemma 4.6]{Kavian}, 
\begin{equation}\label{N42}
\int_{\R^{N}\setminus (\Omega + y_{n}^{1})} \varphi_{\varrho_n}(x+y_{n}^{1})Q(x+y_{n}^{1})|w_{n}(x)|^{p-1}w_{n}(x)\psi(x)dx \to \int_{\R^N}A\tilde{Q}|w(x)|^{p-1}w(x)\psi(x)dx,
\end{equation}
Now, according to (\ref{N41})-(\ref{N42}), 
$$
\frac{1}{2} \iint_{\R^{2N}} \frac{[w(x) - w(y)][\psi(x) - \psi(y)]}{|x-y|^{N+2s}}dydx + \int_{\R^N} w(x)\psi(x)dx=\int_{\R^N}A\tilde{Q}|w(x)|^{p-1}w(x)\psi(x)dx
$$
for all $\psi \in H^{s}(\mathbb{R}^N)$. Hence, setting the functional $I_{A,\infty}: H^s(\R^N) \to \R$ by  
\begin{equation}\label{P01}
I_{A, \infty} (u) = \frac{1}{4}\int_{\R^N}\int_{\R^N} \frac{|u(x) - u(y)|^2}{|x-y|^{N+2s}}dy dx +\frac{1}{2} \int_{\R^N}|u|^2dx - \frac{1}{p+1}\int_{\R^N}A\tilde{Q}|u(x)|^{p+1}dx,
\end{equation} 
we deduce that
$$
I'_{A,\infty}(w)\psi=0. 
$$
Since $w \not= 0$, taking $\psi=w^{+}$ we compute   
$$
I'_{A,\infty}(w^{+})w^{+} \leq 0.
$$
Then, there is $t_{w} \in (0,1]$ such that 
$$
t_{w}w^{+} \in \mathcal{N}_{A,\infty}= \{u\in H^{s}(\mathbb{R}^N)\setminus \{0\}:\;\;I'_{A,\infty}(u)u=0\}.
$$ 
Considering 
$$
c_{A,\infty}=\inf_{u\in \mathcal{N}_{A,\infty}} I_{A,\infty}(u),
$$
we derive that 
$$
c_\infty \leq c_{A,\infty},
$$
and so,
$$
c_\infty\leq c_{A,\infty}\leq I_{A,\infty}(t_ww^{+})= I_{A,\infty}(t_ww^{+})- \frac{1}{2}I'_\infty(t_ww^+)t_ww^+,
$$
that is,
$$
c_\infty\leq \left(\frac{1}{2}-\frac{1}{p+1} \right)\int_{\R^N}A\tilde{Q}|w^+(x)|^{p+1}dx \leq  \left(\frac{1}{2}-\frac{1}{p+1} \right)\int_{\R^N}\tilde{Q}|w^+(x)|^{p+1}dx.
$$
A similar arguments yields 
$$
c_\infty\leq \left(\frac{1}{2}-\frac{1}{p+1} \right)\int_{\R^N}A\tilde{Q}|z^-(x)|^{p+1}dx \leq \left(\frac{1}{2}-\frac{1}{p+1} \right)\int_{\R^N}\tilde{Q}|z^-(x)|^{p+1}dx.
$$
Hence, 
$$
2c_\infty \leq \left(\frac{1}{2}-\frac{1}{p+1} \right)\int_{\R^N}\tilde{Q}|w^+(x)|^{p+1}dx+\left(\frac{1}{2}-\frac{1}{p+1} \right)\int_{\R^N}\tilde{Q}|z^-(x)|^{p+1}dx.
$$
Combining the  Fatou's Lemma with the Lemmas \ref{Nlm02} and \ref{Nlm03}, we find
$$
\begin{aligned}
2c_{\infty} &\leq  \liminf_{n \to +\infty} \left(\frac{1}{2}-\frac{1}{p+1}\right)\left[ \int_{\R^N\setminus (\Omega - y_{n}^{1})} Q(x+y_{n}^{1})|w_{n}^{+}(x)|^{p+1} + \int_{\R^N\setminus (\Omega - y_{n}^{2})}Q(x+y_{n}^{2})|z_n^{-}(x)|^{p+1}dx\right] \\
&=\liminf_{n \to +\infty} \left(\frac{1}{2}-\frac{1}{p+1}\right)\int_{\R^N\setminus \Omega}Q(x)|u_n(x)|^{p+1}dx\\
& = \liminf_{n \to +\infty} \left[I_{\varrho_n}(u_n) - \frac{1}{2}I'_{\varrho_n}(u_n)u_n\right] = \lim_{n \to +\infty} I_{\varrho_n}(u_n)= \lim_{n \to +\infty} c_{\varrho_n}=c < c_1+c_\infty,
\end{aligned}
$$
which is absurd. 

\section{Appendix: Some properties of the ground state solution of (P)}

In this section, our main goal is to study the $L^\infty$ estimate and decay at infinite of the ground state solution $u_1$ of ($P$) that was obtained in Section 3. We start our analysis with the following lemma. 
\begin{lemma}\label{Rlm01}
The ground state solution  $u_1$ belongs to $L^\infty(\R^N\setminus \Omega)$.
\end{lemma}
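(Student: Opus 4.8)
The plan is to obtain the $L^{\infty}$ bound through a De Giorgi--Moser iteration adapted to the nonlocal setting. Two structural facts will drive the argument: that $u_1\geq 0$, as was proved while establishing Theorem \ref{ground}, and that $p-1<2_{s}^{*}-2=\tfrac{4s}{N-2s}$, i.e. exactly the subcriticality hypothesis $p<\tfrac{N+2s}{N-2s}$; this is what makes the iteration gain genuine. Throughout I write $\tilde{\Omega}=\R^{N}\setminus\Omega$, $[v]^{2}:=\tfrac12\iint_{\R^{2N}\setminus\Omega^{2}}\tfrac{|v(x)-v(y)|^{2}}{|x-y|^{N+2s}}\,dy\,dx$, so that $\|v\|_{H_{\tilde{\Omega}}^{s}}^{2}=[v]^{2}+\|v\|_{L^{2}(\tilde{\Omega})}^{2}$, and $S$ for the constant of the embedding $H_{\tilde{\Omega}}^{s}\hookrightarrow L^{2_{s}^{*}}(\tilde{\Omega})$ from Lemma \ref{Pnta01}.

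First I would fix $\beta\geq 1$ and $L>0$, set $u_{1,L}:=\min\{u_1,L\}$, and test the weak formulation of $(P)$ satisfied by $u_1$ with $v:=u_1\,u_{1,L}^{2(\beta-1)}$. Since $u_{1,L}$ is bounded and $t\mapsto t\,(\min\{t,L\})^{2(\beta-1)}$ is Lipschitz on $[0,\infty)$, one has $v\in H_{\tilde{\Omega}}^{s}$, hence $v$ is admissible. Writing $\gamma_L:=u_1\,u_{1,L}^{\beta-1}\leq u_1^{\beta}$ and applying, inside the Gagliardo integral, the elementary inequality
\[
(a-b)\big(a\,\underline a^{\,2(\beta-1)}-b\,\underline b^{\,2(\beta-1)}\big)\geq \tfrac{1}{c_\beta}\big(a\,\underline a^{\,\beta-1}-b\,\underline b^{\,\beta-1}\big)^{2},\qquad \underline t:=\min\{t,L\},
\]
valid for $a,b\geq 0$ with $c_\beta\leq\beta$, together with the identities $u_1 v=\gamma_L^{2}$ and $u_1^{p}v=u_1^{p-1}\gamma_L^{2}$ (where $u_1\geq0$ is used), I arrive at
\[
\tfrac{1}{c_\beta}\,[\gamma_L]^{2}+\|\gamma_L\|_{L^{2}(\tilde{\Omega})}^{2}\leq \|Q\|_{\infty}\int_{\tilde{\Omega}}u_1^{p-1}\gamma_L^{2}\,dx,
\]
and therefore $\|\gamma_L\|_{H_{\tilde{\Omega}}^{s}}^{2}\leq c_\beta\,\|Q\|_{\infty}\int_{\tilde{\Omega}}u_1^{p-1}\gamma_L^{2}\,dx$.

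Next I would estimate the right-hand side by H\"older's inequality with exponents $\tfrac{2_{s}^{*}}{p-1}$ and $\theta:=\tfrac{2_{s}^{*}}{2_{s}^{*}-(p-1)}$, obtaining $\int_{\tilde{\Omega}}u_1^{p-1}\gamma_L^{2}\,dx\leq \|u_1\|_{L^{2_{s}^{*}}(\tilde{\Omega})}^{p-1}\|\gamma_L\|_{L^{2\theta}(\tilde{\Omega})}^{2}$, where $2\theta\in(2,2_{s}^{*})$ precisely because $p-1<2_{s}^{*}-2$. Combining this with $\|\gamma_L\|_{L^{2_{s}^{*}}(\tilde{\Omega})}\leq S\|\gamma_L\|_{H_{\tilde{\Omega}}^{s}}$, with $\gamma_L\leq u_1^{\beta}$, and then letting $L\to\infty$ (Fatou on the left, monotone convergence on the right), I get the bootstrap step: if $u_1\in L^{2\theta\beta}(\tilde{\Omega})$ then $u_1\in L^{\beta 2_{s}^{*}}(\tilde{\Omega})$ and
\[
\|u_1\|_{L^{\beta 2_{s}^{*}}(\tilde{\Omega})}\leq \big(S\,c_\beta\,\|Q\|_{\infty}\,\|u_1\|_{L^{2_{s}^{*}}(\tilde{\Omega})}^{p-1}\big)^{1/(2\beta)}\,\|u_1\|_{L^{2\theta\beta}(\tilde{\Omega})},
\]
whose exponent gain is $\tfrac{\beta 2_{s}^{*}}{2\theta\beta}=\tfrac{2_{s}^{*}-(p-1)}{2}=:\lambda>1$. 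Finally, starting from $u_1\in H_{\tilde{\Omega}}^{s}\hookrightarrow L^{2_{s}^{*}}(\tilde{\Omega})$ and iterating with $\beta_k:=\lambda^{k}$, the integrability exponents $2_{s}^{*}\lambda^{k}$ tend to $\infty$, so $u_1\in L^{q}(\tilde{\Omega})$ for every $q\in[2_{s}^{*},\infty)$; and since $c_{\beta_k}\leq\lambda^{k}$, the product $\prod_{k\geq1}\big(C_0\lambda^{k}\big)^{1/(2\lambda^{k})}$ converges (as $\sum_k k\lambda^{-k}<\infty$), so $\|u_1\|_{L^{2_{s}^{*}\lambda^{k}}(\tilde{\Omega})}$ stays bounded independently of $k$, which together with $u_1\in L^{2_{s}^{*}}(\tilde{\Omega})$ forces $u_1\in L^{\infty}(\tilde{\Omega})$.

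The step I expect to be the main (indeed essentially the only) obstacle is the fully rigorous verification of the pointwise algebraic inequality above with the truncation $u_{1,L}$ present, and the bookkeeping of the constant $c_\beta\sim\beta$ through the iteration, which is exactly what guarantees the constants multiply to something finite. The nonlocal Neumann structure causes no extra difficulty here: $\Omega$ enters the bilinear form only through the integration domain $\R^{2N}\setminus\Omega^{2}$, the test functions are globally defined on $\R^{N}$, and the subcriticality $p<\tfrac{N+2s}{N-2s}$ is invoked precisely at the single point where it is indispensable, namely to produce the strict gain $\lambda>1$.
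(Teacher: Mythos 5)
Your proposal is correct and follows essentially the same route as the paper: a Moser-type iteration testing the weak formulation with $u_1 u_{1,L}^{2(\beta-1)}$, using the algebraic inequality from Iannizzotto--Mosconi--Squassina (your $c_\beta\le\beta$ coincides with the paper's $\beta^{2}/(2\beta-1)$), the same H\"older split with exponent $2\theta=\alpha_s^*=\tfrac{2\cdot 2_s^*}{2_s^*-(p-1)}$, the gain factor $\lambda=2_s^*/\alpha_s^*$, passage $L\to\infty$, and convergence of the iterated constants. The only cosmetic difference is that you exploit $u_1\ge0$ to write $\min\{t,L\}$ instead of the signed truncation $\operatorname{sgn}(t)\min\{|t|,L\}$ used in the paper.
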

\begin{proof}  In this proof we adapt for our case some arguments found in \cite[Lemma 5.4]{AlvesAmbrosio}. In what follows we denote $u_1$ by $u$. Moreover, for all $t\in \R$ and $L>0$, we set
\begin{equation}\label{A01}
t_L = \mbox{sgn}(t) \min\{|t|, L\}.
\end{equation}
By \cite[Lemma 3.1]{AISMMS}, for all $a,b\in \R$, $\beta > 1$ and $L>0$ we have 
\begin{equation}\label{A02}
(a-b)(a|a|_{L}^{2(\beta -1)} - b|b|_{L}^{2(\beta -1)}) \geq \frac{2\beta -1}{\beta^2} (a|a|_{L}^{\beta - 1} - b|b|_{L}^{\beta -1})^2.
\end{equation}
Since the mapping 
$t\to t|t|_{L}^{2(\beta-1)} \;\,\mbox{is Lipschitz in $\R$},$  
then $uu_{L}^{2(\beta-1)} \in H_{\tilde{\Omega}}^{s}$. Taking $v = uu_{L}^{2(\beta-1)}$ as a test function in $(P)$, the Lemma \ref{Pnta01} together with (\ref{A02}) and the boundedness of $Q$ leads to 
$$
\begin{aligned}
&\|uu_{L}^{\beta-1}\|_{L^{2_{s}^{*}}(\R^N \setminus \Omega)}^{2} \leq C \|uu_{L}^{\beta-1}\|_{H_{\tilde{\Omega}}^{s}}^{2}\\
& = C\left(\iint_{\R^{2N}\setminus \Omega^2} \frac{|(uu_{L}^{\beta-1})(x) - (uu_{L}^{\beta-1})(y)|^2}{|x-y|^{N+2s}}dy dx + \int_{\R^N \setminus \Omega} (uu_{L}^{\beta-1})^2dx\right)\\
&\leq C\left( \frac{\beta^2}{2\beta-1} \iint_{\R^{2N}\setminus \Omega^2} \frac{(u(x) - u(y))(u(x)u_{L}^{2(\beta-1)}(x) - u(y)u_{L}^{2(\beta-1)}(y))}{|x-y|^{N+2s}} dy dx + \int_{\R^N\setminus \Omega} u^2u_{L}^{2(\beta-1)}dx\right)\\
&\leq \frac{C\beta^2}{2\beta-1} \left(\frac{1}{2}\iint_{\R^{2N}\setminus \Omega^2} \frac{(u(x) - u(y))(u(x)u_{L}^{2(\beta-1)}(x) - u(y)u_{L}^{2(\beta-1)}(y))}{|x-y|^{N+2s}} dy dx + \int_{\R^N\setminus \Omega} u^2u_{L}^{2(\beta-1)}dx\right)\\
&\leq C\beta^2 \int_{\R^{N}\setminus \Omega} Qu^{p+1}u_{L}^{2(\beta - 1)}dx\\
&\leq \tilde{C}\beta^2 \int_{\R^N\setminus \Omega} u^{p+1}u_{L}^{2(\beta - 1)}dx
\end{aligned}
$$
Let $w_{L} = uu_{L}^{\beta -1}$. By H\"older inequality,  
$$
\|w_{L}\|_{L^{2_{s}^{*}}(\R^N \setminus \Omega)}^{2} \leq \tilde{C}\beta^2 \left( \int_{\R^N \setminus \Omega} u^{2_{s}^{*}}dx\right)^{\frac{p-1}{2_{s}^{*}}}\left( \int_{\R^N \setminus \Omega} w_{L}^{\alpha_{s}^{*}}dx\right)^{\frac{2}{\alpha_{s}^{*}}},
$$
where $\alpha_{s}^{*} = \frac{22_{s}^{*}}{2_{s}^{*} - (p-1)} \in (2, 2_{s}^{*})$. Since $u\in H_{\tilde{\Omega}}^{s}$, 
\begin{equation}\label{A03}
\|w_{L}\|_{L^{2_s^*}(\R^N \setminus \Omega)}^{2} \leq \tilde{C}_1\beta^2 \|w_L\|_{L^{\alpha_{s}^{*}}(\R^N \setminus \Omega)}^{2}.
\end{equation}
Now, note that if $u^{\beta}\in L^{\alpha^{*}_{s}}(\R^{N}\setminus \Omega)$, from the definition of $w_{L}$, and by using the fact that $u_{L}\leq u$ and  (\ref{A03}), we derive
\begin{equation}\label{A04}
\|w_{L}\|_{L^{2_{s}^{*}}(\R^{N}\setminus \Omega)}^{2}\leq \tilde{C}_1 \beta^{2}  \left(\int_{\R^{N}\setminus \Omega} (uu_{L}^{\beta-1})^{\alpha_{s}^{*}} dx\right)^{\frac{2}{\alpha^{*}_{s}}}\leq \tilde{C}_1\beta^2 \left( \int_{\R^N \setminus \Omega} u^{\beta \alpha_{s}^{*}} dx\right)^{\frac{2}{\alpha_{s}^{*}}}<\infty.
\end{equation}
Taking the limit as $L \rightarrow +\infty$ in (\ref{A04}) and employing the Fatou's Lemma, we find 
\begin{equation}\label{A05}
\|u\|_{L^{\beta 2_{s}^{*}}(\R^{N}\setminus \Omega)}\leq \tilde{C}_1^{\frac{1}{\beta}} \beta^{\frac{1}{\beta}} \|u\|_{L^{\beta \alpha^{*}_{s}}(\R^{N}\setminus \Omega)}
\end{equation}
whenever $u^{\beta \alpha^{*}}\in L^{1}(\R^{N}\setminus \Omega)$. Now, choosing $\beta:=\frac{2^{*}_{s}}{\alpha^{*}_{s}}>1$  and observing that $u\in L^{2^{*}_{s}}(\R^{N}\setminus \Omega)$, it is easy to check that inequality above holds for this choice of $\beta$. Then, by using the fact that $\beta^{2}\alpha^{*}_{s}=\beta 2^{*}_{s}$, it follows that \eqref{A05} holds with $\beta$ replaced by $\beta^{2}$.
Therefore, 
$$
\|u\|_{L^{\beta^{2} 2^{*}_{s}}(\R^{N}\setminus \Omega)}\leq \tilde{C}_{1}^{\frac{1}{\beta^{2}}} \beta^{\frac{2}{\beta^{2}}} \|u\|_{L^{\beta^{2} \alpha^{*}_{s}}(\R^{N}\setminus \Omega)}\leq  \tilde{C}_{1}^{\left(\frac{1}{\beta}+\frac{1}{\beta^{2}}\right)} \beta^{\frac{1}{\beta}+\frac{2}{\beta^{2}}} \|u\|_{L^{\beta \alpha^{*}_{s}}(\R^{N}\setminus \Omega)}.
$$
Iterating this process, and recalling that $\beta \alpha^{*}_{s} = 2^{*}_{s}$, we can infer that for every $m\in \mathbb{N}$
\begin{equation}\label{A06}
\|u\|_{L^{\beta^{m} 2^{*}_{s}}(\R^{N}\setminus \Omega)}\leq \tilde{C}_1^{\sum_{j=1}^{m}\frac{1}{\beta^{j}}} \beta^{\sum_{j=1}^{m} j\beta^{-j}} \|u\|_{L^{2^{*}_{s}}(\R^{N}\setminus \Omega)}.
\end{equation}
Now, taking the limit in (\ref{A06}) as $m \rightarrow +\infty$, we get  
$$
\|u\|_{L^{\infty}(\R^{N}\setminus \Omega)}\leq \tilde{C}_1^{\sigma_{1}} \beta^{\sigma_{2}}K,
$$
where 
\begin{equation} \label{K}
K=\|u\|_{L^{2^{*}_{s}}(\R^{N}\setminus \Omega)}, \quad \sigma_{1}:=\sum_{j=1}^{\infty}\frac{1}{\beta^{j}}<\infty \quad \mbox{ and } \quad \sigma_{2}:=\sum_{j=1}^{\infty}\frac{j}{\beta^{j}}<\infty.
\end{equation}
\end{proof}

\begin{corollary} \label{KS} The constant $K$ given in (\ref{K}) does not depend on $R$. Hence, $\|u_1\|_{L^{\infty}(\R^{N}\setminus \Omega)}\leq M_2$, for some constant $M_2$ that is independent of $R>D+1$. Moreover, $\|u_1\|_{L^{2}(\R^{N}\setminus \Omega)}$ is also bounded from above by a constant that does not depend on $R>D+1$.
	
\end{corollary}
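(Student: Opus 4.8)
The plan is to go through every constant appearing in the $L^\infty$ bound of Lemma~\ref{Rlm01} and check that it is controlled by quantities that do not involve $R$. The underlying remark is that the problem $(P)$, the functional $I$, the Nehari manifold $\mathcal N$ and the mountain pass level $c_1$ depend only on $N$, $s$, $p$, $\Omega$ and $Q$; the parameter $R$ enters the discussion solely through hypothesis $(Q_2)$, which is used only in the proof of Theorem~\ref{nodal}. Hence the ground state solution $u_1$ produced by Theorem~\ref{ground} is one fixed function, and the only task is to exhibit $R$-free estimates for its relevant norms.

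The first step is to bound $K=\|u_1\|_{L^{2_{s}^{*}}(\R^N\setminus\Omega)}$. Since $u_1\in\mathcal N$, identity~\eqref{cpositivo} applied to $u=u_1$, together with $I(u_1)=c_1$, shows that $\|u_1\|_{H_{\tilde{\Omega}}^{s}}^{2}$ is a fixed positive multiple of $c_1$. By Proposition~\ref{Gres02}, $c_1<c_\infty$, and $c_\infty$ depends only on $\tilde Q$, $p$, $N$, $s$; therefore $\|u_1\|_{H_{\tilde{\Omega}}^{s}}\le M_1$ for some $M_1$ independent of $R$. Applying item~(3) of Lemma~\ref{Pnta01}, i.e. the continuous embedding $H_{\tilde{\Omega}}^{s}\hookrightarrow L^{2_{s}^{*}}(\R^N\setminus\Omega)$ whose constant $C$ depends only on $N$, $s$ and the fixed set $\Omega$, gives $K\le C M_1$, a bound free of $R$. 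Since $\|u_1\|_{L^{2}(\R^N\setminus\Omega)}\le\|u_1\|_{H_{\tilde{\Omega}}^{s}}\le M_1$, this already settles the last assertion of the corollary.

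The second step is to revisit the proof of Lemma~\ref{Rlm01}. There the constant $\tilde C_1$ in~\eqref{A03} is of the form $\tilde C\,K^{\,p-1}$, with $\tilde C$ involving only the Sobolev constant of $H_{\tilde{\Omega}}^{s}\hookrightarrow L^{2_{s}^{*}}$ and $\|Q\|_\infty$, while $\beta=2_{s}^{*}/\alpha_{s}^{*}$ and the finite exponents $\sigma_1,\sigma_2$ of~\eqref{K} depend only on $N$, $s$, $p$. Consequently the estimate $\|u_1\|_{L^\infty(\R^N\setminus\Omega)}\le\tilde C_1^{\,\sigma_1}\beta^{\,\sigma_2}K$ supplied by that lemma is built entirely from $R$-independent quantities once $K$ has been controlled, and one may take $M_2:=\bigl(\tilde C\,(CM_1)^{\,p-1}\bigr)^{\sigma_1}\beta^{\,\sigma_2}\,CM_1$. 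The one point that calls for care is exactly this dependence: $\tilde C_1$ itself contains $K$, so the $L^\infty$ bound of Lemma~\ref{Rlm01} becomes useful only after $K$ has been estimated uniformly in $R$ as in the first step; the rest is routine bookkeeping.
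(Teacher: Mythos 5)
Your argument is correct and follows the same overall strategy as the paper: show that the ground-state level $c_1$ admits an $R$-independent upper bound, deduce an $R$-independent bound on $\|u_1\|_{H^{s}_{\tilde\Omega}}$ via the Nehari identity, then pass to $\|u_1\|_{L^2}$ and to $K=\|u_1\|_{L^{2^*_s}}$ via the Sobolev embedding and feed $K$ back into Lemma~\ref{Rlm01}. The one genuinely different step is how you control $c_1$: you invoke Proposition~\ref{Gres02} to obtain $c_1<c_\infty$, where $c_\infty$ depends only on $\tilde Q,p,N,s$, whereas the paper instead fixes a test function $\varphi\in C^{\infty}_0(B(0,D+1)\setminus\Omega)$ and uses $(Q_1)$ to bound $c_1\le\max_{t\ge0}I(t\varphi)=A$ with $A$ manifestly $R$-free. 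Both routes work; your choice reuses an existing proposition and is arguably cleaner, while the paper's explicit test-function estimate is more self-contained and does not rely on the strict inequality of Proposition~\ref{Gres02}. Your tracing of the constants $\tilde C_1,\beta,\sigma_1,\sigma_2$ through the Moser iteration of Lemma~\ref{Rlm01}, and your observation that the only hidden dependence on $u_1$ there is through $K$, supplies a step that the paper leaves implicit (``This shows the desired result''), so your write-up is actually slightly more complete on that point. One small caveat worth being aware of, though it applies equally to the paper's own argument: $\tilde C_1$ also contains $\|Q\|_\infty$, so the conclusion is uniform in $R$ only under the tacit assumption that $\|Q\|_\infty$ itself is not allowed to grow with $R$; your phrasing mentions $\|Q\|_\infty$ as a constituent of $\tilde C$ but does not explicitly note that it must be treated as $R$-free.
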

\begin{proof} Let $\varphi \in C^{\infty}_{0}(B(0,D+1) \setminus \Omega)$. From (\ref{G02}),
$$
c_1 \leq \max_{t\geq 0}I(t\varphi)\leq \left(\frac{1}{2}-\frac{1}{p+1}\right)\frac{\|\varphi\|^{\frac{p+1}{p-1}}_{H_{\tilde{\Omega}}^{s}}}{\left(\int_{\R^N \setminus \Omega} \tilde{Q}|\varphi|^{p+1}dx\right)^{\frac{2}{p-1}}}=A,
$$
where $A$ does not depend on $R>D+1$. As $u_1$ is a solution, it follows that
$$
c_1=I(u_1)-\frac{1}{p+1}I'(u_1)u_1 \geq \frac{1}{2}\left(\frac{1}{2}-\frac{1}{p+1}\right)\|u_1\|^{2}_{H_{\tilde{\Omega}}^{s}}.
$$
Therefore, 
$$
\|u_1\|_{H_{\tilde{\Omega}}^{s}} \leq \sqrt{\frac{4(p+1)A}{(p-1)}}= C_*,
$$
where $C_*$ does not depend on $R$. This inequality implies 
$$
\|u_1\|_{L^{2}(\R^{N}\setminus \Omega)} \leq \sqrt{\frac{4(p+1)A}{(p-1)}}= C_*.
$$
Now, by Sobolev embedding, there is $C>0$, independent of $R>D+1$ such that
$$
\|u_1\|_{L^{2^*_s}(\mathbb{R}^N \setminus \Omega)} \leq C \|u_1\|_{H_{\tilde{\Omega}}^{s}}.
$$ 
Then
$$
\|u_1\|_{L^{2^*_s}(\mathbb{R}^N \setminus \Omega)} \leq M_1, 
$$ 
for some $M_1$ that is independent of $R>D+1$. This shows the desired result. 
\end{proof}

As a byproduct of the last lemma we have the following corollary

\begin{corollary} The ground state solution $u_1$ is a bounded function in $\mathbb{R}^N$, that is, $u_1 \in L^{\infty}(\mathbb{R}^N)$ and $\|u_1\|_{L^{\infty}(\R^{N})}\leq M_2$, where $M_2$ was given in Corollary \ref{KS}.

\end{corollary}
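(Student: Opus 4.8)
The plan is to bound $u_1$ on $\Omega$ by its supremum on $\R^N\setminus\Omega$, which is already controlled by $M_2$ thanks to Lemma \ref{Rlm01} and Corollary \ref{KS}; on $\R^N\setminus\Omega$ there is nothing left to prove. The mechanism is the nonlocal Neumann condition: it forces $u_1(x)$, for $x\in\Omega$, to be a weighted average of the exterior values $u_1(y)$, $y\in\R^N\setminus\Omega$, and an average of numbers bounded by $M_2$ is bounded by $M_2$.

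First I would record that, since $u_1$ is a weak solution of $(P)$, i.e. $I'(u_1)v=0$ for all $v\in H_{\tilde{\Omega}}^{s}$, testing with $v\in C_c^\infty(\Omega)$ (which lies in $H_{\tilde{\Omega}}^{s}$ after extension by zero, as $v$ vanishes near $\partial\Omega$) makes all terms of $I'(u_1)v$ except the bilinear one disappear and leaves $\int_{\Omega}\mathcal{N}_su_1(x)\,v(x)\,dx=0$ for every such $v$. Hence $\mathcal{N}_su_1(x)=0$ for a.e. $x\in\Omega$, that is,
$$
\int_{\R^N\setminus\Omega}\frac{u_1(x)-u_1(y)}{|x-y|^{N+2s}}\,dy=0 \qquad\text{for a.e. } x\in\Omega .
$$
Next, for each $x\in\Omega$ set $\delta_x=dist(x,\R^N\setminus\Omega)>0$ (positive since $\Omega$ is open). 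Because $|x-y|\ge\delta_x$ on $\R^N\setminus\Omega$ and $N+2s>N$, the kernel $|x-y|^{-(N+2s)}$ is integrable over $\R^N\setminus\Omega$; together with $u_1\in L^\infty(\R^N\setminus\Omega)$ this makes both
$$
a(x):=\int_{\R^N\setminus\Omega}\frac{dy}{|x-y|^{N+2s}}\in(0,\infty),\qquad
b(x):=\int_{\R^N\setminus\Omega}\frac{u_1(y)}{|x-y|^{N+2s}}\,dy
$$
finite, with $a(x)>0$. Therefore the identity above splits as $u_1(x)\,a(x)=b(x)$, which yields the representation formula
$$
u_1(x)=\frac{1}{a(x)}\int_{\R^N\setminus\Omega}\frac{u_1(y)}{|x-y|^{N+2s}}\,dy \qquad\text{for a.e. } x\in\Omega .
$$

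Since $y\mapsto a(x)^{-1}|x-y|^{-(N+2s)}$ is a probability density on $\R^N\setminus\Omega$, the formula exhibits $u_1(x)$ as an average of exterior values, so $|u_1(x)|\le\|u_1\|_{L^\infty(\R^N\setminus\Omega)}$ for a.e. $x\in\Omega$; combining with $\|u_1\|_{L^\infty(\R^N\setminus\Omega)}\le M_2$ from Corollary \ref{KS} gives $\|u_1\|_{L^\infty(\R^N)}\le M_2$. The only step that is not completely routine is the finiteness of $a(x)$ and $b(x)$, which is what makes it legitimate to rewrite $\mathcal{N}_su_1(x)$ as $u_1(x)a(x)-b(x)$; this is precisely where one uses that $\Omega$ is bounded and open, that $N+2s>N$, and that $u_1$ is already known to be bounded outside $\Omega$. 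Everything else is elementary.
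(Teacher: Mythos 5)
Your proof is correct and follows essentially the same route as the paper's: both use the nonlocal Neumann condition $\mathcal{N}_s u_1 = 0$ in $\Omega$ to express $u_1(x)$, for $x\in\Omega$, as the weighted average $\bigl(\int_{\R^N\setminus\Omega}|x-y|^{-(N+2s)}dy\bigr)^{-1}\int_{\R^N\setminus\Omega}u_1(y)|x-y|^{-(N+2s)}dy$, and then bound this average by $\|u_1\|_{L^\infty(\R^N\setminus\Omega)}\le M_2$. Your write-up adds a little more justification than the paper does (deriving $\mathcal{N}_s u_1=0$ a.e.\ from the weak formulation by testing with $v\in C_c^\infty(\Omega)$, and checking finiteness of the two integrals $a(x),b(x)$), but the argument is the same.
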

\begin{proof} By (\ref{I02}),
$$
\mathcal{N}_su_1(x) = C_{N,s} \int_{\R^N \setminus \Omega} \frac{u_1(x) - u_1(y)}{|x-y|^{N+2s}}dy,\;\;x\in {\Omega}.	
$$
Since $u_1$ is a solution of $(P)$, we have $\mathcal{N}_su_1(x)=0$ for all $ x\in \Omega$, hence
$$
u_1(x)=\frac{\int_{\R^N \setminus \Omega} \frac{u_1(y)}{|x-y|^{N+2s}}dy}{\int_{\R^N \setminus \Omega} \frac{1}{|x-y|^{N+2s}}dy}, \quad \forall x \in \Omega. 
$$
From this, 
$$
0 \leq u_1(x) \leq \|u_1\|_{L^{\infty}(\mathbb{R}^N \setminus \Omega)}, \quad \forall x \in \Omega.
$$
Recalling that $\partial \Omega$ has Lebesgue's measure zero, we can conclude that $u_1 \in L^{\infty}(\mathbb{R}^N)$ and $\|u_1\|_{L^{\infty}(\R^{N})}\leq M_2$.
\end{proof}

Our next goal is to show that $u_1(x) \to 0$ as $|x| \to +\infty$. Nevertheless, in order to prove this, we will firstly study some properties of the solution of the following linear problem. 
\begin{equation}\label{A07}
\left\{\begin{array}{l} 
(-\Delta)^sv + v = g(x)\;\;\mbox{in}\;\;\R^N,\\
v\in H^s(\R^N),
\end{array}
\right.
\end{equation}
where 
$$
g(x) = Q(x)\hat{u}^p(x)
$$ 
and 
$$
\hat{u}(x) = \begin{cases}
u_1(x),&x\in \R^N \setminus \Omega\\
0,&x\in \Omega.
\end{cases}
$$
Note that $g\in L^2(\R^N)$, because 
$$
\begin{aligned}
\int_{\R^N} |g(x)|^2dx &= \int_{\R^N} Q^2(x) \hat{u}^{2p}(x)dx \leq \|Q\|_{\infty} \|u_1\|_{L^\infty(\R^N \setminus \Omega)}^{2p-2}\int_{\R^N\setminus \Omega} |u_1|^2dx < +\infty.
\end{aligned}
$$
Consequently, by Riesz's Theorem,  problem (\ref{A07}) has a unique weak solution $v\in H^s(\R^N)$, which is given by   
\begin{equation}\label{A08}
v(x) = (\mathcal{K} * g)(x) = \int_{\R^N} \mathcal{K}(x-\xi)g(\xi)d\xi,
\end{equation} 
where $\mathcal{K}$ is the Bessel kernel
\begin{equation}\label{A09}
\mathcal{K}(x) = \mathcal{F}^{-1}\left( \frac{1}{1+|\xi|^{2\alpha}}\right)(x),
\end{equation}
The function $K$ satisfies the following properties: 
\begin{itemize}
\item[$(K_1)$] $\mathcal{K} $ is positive, radially symmetric and smooth in $\mathbb{R}^{N} \setminus \{0\}$, \\
\item[$(K_2)$] There is $C>0$ such that  
$$
\mathcal{K}(x) \leq \frac{C}{|x|^{N+2s}}, \quad \forall x \in \mathbb{R}^{N} \setminus \{0\} 
$$
\item[$(K_3)$] There is a constant $C$ such that 
$$
\nabla \mathcal{K}(x) \leq \frac{C}{|x|^{N+1+2s}}\;\;\mbox{if}\;\;|x|\geq 1.
$$ 
\item[$(K_4)$] $\mathcal{K} \in L^{q}(\mathbb{R}^{N}), \quad \forall q \in [1,N/N-2s)$. 
\end{itemize}
For more details see \cite{PFAQJT}. Since $u(x)\geq 0$ for all $x\in \R^N\setminus \Omega$, $u \not \equiv 0$ and  $\mathcal{K}$ is positive, then $v(x)>0$ for all $x \in \mathbb{R}^N$. 

By using the information above, we are able to prove the following result
\begin{lemma}\label{continuity}
 The function $v$ is continuous, that is, $v\in C(\R^N)$.
 \end{lemma}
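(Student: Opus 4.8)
The plan is to use the explicit representation \eqref{A08}, $v=\mathcal{K}*g$, together with the integrability properties $(K_2)$ and $(K_4)$ of the Bessel kernel and the classical fact that the convolution of two functions lying in mutually conjugate Lebesgue spaces is (uniformly) continuous.

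First I would record the integrability of $g$. Since $Q$ is continuous and bounded and, by Lemma \ref{Rlm01} together with the corollaries above, $\hat{u}\in L^{\infty}(\R^N)$, we get $|g(x)|\le \|Q\|_{\infty}\,\|u_1\|_{L^{\infty}(\R^N\setminus\Omega)}^{\,p}$, so $g\in L^{\infty}(\R^N)$; and $g\in L^2(\R^N)$ was already checked before \eqref{A08}. By interpolation, $g\in L^{r}(\R^N)$ for every $r\in[2,\infty]$. Next, choose an exponent $q\in\bigl(1,\min\{2,\tfrac{N}{N-2s}\}\bigr)$, an interval which is nonempty because $\tfrac{N}{N-2s}>1$. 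By $(K_4)$ we have $\mathcal{K}\in L^{q}(\R^N)$, while its conjugate exponent $q'=\tfrac{q}{q-1}$ satisfies $q'\ge 2$, hence $g\in L^{q'}(\R^N)$. Consequently, by Hölder's inequality, for each $x\in\R^N$ the map $\xi\mapsto \mathcal{K}(x-\xi)g(\xi)$ is integrable with
\[
|v(x)|\le \|\mathcal{K}\|_{L^{q}(\R^N)}\,\|g\|_{L^{q'}(\R^N)},
\]
so the formula \eqref{A08} defines $v$ at every point of $\R^N$ and $v\in L^{\infty}(\R^N)$.

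Finally, I would establish continuity via the continuity of translations in $L^{q}(\R^N)$. Writing $\tau_h\mathcal{K}(y)=\mathcal{K}(y-h)$ for $h\in\R^N$, formula \eqref{A08} and Hölder's inequality give
\[
|v(x+h)-v(x)|\le \int_{\R^N}\bigl|\mathcal{K}(x+h-\xi)-\mathcal{K}(x-\xi)\bigr|\,|g(\xi)|\,d\xi\le \|\tau_h\mathcal{K}-\mathcal{K}\|_{L^{q}(\R^N)}\,\|g\|_{L^{q'}(\R^N)}.
\]
Since $1\le q<\infty$, the right-hand side tends to $0$ as $|h|\to 0$, uniformly in $x$; thus $v$ is uniformly continuous on $\R^N$, and in particular $v\in C(\R^N)$. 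The only points requiring care are the choice of the conjugate pair $(q,q')$ — forced by the fact that near the origin $\mathcal{K}$ is only known to lie in $L^{q}$ for $q<\tfrac{N}{N-2s}$, which is why we first upgraded $g$ to $L^{\infty}$ — and the observation that the weak solution of \eqref{A07} must be identified with the continuous representative produced by \eqref{A08}; everything else is routine.
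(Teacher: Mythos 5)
Your proof is correct, and it takes a genuinely different route from the paper's. The paper estimates $v(x)-v(x_0)$ by splitting $\R^N$ into a far region $B^c(0,T)$ (handled with the gradient bound $(K_3)$ and the mean value theorem), a small ball $B(x_0,\delta)$ near the singularity (handled with $(K_4)$ and H\"older), and the intermediate annulus $B(0,T)\setminus B(x_0,\delta)$ where $\mathcal{K}$ is uniformly continuous. You instead first upgrade $g$ to $L^\infty(\R^N)$ (legitimate, via Lemma \ref{Rlm01} and the corollary giving $u_1\in L^\infty$), so that by interpolation $g\in L^{q'}(\R^N)$ for a conjugate exponent $q'\ge 2$; picking $q\in\bigl(1,\min\{2,\tfrac{N}{N-2s}\}\bigr)$ makes $\mathcal{K}\in L^q$ by $(K_4)$, and then $v=\mathcal{K}*g$ is uniformly continuous by the classical fact that convolution of conjugate $L^q$ and $L^{q'}$ functions is uniformly continuous, using continuity of translations in $L^q$ for $q<\infty$. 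Your argument is shorter, avoids $(K_3)$ entirely, and actually yields the stronger conclusion of uniform continuity on all of $\R^N$; the paper's argument is more elementary and self-contained (it does not invoke continuity of translations) at the cost of a three-way decomposition. Your parenthetical point about identifying the weak $H^s$-solution with the continuous representative given by \eqref{A08} is appropriate and matches how the paper implicitly treats $v$.
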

\begin{proof}
Let $\delta >0$, $x_0\in \R^N$ and $T > |x_0|+2 \delta$. For any $x\in B(x_0, \delta)$, 
$$
\begin{aligned}
|v(x) - v(x_0)| &= \left|\int_{\R^N} \mathcal{K}(x-\xi)g(\xi)d\xi - \int_{\R^N} \mathcal{K}(x_0 - \xi)g(\xi)d\xi\right|\\
&\leq \int_{\R^N} |\mathcal{K}(x-\xi) - \mathcal{K}(x_0-\xi)||g(\xi)|d\xi \\
&= \int_{B(0,T)} |\mathcal{K}(x-\xi) - \mathcal{K}(x_0-\xi)||g(\xi)|d\xi + \int_{B^c(0,T)} |\mathcal{K}(x-\xi) - \mathcal{K}(x_0-\xi)||g(\xi)|d\xi.
\end{aligned}
$$
Note that, by H\"older inequality,  
$$
\begin{aligned}
\int_{B^c(0,T)} |\mathcal{K}(x-\xi) - \mathcal{K}(x_0-\xi)||g(\xi)|d\xi & \leq \left( \int_{B^c(0,T)} |\mathcal{K}(x-\xi)- \mathcal{K}(x_0-\xi)|^2d\xi\right)^{1/2} \left( \int_{B^c(0, T)} |g(\xi)|^2d\xi\right)^{1/2}.
\end{aligned}
$$ 
Since $\mathcal{K}$ is smooth, there exists $C>0$ 
$$
\begin{aligned}
|\mathcal{K}(x - \xi) - \mathcal{K}(x_0 -\xi)| &\leq |\nabla \mathcal{K} (x_0-\xi + \theta (x-x_0))||x-x_0|\\
&\leq C \frac{1}{|x_0-\xi + \theta(x-x_0)|^{N+1+2s}} |x-x_0|\\
&\leq C 2^{N+1+2s} \frac{|x-x_0|}{|\xi|^{N+1+2s}}.
\end{aligned}
$$ 
Then
$$
\begin{aligned}
\int_{B^c(0,T)} |\mathcal{K}(x-\xi) - \mathcal{K}(x_0 - \xi)|^2 d\xi &\leq \tilde{C} |x-x_0|^2 \int_{B^c(0,T)} \frac{d\xi}{|\xi|^{2(N+1+2s)}}\\
&\leq \tilde{C} \delta^2 \int_{T}^{+\infty} \frac{r^{N-1}}{r^{2(N+1+2s)}}dr\\
& = \tilde{C}\delta^2 \frac{1}{T^{2(N+1+2s) - N}}
\end{aligned}
$$
and
\begin{equation*}
\int_{B^c(0,T)} |\mathcal{K}(x-\xi) - \mathcal{K}(x_0-\xi)||g(\xi)|d\xi \leq  \tilde{C} \frac{\delta}{T^{\frac{N}{2}+ 1+2s}} \left( \int_{\R^N} |g(\xi)|^2d\xi\right)^{1/2}.
\end{equation*}
Therefore, given $\epsilon$, we can fix $\delta$ small enough such that
\begin{equation}\label{A10}
\int_{B^c(0,T)} |\mathcal{K}(x-\xi) - \mathcal{K}(x_0-\xi)||g(\xi)|d\xi < \frac{\epsilon}{3}.
\end{equation}
On the other hand, fixing $q \in (1,N/N-2s)$, $q'=\frac{q}{q-1}$ and using $(K_4)$, we obtain by  H\"older inequality  
\begin{equation*}
 \int_{B(x_0,\delta)} |\mathcal{K}(x-\xi) - \mathcal{K}(x_0-\xi)||g(\xi)|d\xi \leq C\left(\int_{B(x_0,\delta)} |g(\xi)|^{q'}\,d\xi\right)^{\frac{1}{q'}}.  
\end{equation*}
From this, we can fix $\delta>0$ small enough such that 
\begin{equation}\label{A11}
\int_{B(x_0,\delta)} |\mathcal{K}(x-\xi) - \mathcal{K}(x_0-\xi)||g(\xi)|d\xi < \frac{\epsilon}{3}.  
\end{equation}
Finally, as $f(x,\xi)=\mathcal{K}(x-\xi)-\mathcal{K}(x_0-\xi)$ is a continuous function in $Y=B(x_0,\delta/2) \times (B(0,T) \setminus B(x_0,\delta))$,  it follows that 
$$
\lim_{x \to x_0}\sup_{(x,\xi) \in Y}|\mathcal{K}(x-\xi) - \mathcal{K}(x_0-\xi)|=0,
$$
implying that
\begin{equation}\label{A111}
\int_{B(0,T) \setminus B(x_0,\delta)} |\mathcal{K}(x-\xi) - \mathcal{K}(x_0-\xi)||g(\xi)|d\xi < \frac{\epsilon}{3},  
\end{equation}
when $|x-x_0|$ is smaller enough. Now, the lemma follows from (\ref{A10})-(\ref{A111}).

\end{proof}

Our next lemma studies the behavior of $v$ at infinity. In this proof, we use some arguments developed in Alves and Miyagaki \cite[Lemma 2.6]{CAOM}.   
\begin{lemma} \label{ZERO}
$$	
v(x) \to 0 \quad \mbox{as} \quad |x| \to +\infty.
$$
\end{lemma}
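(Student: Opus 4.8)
The plan is to exploit the convolution representation $v = \mathcal{K} * g$ from \eqref{A08}, together with the decay estimates on the Bessel kernel $(K_2)$--$(K_4)$ and the known decay of $g(x) = Q(x)\hat{u}^p(x)$. Since $Q$ is bounded and, by \eqref{decaimento} combined with Lemma \ref{Rlm01}, $\hat{u}$ decays at infinity (indeed $\hat u \in L^\infty(\R^N)$ and $\hat u \to 0$ outside a large ball, because $u_1 \in L^2(\R^N\setminus\Omega)$ and the argument will show $u_1$ itself vanishes at infinity only after this lemma — so here I will only use $g \in L^2(\R^N)$ and $g \in L^\infty(\R^N)$ with $g$ small in $L^2$ of the complement of a large ball), the strategy is a standard splitting of the convolution integral into a piece near $x$ and a piece far from $x$.

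First I would fix $\epsilon > 0$ and write, for $|x|$ large and $R_1 > 0$ to be chosen,
$$
v(x) = \int_{B(x,R_1)} \mathcal{K}(x-\xi) g(\xi)\, d\xi + \int_{B^c(x,R_1)} \mathcal{K}(x-\xi) g(\xi)\, d\xi.
$$
For the second integral, by $(K_2)$ we have $\mathcal{K}(x-\xi) \le C|x-\xi|^{-(N+2s)} \le C R_1^{-(N+2s)}$ on $B^c(x,R_1)$, and then by Hölder,
$$
\left| \int_{B^c(x,R_1)} \mathcal{K}(x-\xi) g(\xi)\, d\xi \right| \le \frac{C}{R_1^{N+2s}} \int_{\R^N} |g(\xi)|\, d\xi \le \frac{C'}{R_1^{N+2s}},
$$
using $g \in L^1(\R^N)$ (which follows from $g \in L^2$ and $g$ having integrable tails, or more simply from $Q$ bounded and $\hat u \in L^2 \cap L^\infty$ so $\hat u^p \in L^1$ when $p \ge 1$ — I would spell out whichever integrability is cleanest). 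Choosing $R_1$ large makes this term smaller than $\epsilon/2$, \emph{uniformly in $x$}.

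For the first integral, I would use $(K_4)$: fix $q \in [1, N/(N-2s))$ and let $q'$ be its conjugate exponent. Then by Hölder,
$$
\left| \int_{B(x,R_1)} \mathcal{K}(x-\xi) g(\xi)\, d\xi \right| \le \|\mathcal{K}\|_{L^q(\R^N)} \left( \int_{B(x,R_1)} |g(\xi)|^{q'}\, d\xi \right)^{1/q'}.
$$
Now $g \in L^\infty(\R^N)$ gives $|g|^{q'} \le \|g\|_\infty^{q'-1} |g|$, so $\int_{B(x,R_1)} |g(\xi)|^{q'}\, d\xi \le \|g\|_\infty^{q'-1} \int_{B(x,R_1)} |g(\xi)|\, d\xi$, and since $g \in L^1(\R^N)$, the tail $\int_{B(x,R_1)} |g| \le \int_{B^c(0,|x|-R_1)} |g| \to 0$ as $|x| \to \infty$ (with $R_1$ now fixed). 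Hence this term is smaller than $\epsilon/2$ once $|x|$ is large enough, and combining the two bounds gives $|v(x)| < \epsilon$ for $|x|$ large, which is the claim.

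The main obstacle is organizing the integrability bookkeeping for $g$ so that both pieces are controlled: one needs $g \in L^1$ (for the far piece to be uniformly small in $x$) and $g$ to have vanishing tails in $L^{q'}$ (for the near piece), and the cleanest route is to observe $g \in L^1 \cap L^\infty$ from $Q \in L^\infty$, $u_1 \in L^\infty(\R^N\setminus\Omega)$ (Lemma \ref{Rlm01} / its corollary) and $u_1 \in L^2(\R^N\setminus\Omega)$, which yields $\hat u^p \in L^{2/p} \cap L^\infty \subset L^1 \cap L^\infty$ whenever $p \le 2$, and for $p \in (2, 2^*_s/2)$ one still has $\hat u^p \in L^1$ by interpolation since $\hat u \in L^2 \cap L^{2^*_s}$ and $p < 2^*_s$. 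I would state this integrability as a short preliminary observation and then the two-piece estimate above closes the proof.
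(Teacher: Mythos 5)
Your decomposition — far piece controlled by the kernel decay, near piece controlled by the vanishing tails of $g$ — is the same strategy the paper uses. But the integrability bookkeeping has a genuine gap. You rest both halves of the argument on $g \in L^1(\R^N)$, and your justification of that is wrong for part of the admissible range of $p$. The inclusion $L^{2/p}\cap L^\infty \subset L^1$ on $\R^N$ requires $2/p \le 1$, i.e.\ $p \ge 2$, not $p \le 2$ as you wrote (on an unbounded domain $L^a \cap L^\infty \subset L^b$ only for $b \ge a$). At this point in the paper the only integrability available for $u_1$ is $u_1 \in L^2 \cap L^\infty(\R^N\setminus\Omega)$, hence $u_1 \in L^r$ for $r \in [2,\infty]$; nothing gives $u_1 \in L^p$ for $p\in(1,2)$, so $\hat u^p \in L^1$, and hence $g\in L^1$, is simply not established in that range. (Decay of $u_1$ at infinity, which would resolve this, is proved \emph{after} this lemma, precisely using it.)

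The fix — and this is what the paper does — is to avoid $g\in L^1$ entirely. For the far piece, put the $L^\infty$ on $g$ and the integrability on $\mathcal{K}$: by $(K_2)$,
\begin{equation*}
\int_{B^c(x,R_1)} \mathcal{K}(x-\xi)|g(\xi)|\,d\xi \le \|g\|_{L^\infty}\int_{|z|\ge R_1}\mathcal{K}(z)\,dz \le C\|g\|_{L^\infty} R_1^{-2s},
\end{equation*}
which is small uniformly in $x$ once $R_1$ is large, and $\|g\|_{L^\infty}\le \|Q\|_\infty\|u_1\|_\infty^p<\infty$ needs nothing about $p$. For the near piece your H\"older step with $(K_4)$ is fine, but instead of converting $|g|^{q'}$ to $|g|$ (which again reaches for $g\in L^1$), bound $|g|^{q'}\le \|Q\|_\infty^{q'}\hat u^{pq'}$ and note that since $q$ may be taken arbitrarily close to $1$, the conjugate $q'$ can be taken large enough that $pq'\ge 2$, so $u_1 \in L^{pq'}(\R^N\setminus\Omega)$ and $\int_{B(x,R_1)}\hat u^{pq'}\to 0$ as $|x|\to\infty$. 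With these two substitutions your argument closes for all $p\in(1,\tfrac{N+2s}{N-2s})$.
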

\begin{proof}
Given $\delta >0$, consider the sets
$$
A_\delta=\{y \in \mathbb{R}^{N}\,:\,  |y-x|\geq 1/\delta  \}
$$
and
$$
B_\delta=\{y \in \mathbb{R}^{N}\,:\,  |y-x|<1/\delta  \}.
$$
Hence, 
$$
\begin{aligned}
0\leq v(x) &=  \int_{\mathbb{R}^{N}}\mathcal{K}(x-y)Q(y)\hat{u}(y)^{p}dy\\
& = \int_{A_\delta}\mathcal{K}(x-y)Q(y)\hat{u}^{p}dy+\int_{B_\delta}\mathcal{K}(x-y)Q(y)\hat{u}^{p}dy.
\end{aligned}
$$
From definition of $A_\delta$ and $(K_2)$, 
\begin{equation} \label{A12}
\begin{aligned}
\int_{A_\delta}\mathcal{K}(x-y)Q(y)\hat{u}^{p}dy &\leq \|Q\|_{\infty}\|u_1\|^p_\infty\int_{A_\delta}\mathcal{K}(x-y)dy \\
&\leq \|Q\|_{\infty}C\int_{|x-y|\geq \frac{1}{\delta}}\frac{dy}{|x-y|^{N+2s}}=C_1\delta^{2s}.
\end{aligned}
\end{equation}
Fixing $q \in (1,N/N-2s)$, $q'=\frac{q}{q-1}$ and using $(K_4)$, we obtain by  H\"older inequality   
$$
\begin{aligned}
\int_{B_\delta}\mathcal{K}(x-y)Q(y)\tilde{u}^{p}dy&\leq \int_{B_\delta} \mathcal{K}(x-y)Q(y)|u_1(y)|^pdy\\
&\leq K \left( \int_{B_\delta} \mathcal{K}^q(x-y)dx\right)^{1/q} \left( \int_{B_\delta} |u_1(y)|^{2q'}dy\right)^{1/q'}.
\end{aligned}
$$
As $u_1\in L^{2q'}(\R^N \setminus \Omega)$, 
$$
\|u_1\|_{L^{2q'}(B_\delta)} \to 0\;\;\mbox{as}\;\;|x|\to +\infty.
$$
Therefore, there are  $T>0$ such that
\begin{equation} \label{A13}
\int_{B_\delta}\mathcal{K}(x-y)Q(y)\hat{u}^{p}dy\leq \delta, \quad \forall  |x|\geq T.
\end{equation}
From (\ref{A12}) and (\ref{A13}),
\begin{equation} \label{A14}
\int_{\mathbb{R}^{N}}\mathcal{K}(x-y)Q(y)\hat{u}^{p} dy\leq C_1\delta^{2s}+\delta, \quad \forall |x|\geq T.
\end{equation}
Since $\delta$ is arbitrary, the proof is finished. 
\end{proof}

Next lemma establishes the behavior of $u_1$ at infinite.  
\begin{lemma}\label{decay}
$$
u_1(x)\to 0\;\;\mbox{as}\;\;|x|\to \infty.
$$
\end{lemma}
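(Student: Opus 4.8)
The plan is to localize $u_1$ near infinity so as to turn it into a genuine element of $H^{s}(\R^{N})$ solving a linear equation of the type (\ref{A07}), and then to repeat the argument of Lemma \ref{ZERO} on the localized function. Fix $R_{1}>D+1$ with $\overline{\Omega}\subset B(0,R_{1}-1)$ and take $\zeta\in C^{\infty}(\R^{N})$ with $0\le\zeta\le1$, $\zeta\equiv0$ on a neighbourhood of $\overline{\Omega}$ and $\zeta\equiv1$ on $\{|x|\ge R_{1}\}$, so that $1-\zeta\in C^{\infty}_{0}(\R^{N})$. Set $\tilde{w}=\zeta u_{1}$. Since $\mathrm{supp}\,\zeta\subset\R^{N}\setminus\Omega$ and $u_{1}\in H^{s}_{\tilde{\Omega}}$, the same product-with-cutoff estimate used for $v_{n}=\varphi u_{n}$ in the proof of Proposition \ref{GC-C} (see \cite[Lemma 5.3]{EDNGPEV}) gives $\tilde{w}\in H^{s}(\R^{N})$.

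Next I would compute $G:=(-\Delta)^{s}\tilde{w}+\tilde{w}$ by means of the product identity
$$
(-\Delta)^{s}(\zeta u_{1})=\zeta(-\Delta)^{s}u_{1}+u_{1}(-\Delta)^{s}\zeta-\mathcal{I},\qquad \mathcal{I}(x)=\int_{\R^{N}}\frac{(\zeta(x)-\zeta(y))(u_{1}(x)-u_{1}(y))}{|x-y|^{N+2s}}dy,
$$
together with the fact that $(-\Delta)^{s}u_{1}+u_{1}=Q\,u_{1}^{p}$ on $\mathrm{supp}\,\zeta\subset\R^{N}\setminus\Omega$, which yields $G=\zeta\,Q\,u_{1}^{p}+u_{1}(-\Delta)^{s}\zeta-\mathcal{I}$. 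The claim is then $G\in L^{2}(\R^{N})\cap L^{\infty}(\R^{N})$. By Lemma \ref{Rlm01} and the corollaries following it, $u_{1}\in L^{\infty}(\R^{N})$ and $u_{1}\in L^{2}(\R^{N}\setminus\Omega)$, so $u_{1}^{p}\le\|u_{1}\|_{\infty}^{p-1}u_{1}$ and hence $\zeta Qu_{1}^{p}\in L^{2}\cap L^{\infty}$; since $1-\zeta\in C^{\infty}_{0}$ we have $(-\Delta)^{s}\zeta=-(-\Delta)^{s}(1-\zeta)\in C^{\infty}$ with $|(-\Delta)^{s}\zeta(x)|\le C|x|^{-N-2s}$ at infinity, whence $u_{1}(-\Delta)^{s}\zeta\in L^{2}\cap L^{\infty}$. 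For $\mathcal{I}$ I would split the $y$-integral at $|x-y|=1$: on $|x-y|<1$ the estimate $|\zeta(x)-\zeta(y)|\le\|\nabla\zeta\|_{\infty}|x-y|$ together with $s<1$ gives $\int_{|x-y|<1}|u_{1}(x)-u_{1}(y)|\,|x-y|^{1-N-2s}dy\le C\|u_{1}\|_{\infty}$, and on $|x-y|\ge1$ one has $\int_{|x-y|\ge1}|u_{1}(x)-u_{1}(y)|\,|x-y|^{-N-2s}dy\le C\|u_{1}\|_{\infty}$, so $\mathcal{I}\in L^{\infty}$; moreover for $|x|\ge2R_{1}$ one has $\zeta(x)-\zeta(y)=1-\zeta(y)$, supported in $\{|y|<R_{1}\}$, so that $|\mathcal{I}(x)|\le C|x|^{-N-2s}$ and therefore $\mathcal{I}\in L^{2}(\R^{N})$ as well.

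Finally, since $\tilde{w}\in H^{s}(\R^{N})$ solves $(-\Delta)^{s}\tilde{w}+\tilde{w}=G$ with $G\in L^{2}(\R^{N})$, Riesz's Theorem (exactly as in the derivation of (\ref{A08})) gives $\tilde{w}=\mathcal{K}*G$, where $\mathcal{K}$ is the Bessel kernel of (\ref{A09}). Fixing $q\in(1,\frac{N}{N-2s})$ with $q<2$, so that $q'\ge2$ and $G\in L^{q'}(\R^{N})$, one repeats the splitting in the proof of Lemma \ref{ZERO}: for each $\delta>0$ the part of $\mathcal{K}*G(x)$ over $\{|x-y|\ge1/\delta\}$ is, by $(K_{2})$, at most $C\|G\|_{\infty}\delta^{2s}$, while the part over $\{|x-y|<1/\delta\}$ is, by $(K_{4})$ and H\"older's inequality, at most $\|\mathcal{K}\|_{L^{q}(B(0,1/\delta))}\,\|G\|_{L^{q'}(B(x,1/\delta))}\to0$ as $|x|\to\infty$. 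Hence $\limsup_{|x|\to\infty}|\tilde{w}(x)|\le C\|G\|_{\infty}\delta^{2s}$ for every $\delta>0$, so $\tilde{w}(x)\to0$; and since $\tilde{w}=u_{1}$ on $\{|x|\ge R_{1}\}$ this proves $u_{1}(x)\to0$ as $|x|\to\infty$. The step I expect to be the main obstacle is the control of the commutator term $\mathcal{I}$ (and, relatedly, checking rigorously that $\tilde{w}\in H^{s}(\R^{N})$ and that the product identity holds in the weak sense); once $G$ is known to lie in $L^{2}\cap L^{\infty}$, the remainder is a routine repetition of the estimates already carried out for (\ref{A07}) and in Lemma \ref{ZERO}.
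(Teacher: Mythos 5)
Your proposal is a genuinely different route from the paper's. The paper never touches the nonlocal operator applied to a product: it builds an auxiliary comparison function $V=Cv$, where $v=\mathcal{K}*g$ solves a full-space linear problem with right-hand side $g=Q\hat{u}^p$, and then shows $u_1\le V$ outside a ball by testing the two \emph{weak} formulations against $\varphi=(u_1-V)^+$; the Kato-type sign of the integrand does all the work, with no regularity of $u_1$ beyond $L^\infty$ required. Your approach instead writes $u_1$ itself (cut off near $\Omega$) as $\mathcal{K}*G$, at the price of having to compute $(-\Delta)^s(\zeta u_1)$ via the product rule and controlling the commutator $\mathcal{I}$.

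That commutator control is where the argument, as written, breaks. You estimate the near-diagonal piece by
$$
\int_{|x-y|<1}|u_1(x)-u_1(y)|\,|x-y|^{1-N-2s}\,dy\;\le\;C\|u_1\|_\infty,
$$
invoking only $s<1$. But in polar coordinates $\int_{|x-y|<1}|x-y|^{1-N-2s}\,dy=\omega_{N-1}\int_0^1 r^{-2s}\,dr$, which diverges for all $s\ge 1/2$. So with only $u_1\in L^\infty$ in hand the bound fails in half of the admissible range of $s$. To rescue it you need $u_1\in C^\alpha_{loc}(\R^N\setminus\overline{\Omega})$ with $\alpha>2s-1$, which is plausible by interior regularity for the fractional Laplacian with bounded right-hand side but is nowhere established in the paper (which only proves the $L^\infty$ bound), nor in your sketch. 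The same regularity is needed for the other step you flag: justifying the pointwise identity $(-\Delta)^s u_1+u_1=Qu_1^p$ on $\mathrm{supp}\,\zeta$ and hence the identity for $G$, since the ground state is a priori only a weak solution in $H^s_{\tilde\Omega}$. These are not merely cosmetic: they are precisely the technicalities the paper's comparison argument is designed to sidestep. Once $G\in L^2\cap L^\infty$ were secured, the rest of your argument (splitting $\mathcal{K}*G$ at $|x-y|=1/\delta$ as in Lemma~\ref{ZERO}) is sound, and even slightly shorter than the paper's since you skip the barrier step; but the route requires a regularity input that the paper's does not.
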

\begin{proof}
Let $v$ the positive solution of the linear problem (\ref{A07}) and $T>0$ such that 
$$
\Omega \subset B(0,T).
$$ 
Then there exists $C \gg 1$ such that 
$$
V(x) = Cv(x) \geq 1+\|u_1\|_{L^\infty(\R^N)}, \quad \mbox{for} \quad |x| \leq T.
$$
Moreover, $V$ is solution of the problem 
\begin{equation}\label{A15} 
(-\Delta)^s V + V = CQ(x)\hat{u}^p(x)\;\;\mbox{in}\;\;\R^N
\end{equation} 
and 
\begin{equation}\label{A16}
V(x) \to 0\;\;\mbox{as}\;\;|x| \to \infty.
\end{equation}
Let 
$$
\varphi (x) = \begin{cases}
(u_1-V)^+(x),&x\in B^c(0,T)\\
0,&x\in B(0,T).
\end{cases}
$$
We claim that 
\begin{equation}\label{A17}
\varphi \equiv 0.
\end{equation}
Assume for a moment that (\ref{A17}) is true. Then,  
$$
u_1(x)\leq V(x)\;\;\mbox{a.e.}\;\;x\in \R^N \setminus B(0,T).
$$
This combined with (\ref{A16}) gives  
\begin{equation}\label{A18}
u_1(x) \to 0\;\;\mbox{as}\;\;|x|\to \infty,
\end{equation}
as asserted.

\noindent 
{\bf Proof of the claim.} Since $\varphi \in H^{s}(\R^N)$ and $u_1$ is solution of problem $(P)$, 
\begin{equation}\label{A19}
\frac{1}{2}\iint_{\R^{2N} \setminus \Omega^2}\frac{[u_1(x)-u_1(y)][\varphi (x) - \varphi (y)]}{|x-y|^{N+2s}}dy dx + \int_{\R^N \setminus \Omega} u_1(x)\varphi (x)dx = \int_{\R^N \setminus \Omega} Q(x)u_1^p(x)\varphi (x)dx.
\end{equation}
Moreover as $V$ is solution of (\ref{A15}), 
$$
\frac{1}{2}\iint_{\R^{2N}} \frac{[V(x) - V(y)][\varphi (x) - \varphi (y)]}{|x-y|^{N+2s}}dy dx + \int_{\R^N} V(x)\varphi (x)dx = \int_{\R^N} CQ(x)\hat{u}^p(x)\varphi (x)dx.
$$
Since
$$
 \int_{\R^N} CQ(x)\hat{u}^p(x)\varphi (x)dx =\int_{\R^N \setminus \Omega} CQ(x)\hat{u}^p(x)\varphi (x)dx= \int_{\R^N \setminus \Omega} CQ(x)u_1^p(x)\varphi (x)dx,
$$
it follows that
\begin{equation}\label{A20}
\frac{1}{2}\iint_{\R^{2N}} \frac{[V(x) - V(y)][\varphi (x) - \varphi (y)]}{|x-y|^{N+2s}}dy dx + \int_{\R^N} V(x)\varphi (x)dx = \int_{\R^N \setminus \Omega} CQ(x)u_1^p(x)\varphi (x)dx.
\end{equation}
Now by subtracting (\ref{A19}) with (\ref{A20}), we find 
$$
\frac{1}{2}\iint_{\R^{2N}\setminus B^2(0,T)}\frac{[(u_1-V)(x) - (u_1-V)(y)][\varphi (x) - \varphi (y)]}{|x-y|^{N+2s}}dy dx + \int_{\R^N \setminus B(0,T)} (u_1-V)(x)\varphi (x)dx \leq 0.
$$
Using the fact that $V(x) \geq u_1(x)$ for $x \in B(0,T)$, it is easy to check that 
$$
[(u_1-V)(x) - (u_1-V)(y)][\varphi (x) - \varphi (y)]\geq 0, \quad (x,y) \in \R^{2N}\setminus B^2(0,T).
$$
Thus, as $\R^{N}\setminus B(0,T))^{2} \subset \R^{2N}\setminus B^2(0,T)$, we get
$$
\frac{1}{2}\iint_{(\R^{N}\setminus B(0,T))^{2}} \frac{|(u_1-V)^{+}(x) - (u_1-V)^+(y)|^2}{|x-y|^{N+2s}}dy dx + \int_{\R^N\setminus B(0,T)} [(u_1-V)^+]^2(x)dx \leq 0,
$$
implying that  $(u_1-V)^+ \equiv 0$. 
\end{proof}

\begin{lemma}\label{decay2}
There exists $C>0$, such that 
$$
0\leq u_1(x) \leq \frac{C}{|x|^{N+2s}}, \quad \forall  x \in \mathbb{R}^N \setminus \{0\}.
$$
\end{lemma}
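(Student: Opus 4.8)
The plan is to reduce the claim to a decay estimate for the auxiliary function $v$ of Lemmas~\ref{continuity}--\ref{ZERO} and then to bootstrap the rate of polynomial decay. Recall that $u_1\ge 0$, $u_1\in L^{\infty}(\mathbb R^{N})$, and that, from the proof of Lemma~\ref{decay}, after fixing $T$ with $\Omega\subset B(0,T)$ one has $0\le u_1(x)\le V(x)=Cv(x)$ for a.e.\ $|x|>T$, where $v=\mathcal K*g$, $g=Q\hat u^{p}\ge 0$, $g\in L^{2}(\mathbb R^{N})\cap L^{\infty}(\mathbb R^{N})$, and $g(x)\to 0$ as $|x|\to\infty$ (because $u_1(x)\to0$). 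Since $u_1\in L^{\infty}(\mathbb R^{N})$, on the bounded region $\{|x|\le T\}$ we have $u_1(x)\le \|u_1\|_{L^{\infty}}\le \|u_1\|_{L^{\infty}}T^{N+2s}|x|^{-(N+2s)}$, so everything reduces to proving that $v(x)\le C|x|^{-(N+2s)}$ for all $|x|$ large.

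First I would produce \emph{some} algebraic decay for $v$. Writing $g=g\chi_{B(0,T)}+g\chi_{\mathbb R^{N}\setminus B(0,T)}$ and using that for $|y|>T$ one has $g(y)=Q(y)u_1^{p}(y)\le \|Q\|_{\infty}C^{p}v(y)^{p}\le\kappa_T\,v(y)$, with $\kappa_T:=\|Q\|_{\infty}C^{p}\big(\sup_{|z|\ge T}v(z)\big)^{p-1}\to0$ as $T\to\infty$ (here $p>1$ and $v\to0$ are essential), property $(K_2)$ gives, for $|x|\ge 2T$,
$$
v(x)\le \frac{C}{|x|^{N+2s}}\int_{B(0,T)}g\;+\;\kappa_T\,(\mathcal K*v)(x),
$$
and a further split of $(\mathcal K*v)(x)$ over $\{|y|\le|x|/2\}$ and its complement, using $(K_2)$, $v\in H^{s}(\mathbb R^{N})\hookrightarrow L^{2^{*}_{s}}$ and $\|\mathcal K\|_{L^{1}}<\infty$, bounds it by $C|x|^{-(N+2s)/2}+\sup_{|z|\ge|x|/2}v(z)$. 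Setting $\delta(\rho):=\sup_{|x|\ge\rho}v(x)$, this yields $\delta(\rho)\le C\rho^{-(N+2s)/2}+\kappa_T\,\delta(\rho/2)$ for $\rho\ge 2T$; choosing $T$ so large that $\kappa_T\,2^{(N+2s)/2}<1$ and iterating turns this into a convergent geometric series, hence $v(x)\le C_0|x|^{-\alpha_0}$ for $|x|$ large, with some $\alpha_0>0$.

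Next comes the bootstrap. Assume $v(x)\le C_k|x|^{-\alpha_k}$ (hence $u_1(x)\le C'_k|x|^{-\alpha_k}$) for $|x|$ large, with $\alpha_k\in(0,N+2s)$; then $g(y)=Q(y)u_1^{p}(y)\le C|y|^{-p\alpha_k}$ for $|y|$ large, while $g\in L^{\infty}$. Splitting $v(x)=\int_{\mathbb R^{N}}\mathcal K(x-y)g(y)\,dy$ into the pieces $\{|y|\le|x|/2\}$ (where $\mathcal K(x-y)\le C|x|^{-(N+2s)}$ by $(K_2)$, and $\int_{|y|\le|x|/2}g$ is either bounded or $O(|x|^{N-p\alpha_k})$ according to $p\alpha_k\gtrless N$), $\{|x-y|<|x|/2\}$ (where $|y|\ge|x|/2$, so $g(y)\le C|x|^{-p\alpha_k}$ and $\int_{|z|<|x|/2}\mathcal K\le\|\mathcal K\|_{L^{1}}$), and the remaining region (treated again with $(K_2)$), one obtains $v(x)\le C|x|^{-\min(p\alpha_k,\,N+2s)}$ for $|x|$ large. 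Since $p>1$, starting from $\alpha_0>0$ the exponents $\alpha_{k+1}=\min(p\alpha_k,N+2s)$ reach $N+2s$ after finitely many steps, which gives $v(x)\le C|x|^{-(N+2s)}$ for $|x|$ large; combined with the first paragraph this proves the lemma.

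The delicate point is the second paragraph: a priori the decay of $v$ is only qualitative, and the integral inequality it satisfies is self-referential, so no rate can be read off in one pass. The trick is that $p>1$ lets one make the feedback coefficient $\kappa_T$ as small as desired by enlarging $T$, which converts the estimate into a contraction and lets a geometric series close the argument. Once any positive rate $\alpha_0$ is available, the bootstrap of the third paragraph is routine, the only bookkeeping being the distinction between $p\alpha_k<N$, $=N$, $>N$ when estimating $\int_{|y|\le|x|/2}g$, none of which produces a genuine obstruction since the region $\{|x-y|<|x|/2\}$ already contributes the exponent $p\alpha_k$.
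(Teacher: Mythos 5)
Your argument is correct, but it follows a genuinely different route than the paper's. The paper proves Lemma~\ref{decay2} by a barrier (comparison-function) argument: invoking \cite[Lemma 4.3]{PFAQJT} to produce an explicit supersolution $w$ with $(-\Delta)^s w + \tfrac12 w \geq 0$ outside a large ball and $w(x)\leq k_1|x|^{-(N+2s)}$, noting that (by Lemma~\ref{decay}) $u_1(Q u_1^{p-1}-\tfrac12)\leq 0$ outside a large ball, and then testing against $(u_1-w)^+$ to conclude $(u_1-w)^+\equiv 0$. You instead bypass the barrier entirely, staying with the Bessel-kernel representation $v=\mathcal K*g$ already introduced for Lemmas~\ref{continuity}--\ref{ZERO}, and convert the qualitative decay $v\to 0$ into the quantitative rate by a two-stage bootstrap: first exploiting $p>1$ to make the feedback coefficient $\kappa_T$ small and obtain \emph{some} polynomial rate $\alpha_0>0$, then iterating the dyadic decomposition of the convolution to multiply the exponent by $p$ at each step until $N+2s$ is reached. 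Both arguments are legitimate; yours is more self-contained (it does not import the supersolution construction from \cite{PFAQJT}) at the cost of more bookkeeping in the convolution estimates. Two small points worth tightening: in the bootstrap step, the ``remaining region'' $\{|y|>|x|/2,\,|x-y|\geq|x|/2\}$ needs one further split (say $|y|<2|x|$ versus $|y|\geq 2|x|$) when $p\alpha_k\leq N$, since $\int_{|y|>|x|/2}|y|^{-p\alpha_k}dy$ then diverges and the bare $(K_2)$ bound with $|x-y|\geq|x|/2$ is not by itself enough; and in the marginal case $p\alpha_k=N$ one picks up a logarithm in region~1 that should be absorbed by slightly decreasing the exponent before iterating. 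Neither issue changes the conclusion.
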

\begin{proof}
Arguing as in \cite[Lemma 4.3]{PFAQJT}, it is possible to prove that there is a smooth positive function $w$ in $\R^N$ satisfying  
\begin{equation}\label{A21}
(-\Delta)^sw(x) + \frac{1}{2}w(x) \geq  0 \quad \mbox{for}\;\;|x|>T
\end{equation} 
in the classical sense, and 
\begin{equation}\label{A22}
0< w(x) \leq \frac{k_1}{|x|^{N+2s}}, \quad \forall x \in \mathbb{R}^N \setminus \{0\}.
\end{equation}
Note that (\ref{A21}) is equivalent to
\begin{equation}\label{A23}
\frac{1}{2}\iint_{\R^{2N}} \frac{[w(x)-w(y)][\phi(x)-\phi(y)]}{|x-y|^{N+2s}}dy dx + \frac{1}{2}\int_{\R^N} w(x)\phi(x)dx \geq  0,
\end{equation}
for all $\phi \in H^s(\R^N)$ with $\phi \geq 0$ and $supp \phi \subset B^{c}(0,T)$. Without loss of generality, as $\displaystyle \inf_{|x| \leq T}w(x)>0$, we can assume that 
$$
w(x) \geq 1+\|u_1\|_{L^{\infty}(\R^N)} \;\;\mbox{for}\;\;|x| \leq T.
$$
Note that, by (\ref{A18}), there is $T>0$ large enough such that 
\begin{equation}\label{A25}
u_1(x)\left( Q(x)u_1^{p-1}(x) - \frac{1}{2}\right) \leq 0,\;\;\mbox{for}\;\;|x|\geq T.
\end{equation} 
As in the last lemma, considering the function 
$$
\varphi (x) = \begin{cases}
(u_1-w)^+(x),&x\in \R^N \setminus B(0,T)\\
0,&x\in B(0,T).
\end{cases}
$$ 
it follows from  (\ref{A25}), 
\begin{equation}\label{A26}
\frac{1}{2}\iint_{\R^{2N}\setminus B^2(0,T)} \frac{[u_1(x) - u_1(y)][\varphi (x) -\varphi (y)]}{|x-y|^{N+2s}}dy dx + \frac{1}{2}\int_{\R^N \setminus B(0,T)} u_1(x)\varphi (x)dx\leq 0.
\end{equation} 
Therefore, from (\ref{A23}) and (\ref{A26}),   
\begin{equation}\label{A27}
\frac{1}{2}\iint_{\R^{2N}\setminus B^2(0,T)} \frac{[(u_1-w)(x) - (u_1-w)(y)][\varphi (x) - \varphi (y)]}{|x-y|^{N+2s}}dy dx + \frac{1}{2}\int_{\R^N \setminus B(0,T)} (u_1-w)(x)\varphi (x)dx \leq 0.
\end{equation}
Arguing as in Lemma \ref{decay}, 
$$
\frac{1}{2}\iint_{(\R^{N}\setminus B(0,T))^{2}} \frac{|(u_1-w)^+(x) - (u_1-w)^+(y)|^2}{|x-y|^{N+2s}}dy dx + \frac{1}{2}\int_{\R^N \setminus B(0,T)} [(u_1-w)^+(x)]^2dx \leq 0.
$$
that is, $(u_1-w)^+ \equiv 0$. Thereby,   
\begin{equation}\label{A28}
u_1(x) \leq w(x) \leq \frac{k_2}{|x|^{N+2s}}\;\;\mbox{for all}\;\;x\in \R^N \setminus B(0,T).
\end{equation}
Now, the result follows by using the fact that $u_1 \in L^{\infty}(\mathbb{R}^N)$.
\end{proof}

\noindent \textbf{Acknowledgment}: The authors thank the referee for his/her comments that were very important to improve the paper. C.O. Alves was partially supported by CNPq/Brazil 304804/2017-7 and C.E. Torres Ledesma was partially supported by INC Matem\'atica  88887.136371/2017.


\end{document}